\newcommand{\markerO}{\fbox{\rule{0pt}{0.1ex}\textbf{Olaf}}}
\newcommand{\look}[1]{\markerO \textbf{*}
    \footnote{ #1 }}
\newcommand{\lookO}[1]{\markerO\textbf{*}
    \footnote{\textbf{Olaf:} #1 }}
  \renewcommand{\look}[1]{}
  \renewcommand{\lookO}[1]{}%
\renewcommand\mathcal\mathscr  
\numberwithin{equation}{section}
\newcounter{myenumi}
\newenvironment{myitemize}
{
  \begin{list}{$\bullet$}
  {
    \setlength{\topsep}{\smallskipamount}
    \setlength{\itemsep}{0.0em}
    \setlength{\leftmargin}{\parindent}
    \setlength{\labelwidth}{1em}
    \setlength{\labelsep}{0.5em}
  }
}
{
  \end{list}
}
\newtheoremstyle{mythmstyle}
  {\topsep}
  {\topsep}
  {\itshape}
  {}
  {\bfseries \sffamily}
  {.}
  {.5em}
  {}
\newtheoremstyle{mydefstyle}
  {\topsep}
  {\topsep}
  {\normalfont}
  {}
  {\bfseries \sffamily}
  {.}
  {.5em}
  {}
\theoremstyle{mythmstyle}       
\newtheorem{theorem}{Theorem}[section]
\newtheorem{proposition}[theorem]{Proposition}
\newtheorem{lemma}[theorem]{Lemma}
\newtheorem{corollary}[theorem]{Corollary}
\theoremstyle{mydefstyle}        
\newtheorem{assumption}[theorem]{Assumption}
\newtheorem{example}[theorem]{Example}
\newtheorem{remark}[theorem]{Remark}
\newtheorem*{remark*}{Remark}
\let\expandafter\oldproof\csname\string\proof\endcsname
\let\oldendproof\endproof
\renewenvironment{proof}[1][\bfseries\sffamily\proofname]{%
  \oldproof[\bfseries \sffamily #1]%
}{\oldendproof}
\newcommand{\abssqr}[2][{}]{\lvert{#2}\rvert^2_{#1}} 
\newcommand{\normsymb}{|\!|}
\newcommand{\norm}[2][{}]{\normsymb{#2}\normsymb_{{#1}}}    
\newcommand{\normsqr}[2][{}]{\normsymb{#2}\normsymb^2_{#1}} 
\newcommand{\iprod}[3][{}]{\langle{#2},{#3}\rangle_{#1}}  
\newcommand{\set}[2]{\{ \, #1 \, | \, #2 \, \} }      
\newcommand{\map}[3]{ #1 \colon #2 \longrightarrow #3}    
\newcommand{\bd}  {\partial}          
\newcommand{\restr}[1]{{\restriction}_{#1}} 
\newcommand{\dd}    {\, \mathrm d}    
\DeclareMathOperator{\dom}    {dom}
\DeclareMathOperator{\ran}    {ran}
\DeclareMathOperator{\id}     {id}   
\DeclareMathOperator{\supp}   {supp}
\DeclareMathOperator{\tr}     {tr}  
\DeclareMathOperator{\Span}    {span}
\newcommand{\specsymb} {\sigma} 
\newcommand{\spec}[2][{}]   {\specsymb_{\mathrm{#1}}(#2)}
\renewcommand{\phi}{\varphi}   
\renewcommand{\rho}{\varrho}   
\renewcommand{\Re}     {\mathrm {Re}\,}
\newcommand{\conj}[2][{}]{\overline {{#2}}^{#1}} 
\newcommand{\R}{\mathbb{R}} 
\newcommand{\C}{\mathbb{C}} 
\newcommand{\1}{\mathbbm 1}                    
\newcommand{\im}{\mathrm i} 
\newcommand{\wt}{\widetilde}           
\newcommand {\qf}[1]{\mathfrak{#1}}    
\newcommand{\HS}{\mathcal H}           
\newcommand{\HSaux}{\mathcal G}        
\newcommand{\Sobsymb} {\mathsf H}      
\newcommand{\Lsymb}    {\mathsf L}     
\newcommand{\Sobspace}[1][1]{\Sobsymb^{#1}} 
\newcommand{\Lpspace}[1][p]    {\Lsymb_{#1}}     
\newcommand{\Lsqrspace}    {\Lpspace[2]}     
\newcommand{\Lp}[2][p]{\Lpspace [#1]({#2})} 
\newcommand{\Lsqr}[2][{}]{\Lsqrspace^{#1}({#2})} 
\newcommand{\Sob}[2][1]{\Sobspace [#1]({#2})}         
\newcommand{\Neu}{{\mathrm N}}              
\newcommand{\Dir}{{\mathrm D}}              
\newcommand{\quadtext}[1]{\quad\text{#1}\quad}
\newcommand{\HDir}{H^\Dir}   
\newcommand{\wtHDir}{\widetilde H^\Dir}   
\newcommand{\RDir}{R^\Dir}   
\newcommand{\HNeu}{H}   
\newcommand{\RNeu}{R}   
\newcommand{\LS}{\mathcal N}           
\newcommand{\dplus}{\mathop{\dot+}}
\begin{document}

\title
{On a generalisation of Krein's example}

\ifthenelse{\isundefined \AMSstylefile } 
{
\author{Olaf Post	\qquad	Christoph Uebersohn}

\date{\today, \thistime,  \emph{File:} \texttt{\jobname.tex}}
}
{
\author{Olaf Post (\today)}
\address{Fachbereich 4 -- Mathematik,
         Universit\"at Trier,
         54286 Trier, Germany}
         \email{olaf.post@uni-trier.de}
         
\author{Christoph Uebersohn (\today)}
\address{Fachbereich 8 -- Institut f\"ur Mathematik, 
				 Johannes Gutenberg-Universit\"at Mainz,
         D-55099 Mainz, Germany}
         \email{uebersoc@uni-mainz.de}
}

\ifthenelse{\isundefined \draft}
{\date{\today}}  
{\date{\today, \thistime,  \emph{File:} \texttt{\jobname.tex}}} 

\maketitle

\begin{abstract}
  We generalise a classical example given by Krein in 1953. 
	We compute the difference of the resolvents and 
	the difference of the spectral projections explicitly. 
	We further give a full description of the unitary invariants,
	i.\,e., of the spectrum and the multiplicity.
  Moreover, we observe a link between the difference of 
	the spectral projections and Hankel operators.
\end{abstract}


2010~\emph{Mathematics Subject Classification}.
Primary 47B15; Secondary 47A55, 35J25, 47A10, 47B35.

\emph{Key~words~and~phrases}. Self-adjoint operators, spectral projections, 
	perturbation theory, boundary pairs, spectrum and resolvent, 
	Hankel operators.

%
\section{Introduction and main results}
\label{sec:intro}
%

\subsection{Introduction}

Krein presented in~\cite{Krein} a rigorous definition of the spectral
shift function $\xi=\xi({\bullet},A_1,A_0) \in \Lp[1] \R$ defined via
\begin{equation*}
  \tr(\chi(A_1)-\chi(A_0)) 
  = \int_\R \chi^{\prime}(\vartheta) \xi(\vartheta) \dd \vartheta,
\end{equation*}
whenever $\chi$ belongs to a suitable class of functions and $A_1-A_0$
is of trace class.  In a naive definition, one would choose the
indicator function $\chi=\1_{(-\infty,\vartheta)}$, as the above formula
then becomes formally
\begin{equation}
  \label{eq:ssf2}
  \tr\bigl(\1_{(-\infty,\vartheta)}(A_1)-\1_{(-\infty,\vartheta)}(A_0)\bigr) 
  = \xi(\vartheta).
\end{equation}
Unfortunately,\footnote{Maybe this is a fortune as it gave rise to new
  research \dots} formula~\eqref{eq:ssf2} is not true: even if
$A_1-A_0$ is a rank $1$ perturbation (and hence of trace class), the
difference of the spectral projections $\1_{(-\infty,\vartheta)}(A_1)
- \1_{(-\infty,\vartheta)}(A_0)$ need not to be of trace class, i.\,e.,
the left hand side of~\eqref{eq:ssf2} is not defined.  Krein presented
such an example in his paper~\cite{Krein}, where $A_1=(H+1)^{-1}$ and
$A_0=(H^\Dir+1)^{-1}$ are the resolvents at the spectral point $-1$
of the Neumann and Dirichlet Laplacian $H=\bigr({-}\frac {\dd^2}{\dd
  t^2}\bigr)^\Neu$ and $H^\Dir=\bigr({-}\frac {\dd^2}{\dd
  t^2}\bigr)^\Dir$ on the half-line $\R_+ = (0,\infty)$, respectively.
Krein showed that the difference of the spectral projections is an
integral operator given by
\begin{equation}
  \label{eq:spec.proj.class1}
  \bigg( \Big[ 
  \1_{(-\infty,\vartheta)}(A_0)-\1_{(-\infty,\vartheta)}(A_1)
  \Big] 
  \psi \bigg)(t)	
  = \frac 2 \pi
  \int_{\R_+} 
  \frac{\sin((\frac{1}{\vartheta} - 1)^{1/2} (t+\tau))}{t+\tau}
  \psi(\tau) \dd \tau
\end{equation}
for $t \in \R_+$, $0<\vartheta<1$ and $\psi \in \mathsf C_{\mathrm
  c}(\R_+)$, and hence not Hilbert Schmidt.  Kostrykin and Makarov
diagonalised the integral operator of~\eqref{eq:spec.proj.class1} and
proved that it has a simple purely absolutely continuous spectrum
filling in the interval $[{-}1,1]$; in particular, the integral
operator of~\eqref{eq:spec.proj.class1} is not compact,
see~\cite{Kostrykin_Makarov}.  Note that the kernel function of the
integral operator of~\eqref{eq:spec.proj.class1} depends only on the
sum of the variables; such operators on $\Lp[2] {\R_{+}}$ are called
\emph{Hankel (integral) operators}.  We refer to Peller's
monograph~\cite{Peller_II} for an overview on Hankel operators.

Relations between differences of spectral projections and
Hankel operators are also discussed in the work of
Pushnitski~\cite{Pushnitski_I,Pushnitski_II, Pushnitski_III} and
together with Yafaev~\cite{Pushnitski_Yafaev,Pushnitski_Yafaev_II} in
the framework of scattering theory, related to an idea of
Peller~\cite{Peller_I}.  We also refer to~\cite{Uebersohn} for an
approach based on a result of Megretski{\u\i}, Peller, and
Treil~\cite{Megretskii_et_al}.

In this paper we generalise Krein's example by considering
operators of the type 
\begin{align}
  \label{Gen.Neu/Dir}
	H = \Bigl({-}\frac{\dd^2}{\dd t^2}\Bigr)^{\Neu}
	\otimes \id + \id \otimes L
	\quadtext{and}
	H^{\Dir} = \Bigl({-}\frac{\dd^2}{\dd t^2}\Bigr)^{\Dir}
	\otimes \id + \id \otimes L
	\quadtext{in}
	\Lp[2]{\R_{+}} \otimes \mathcal{G},
\end{align}
where $ \mathcal{G} \neq \{ 0 \} $ is a separable complex Hilbert
space and $ L $ is a self-adjoint nonnegative operator on $
\mathcal{G} $ (precise definitions are given in
Section~\ref{sec:bd2.half-space}).  We call $H$ resp.\ $\HDir$ the
\emph{(abstract) Neumann} resp.\ \emph{Dirichlet operator}.  In
particular, this framework includes
\begin{enumerate}
\item Krein's example of the half-line $ \R_{+}$ with $L=0$ and
  $\mathcal G=\C$;
\item the example of the classical half-space $ \R_{+} \times \R^{n-1}
  $ with $L=-\Delta_{\R^{n-1}}$ and $ n \geq 2 $;
\item the case when $L$ is (minus) the Laplacian on a generally
  noncompact manifold $Y$, e.\,g. on the cylinder $\R_{+} \times Y$
  with Neumann resp.\ Dirichlet boundary conditions on $\{0\}\times
  Y$.
\end{enumerate}
We consider the resolvents
\begin{align}	\label{Res.D/N_at_-1}
	A_0 = (\HDir + 1)^{-1} 
	\quad	\text{and}	\quad 
	A_1 = (H + 1)^{-1} 
\end{align}
of the operators $ H^{\Dir} $ and $ H $ defined in~\eqref{Gen.Neu/Dir}
at the spectral point $-1$.  The difference $A_1-A_0$ of the
resolvents will be computed with the help of a Krein-type resolvent
formula from the theory of boundary pairs~\cite{Post_I}.

Next we would like to compute the difference 
$ \1_{(-\infty, \vartheta)}(A_0)-\1_{(-\infty, \vartheta)}(A_1) $ 
of the spectral projections for all $0<\vartheta<1$.
It is generally hard to compute 
differences of spectral projections explicitly. 
In our example, however, the computation can be performed,
using the transformation formula for spectral measures
and the above mentioned convolution-type formula from~\cite{Weidmann}.
This idea is borrowed from Krein's example.

We give a full description of the unitary invariants of the resolvent
difference and of the difference of the spectral projections.
Moreover, the spectral properties establish a link between the
difference of the spectral projections and Hankel operators.

Operators of the type~\eqref{Gen.Neu/Dir} have been studied before;
criteria for self-adjointness (see, e.\,g., Schmüdgen's
monograph~\cite{Schmuedgen}), the spectrum (see,
e.\,g.,~\cite{Schmuedgen} or Weidmann's monograph~\cite{Weidmann}),
and a convolution-type formula for the spectral projection
(see~\cite{Weidmann}) are known and will be very useful in this paper.
There are classical works on spectral theory of self-adjoint boundary
value problems with operator-valued potential as
in~\eqref{Gen.Neu/Dir}, see, e.\,g., Gorbachuk and
Kutovoi~\cite{Gorbachuk,Gorbachuk_II, Gorbachuk_Kutovoi,
  Gorbachuk_Kutovoi_II,Kutovoi} and the
monograph~\cite{gorbachuk-gorbachuk:91}.  Gorbachuk and Kutovoi showed
in~\cite{Gorbachuk_Kutovoi} that $ A_1-A_0 $ is trace class if and
only if (in the present notation) $ (L+1)^{-1} $ is trace
class. Sufficient criteria for $ A_1-A_0 $ to belong to Schatten
classes can be found in~\cite{Gorbachuk_Kutovoi_II}.  The proofs rely
on the resolvent identities and the ideal properties of Schatten
classes; the resolvent difference is not computed explicitly
in~\cite{Gorbachuk_Kutovoi,Gorbachuk_Kutovoi_II}.

Abstract boundary value problems have often been treated using
operator theory.  We refer to the review article~\cite{dhms:12} for an
overview on boundary triplets and also to~\cite{Post_I} for the
concept of boundary pairs, see also the references therein.  Such
concepts allow for example to calculate differences of resolvents of
operators with different boundary conditions.  There are related works
by Boitsev, Neidhardt, and Popov~\cite{Boitsev_Neidhardt_Popov} on
tensor products of boundary triplets (with bounded operator $L$),
Malamud and Neidhardt~\cite{Malamud_Neidhardt} for unitary equivalence
and regularity properties of different self-adjoint realisations,
Gesztesy, Weikard, and Zinchenko~\cite{Gesztesy_Weikard_Zinchenko__I,
  Gesztesy_Weikard_Zinchenko__II} for a general spectral theory of
Schr\"odinger operators with bounded operator potentials, and
Mogilevskii~\cite{Mogilevskii}, see also the references therein.
Moreover, when finishing this paper, the authors of the present paper
have learned about the recent paper~\cite{Boitsev_et_al}, where
Boitsev, Brasche, Malamud, Neidhardt and Popov construct a boundary
triplet for the adjoint of the symmetric operator $ T \otimes \id +
\id \otimes L $ with $ T $ being symmetric and $ L $ being
self-adjoint.  This generalises the situation of~\eqref{Gen.Neu/Dir},
where $ T = - \dd^{2}/\dd t^{2} $ on $ \Lsqr{\R_{+}} $.  The focus
in~\cite{Boitsev_et_al} is on self-adjoint extensions which do not
respect the tensor structure~\eqref{Gen.Neu/Dir} as models for quantum
systems coupled to a reservoir.  Note that
in~\cite{Malamud_Neidhardt,Boitsev_et_al} one has to ``regularise''
the boundary triplet (i.e., one has to modify the boundary map and
spectrally decompose $L$ into bounded operators) in order to treat
also unbounded operators $L$.  In our approach, we can directly treat
unbounded operators $L$ without changing the boundary map or
decomposing $L$.  The special case of operators $L$ with purely
discrete spectrum has been treated e.\,g.\,in~\cite[Sec.~6.4]{Post_I}
or in a slightly different setting in~\cite[Sec.~3.5.1]{post:12}.

The results of this paper will be part of the PhD thesis of the second
author at Johannes Gutenberg University Mainz.

\subsection{Main results}

Let $A_0$ and $A_1$ be the resolvents defined in~\eqref{Res.D/N_at_-1}
of the (abstract) Dirichlet and Neumann operators given
in~\eqref{Gen.Neu/Dir} above.
\begin{theorem}
  \label{thm:main1}
	\begin{enumerate}
		\item \label{thm:main1__Teil(a)}	
			The resolvent difference $A_1-A_0$ 
			acts on elementary tensors as follows:
			\begin{align*}
				\big( [A_1-A_0] (\psi \otimes \varphi) \big)(t)	
				= \int_{\mathbb{R}_{+}} 
				\psi(\tau) \exp \! \big( {-} (L+1)^{1/2} (t+\tau) \big)
				(L+1)^{-1/2} \varphi \dd \tau
			\end{align*}
			for all $ \psi \in \mathsf C_{\mathrm c}(\mathbb{R}_{+}) $ 
			and all $ \varphi \in \mathcal{G} $.
		\item \label{thm:main1__Teil(b)}
			Let $0<\vartheta<1$ and 
			let $\alpha(\vartheta) = \frac{1}{\vartheta} - 1 >0$. 
			Then the difference of 
			the spectral projections of $ A_0 $ and $ A_1 $ associated with 
			the open interval $ (-\infty,\vartheta) $ acts on elementary tensors 
			as follows:
			\begin{align*}
				&\Big( \big[ \1_{(-\infty,\vartheta)}(A_0) 
				- \1_{(-\infty,\vartheta)}(A_1) \big](\psi \otimes \varphi)
				\Big)(t)	\\
				&= \frac{2}{\pi} \int_{\mathbb{R}_{+}}
				\psi(\tau)
				\1_{[0,\alpha(\vartheta))}(L) 
				\frac{\sin \! \big( (\alpha(\vartheta) - L)^{1/2} (t+\tau) \big)}{t+\tau}
				\varphi
				\dd \tau
			\end{align*}
			for all $ \psi \in \mathsf C_{\mathrm c}(\mathbb{R}_{+}) $ and 
			all $ \varphi \in \mathcal{G} $.
	\end{enumerate}
\end{theorem}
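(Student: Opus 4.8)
The plan is to handle the two parts separately, in each case reducing to the scalar half-line computation of Krein's example but with the spectral parameter shifted by the operator $L$.

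For part~\itemref{thm:main1__Teil(a)} I would apply the Krein-type resolvent formula for boundary pairs from~\cite{Post_I}. At the spectral point $-1$ the homogeneous equation is $-u'' + (L+1)u = 0$, whose solution decaying as $t \to \infty$ with boundary value $\varphi \in \mathcal{G}$ at $t = 0$ is $u(t) = \e^{-(L+1)^{1/2}t}\varphi$. This is the $\gamma$-field, $\gamma\varphi = \e^{-(L+1)^{1/2}\bullet}\varphi$, and the associated Dirichlet-to-Neumann (Weyl) operator is $M = (L+1)^{1/2}$, so that the resolvent formula reads $A_1 - A_0 = \gamma M^{-1}\gamma^{*}$. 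Computing $\gamma^{*}(\psi\otimes\varphi) = \bigl(\int_{\mathbb{R}_{+}}\psi(\tau)\e^{-(L+1)^{1/2}\tau}\dd\tau\bigr)\varphi$, then applying $M^{-1} = (L+1)^{-1/2}$ and finally $\gamma$, produces exactly the stated kernel. Equivalently, and perhaps more transparently, one can compute the Dirichlet and Neumann Green's functions of $-\dd^2/\dd t^2 + (L+1)$ directly as $\frac1k\e^{-k\max(t,\tau)}\sinh(k\min(t,\tau))$ and $\frac1k\e^{-k\max(t,\tau)}\cosh(k\min(t,\tau))$ with $k = (L+1)^{1/2}$; their difference collapses via $\cosh(k\min) - \sinh(k\min) = \e^{-k\min}$ to $\frac1k\e^{-k(t+\tau)}$, which is the claim.

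For part~\itemref{thm:main1__Teil(b)} the first step is the spectral mapping theorem: since $H, H^{\Dir} \geq 0$ and $x \mapsto (x+1)^{-1}$ is strictly decreasing, the condition $(\lambda+1)^{-1} < \vartheta$ is equivalent to $\lambda > \alpha(\vartheta)$, whence $\1_{(-\infty,\vartheta)}(A_i) = \1_{(\alpha,\infty)}(H_i)$ for $i = 0,1$ (with $H_0 = H^{\Dir}$, $H_1 = H$). Taking complements, the difference of the spectral projections equals $\1_{[0,\alpha]}(H) - \1_{[0,\alpha]}(H^{\Dir})$. I would then diagonalise the half-line Laplacians by the cosine and sine transforms respectively; tensoring with $\id$ and using the convolution-type formula of~\cite{Weidmann}, both $H$ and $H^{\Dir}$ become multiplication by $k^2 + L$, so that $\1_{[0,\alpha]}$ becomes multiplication by the cutoff $\1\{k \leq (\alpha - L)^{1/2}\}$, which is nonzero only where $L < \alpha$. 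Transforming back, the Neumann contribution has kernel $\frac{2}{\pi}\int_0^{(\alpha-L)^{1/2}}\cos(kt)\cos(k\tau)\dd k$ and the Dirichlet one $\frac{2}{\pi}\int_0^{(\alpha-L)^{1/2}}\sin(kt)\sin(k\tau)\dd k$; subtracting and using $\cos(kt)\cos(k\tau) - \sin(kt)\sin(k\tau) = \cos(k(t+\tau))$ reduces matters to $\int_0^{(\alpha-L)^{1/2}}\cos(k(t+\tau))\dd k = \sin((\alpha-L)^{1/2}(t+\tau))/(t+\tau)$, giving the asserted kernel together with the factor $\1_{[0,\alpha)}(L)$.

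The trigonometric and hyperbolic identities and the endpoint bookkeeping are routine (the loci $k = (\alpha-L)^{1/2}$ and $L = \alpha$ are $k$-null or make the kernel vanish, so open versus closed intervals are immaterial). The main obstacle is the rigorous treatment of the unbounded operator $L$ throughout: one must justify the functional calculus expressions $(L+1)^{1/2}$, $\e^{-(L+1)^{1/2}t}$ and $\1\{k \leq (\alpha-L)^{1/2}\}$, interchange the $L$-functional calculus with the $k$-integration and with the sine and cosine transforms, and interpret every kernel formula as an identity of operators on the algebraic tensor product $\Cci{\mathbb{R}_{+}} \otimes \mathcal{G}$. This last point is essential because, exactly as in Krein's example, the operators are in general not Hilbert--Schmidt, so the formulas cannot be read as kernels of Hilbert--Schmidt operators but only as prescriptions valid on this dense domain.
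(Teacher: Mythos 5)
Your proposal is correct, and for part~\itemref{thm:main1__Teil(a)} it is essentially the paper's own argument: the paper constructs the boundary pair $(\Gamma,\mathcal G)$ with $\Gamma u=u(0)$, computes the solution operator $S(-1)\varphi=\exp\!\big({-}(L+1)^{1/2}\,{\bullet}\big)\varphi$ and the Dirichlet-to-Neumann operator $\Lambda(-1)=(L+1)^{1/2}$, proves elliptic regularity, and invokes the Krein-type formula $A_1-A_0=\bar S(-1)\Lambda(-1)^{-1}\big(\bar S(-1)\big)^{*}$ from~\cite{Post_I} (Proposition~\ref{Darstellungsformel_fuer_die_Differenz_der_Resolventen} and Proposition~\ref{Resultate_zum_Randpaar-Modell}); this is exactly your $\gamma M^{-1}\gamma^{*}$ with the same $\gamma$-field and Weyl operator, while your Green's-function variant is a more elementary alternative the paper does not pursue. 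For part~\itemref{thm:main1__Teil(b)} your route genuinely differs in one place: after the common spectral-mapping step (the paper's identity~\eqref{Trafo_Spektralmasse}), the paper does \emph{not} rediagonalise the half-line Laplacians; it combines Weidmann's convolution-type formula for spectral projections of tensor sums, Proposition~\ref{Allgemeine_Fakten_zum_Spektrum_von_H}~\eqref{Weimann-Formel}, with Krein's scalar kernel~\eqref{eq:spec.proj.class1} cited as known, plus Fubini's theorem, and then verifies that the resulting vector-valued function lies in $\mathcal H$ (Lemma~\ref{Spektr.proj.diff.__L_Mult.op.}). Your sine/cosine-transform computation instead re-derives Krein's kernel in the operator-valued setting; this is more self-contained, but it obliges you to justify the unitary diagonalisation of $H$ and $\HDir$ and the interchange of the $k$-integration with the functional calculus of $L$ --- precisely the interchanges the paper sidesteps by quoting \cite[Thm~8.34]{Weidmann}. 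Finally, what you compress into a closing caveat (the verifications that $\ran(\Gamma)\subset\dom(L^{1/4})$ and $\dom(\Lambda(-1))\subset\dom(L^{1/2})$, which are what make the boundary-pair formula applicable for unbounded $L$) is where the bulk of the paper's Section~\ref{sec:bd2.half-space} is spent; since you correctly identify this as the main obstacle rather than dismissing it, the difference is one of detail, not of strategy.
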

If we represent $ L $ as multiplication operator by the independent variable 
on a von Neumann direct integral (see below), then a scaling transformation
yields the following beautiful representation with separated variables 
for the resolvent difference $A_1-A_0$: 
\begin{theorem}	\label{Resolventendifferenz_unitaer_aequivalent}
	The resolvent difference $A_1-A_0$ is unitarily equivalent to
	\begin{align*}
		\Bigg(
		\bigg[ \Big( {-} \frac{\dd^{2}}{\dd t^{2}} \Big)
		^{\! \mathrm{N}} + 1 \bigg]^{-1} 
		- \bigg[ \Big( {-}\frac{\dd^{2}}{\dd t^{2}} \Big)
		^{\! \mathrm{D}} + 1 \bigg]^{-1} 
		\Bigg)
		\otimes
		(L+1)^{-1}
		\quad
		\text{on } \Lsqr{\R_+} \otimes \mathcal{G}.
	\end{align*}
\end{theorem}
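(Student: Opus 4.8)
The plan is to diagonalise $L$ and thereby turn the operator-valued kernel of Theorem~\ref{thm:main1}\itemref{thm:main1__Teil(a)} into a genuine field of scalar half-line operators, on each of which a single dilation produces the separated form. First I would invoke the spectral theorem to write $\HSaux \cong \int_{\sigma(L)}^{\oplus} \HSaux_\lambda \dd\mu(\lambda)$ as a von Neumann direct integral with $L$ acting as multiplication by the independent variable $\lambda$; since $L \geq 0$ we have $\sigma(L) \subseteq [0,\infty)$. Under the corresponding identification $\Lsqr{\R_+} \otimes \HSaux \cong \int^{\oplus} \big(\Lsqr{\R_+} \otimes \HSaux_\lambda\big) \dd\mu(\lambda)$, the formula in Theorem~\ref{thm:main1}\itemref{thm:main1__Teil(a)} exhibits $A_1 - A_0$ as a decomposable operator whose fibre over $\lambda$ is $T_\lambda \otimes \id_{\HSaux_\lambda}$, where $T_\lambda$ is the integral operator on $\Lsqr{\R_+}$ with kernel
\[
  K_\lambda(t,\tau) = (\lambda+1)^{-1/2}
    \exp\!\big({-}(\lambda+1)^{1/2}(t+\tau)\big).
\]
The preliminary task here is the direct-integral bookkeeping: verifying that $\lambda \mapsto T_\lambda$ is a measurable field, and that substituting the multiplication representation of $(L+1)^{\pm 1/2}$ into the kernel, initially given only on elementary tensors with $\psi \in \mathsf C_{\mathrm c}(\R_+)$, extends correctly by boundedness and density.

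Next comes the scaling, which is the analytic core. Fix $\lambda$, set $\beta_\lambda = (\lambda+1)^{1/2} \geq 1$, and let $D_\lambda$ be the dilation $(D_\lambda f)(t) = \beta_\lambda^{1/2} f(\beta_\lambda t)$, a unitary on $\Lsqr{\R_+}$. A single change of variables shows that conjugation by $D_\lambda$ transforms an integral kernel by $K(t,\tau) \mapsto \beta_\lambda^{-1} K(t/\beta_\lambda, \tau/\beta_\lambda)$; applied to $K_\lambda$ this yields the kernel $(\lambda+1)^{-1} e^{-(t+\tau)}$. Hence $D_\lambda^{*} T_\lambda D_\lambda = (\lambda+1)^{-1} R_0$, where $R_0$ denotes the integral operator on $\Lsqr{\R_+}$ with kernel $e^{-(t+\tau)}$; moreover $\lambda \mapsto D_\lambda$ is measurable, so the $D_\lambda$ assemble into a decomposable unitary on the direct integral.

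It then remains to identify $R_0$ with the scalar Krein resolvent difference appearing in the statement. For ${-}u'' + u$ on $\R_+$ the decaying solution is $e^{-t}$, while the boundary solutions are $\cosh t$ for Neumann and $\sinh t$ for Dirichlet; both Wronskians against $e^{-t}$ equal ${-}1$, so the two Green's functions are $\cosh(t_<)\,e^{-t_>}$ and $\sinh(t_<)\,e^{-t_>}$, and their difference collapses via $\cosh - \sinh = e^{-\bullet}$ to exactly $e^{-(t+\tau)}$. Thus $R_0 = \big[({-}\dd^2/\dd t^2)^{\Neu} + 1\big]^{-1} - \big[({-}\dd^2/\dd t^2)^{\Dir} + 1\big]^{-1}$.

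Reassembling, the decomposable unitary $\int^{\oplus} D_\lambda \dd\mu(\lambda)$ carries $A_1 - A_0$ to the decomposable operator with fibre $(\lambda+1)^{-1} R_0 \otimes \id_{\HSaux_\lambda}$; since $\lambda \mapsto (\lambda+1)^{-1}$ is precisely the multiplier representing $(L+1)^{-1}$, this operator is $R_0 \otimes (L+1)^{-1}$, which is the asserted form. I expect the only genuine obstacle to be the direct-integral formalism — the measurability of the fields $\lambda \mapsto T_\lambda$ and $\lambda \mapsto D_\lambda$ and the identification of the resulting decomposable operator with the tensor product against $(L+1)^{-1}$ — whereas the decisive analytic step, the kernel rescaling, is a one-line substitution.
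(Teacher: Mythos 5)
Your proposal is correct and follows essentially the same route as the paper: represent $L$ as multiplication by $\lambda$ on a von Neumann direct integral, observe that $A_1-A_0$ becomes a decomposable operator whose fibres are Hankel-type integral operators tensored with the identity, conjugate fibrewise by the dilation $(\lambda+1)^{1/4}f\big((\lambda+1)^{1/2}t\big)$ (the paper's $U_\lambda$ at $z=-1$), and reassemble the resulting multiplier $(\lambda+1)^{-1}$ into $(L+1)^{-1}$. The only cosmetic differences are that the paper runs the argument for general $z\in(-\infty,\min\sigma)$ rather than just $z=-1$, and it cites the rank-one half-line formula with kernel $e^{-(t+\tau)}=\Psi_0(t)\Psi_0(\tau)$ as well known where you rederive it via Green's functions.
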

For brevity let us write $ \sigma = \sigma(L) $ for the spectrum of
$L$.  It is well known that $L$ is unitarily equivalent to the
multiplication operator by the independent variable on a von Neumann
direct integral $\int_{\sigma}^{\oplus} \mathcal{G}(\lambda) \dd
\mu(\lambda)$, see Theorem~\ref{Theorem_von_Neumann_direct_integral}
below.  Moreover, from Krein's example~\cite{Krein} we know that the
first factor (the difference of the Neumann and Dirichlet resolvent)
in the previous theorem is a rank $1$ operator with eigenvalue $0$ of
infinite multiplicity and simple eigenvalue $1/2$.  Hence we conclude:
\begin{corollary}
  \label{corollary_from_main_thm__spectral.properties}
  One has
  \begin{align*}
    \sigma(A_1-A_0)
    = \{ 0 \} \cup
    \Big\{ \frac{1}{2(\lambda+1)} : \lambda \in \sigma \Big\},
  \end{align*}
  and the spectral decomposition of $A_1-A_0$ is as follows:
  \begin{enumerate}
  \item $0$ is an eigenvalue of infinite multiplicity.
  \item For $ {\bullet} \in \{ \mathrm{p}, \mathrm{ac},
    \mathrm{sc} \} $ one has 
		$ \sigma_{{\bullet}}(A_1-A_0) \setminus \{ 0 \}
    = \big\{ \frac{1}{2(\lambda+1)} 
    : \lambda \in \sigma_{{\bullet}} \big\} $, and the 
    multiplicity of $ \frac{1}{2(\lambda+1)} $ 
    (with respect to $A_1-A_0$) 
    coincides with the multiplicity of 
    $ \lambda $ (with respect to $ L $) 
    for	$ \dd \mu_{{\bullet}} $-almost all $ \lambda $.
  \end{enumerate}
	In particular, $A_1-A_0$ is compact if and only if 
	$L$ has a purely discrete spectrum.\footnote{This is equivalent with 
	$ (L+1)^{-1} $ being compact, cf.\,\cite{Gorbachuk_Kutovoi,
	Gorbachuk_Kutovoi_II}.}
\end{corollary}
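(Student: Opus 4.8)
The plan is to reduce the corollary to the tensor factorisation supplied by Theorem~\ref{Resolventendifferenz_unitaer_aequivalent}. Write $B$ for the Krein resolvent difference $\bigl[(-\dd^{2}/\dd t^{2})^{\mathrm N}+1\bigr]^{-1} - \bigl[(-\dd^{2}/\dd t^{2})^{\mathrm D}+1\bigr]^{-1}$ on $\Lsqr{\R_+}$. As recalled from Krein's example, $B$ is self-adjoint with the eigenvalue $0$ of infinite multiplicity and the simple eigenvalue $\tfrac12$, so that $B = \tfrac12 P$ with $P$ the rank-one orthogonal projection onto the associated unit eigenvector $u \in \Lsqr{\R_+}$. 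Splitting $\Lsqr{\R_+} = \C u \oplus (\C u)^{\perp}$ and using the identification $\C u \otimes \mathcal{G} \cong \mathcal{G}$, Theorem~\ref{Resolventendifferenz_unitaer_aequivalent} then exhibits $A_1 - A_0$ as unitarily equivalent to
\begin{align*}
  \tfrac12 (L+1)^{-1} \oplus 0
  \qquad\text{on}\qquad
  \mathcal{G} \oplus \bigl( (\C u)^{\perp} \otimes \mathcal{G} \bigr).
\end{align*}

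First I would dispose of the zero summand: since $(\C u)^{\perp}$ is infinite-dimensional and $\mathcal{G} \neq \{0\}$, the space $(\C u)^{\perp} \otimes \mathcal{G}$ is infinite-dimensional and the operator vanishes on it. This immediately gives assertion~(a), namely that $0$ is an eigenvalue of $A_1 - A_0$ of infinite multiplicity.

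The heart of the matter is the nonzero summand $f(L) = \tfrac12 (L+1)^{-1}$ on $\mathcal{G}$, where $f(\lambda) = \tfrac{1}{2(\lambda+1)}$ is a strictly decreasing $C^{\infty}$-diffeomorphism of $[0,\infty)$ onto $(0,\tfrac12]$. Here I would invoke the direct integral representation of Theorem~\ref{Theorem_von_Neumann_direct_integral}, under which $L$ is multiplication by $\lambda$ on $\int_{\sigma}^{\oplus} \mathcal{G}(\lambda) \dd\mu(\lambda)$; then $f(L)$ is multiplication by $f(\lambda)$, and the change of variables $s = f(\lambda)$ (the push-forward $f_{*}\mu$) realises $f(L)$ as multiplication by the independent variable over $f(\sigma) = \{\tfrac{1}{2(\lambda+1)} : \lambda \in \sigma\}$, with the fibre at $s = f(\lambda)$ equal to $\mathcal{G}(\lambda)$. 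Reading off the spectral data yields $\sigma_{\bullet}(f(L)) = f(\sigma_{\bullet})$ for $\bullet \in \{\mathrm p, \mathrm{ac}, \mathrm{sc}\}$ together with the coincidence of multiplicities, and combining this with the zero summand produces both the spectrum $\sigma(A_1-A_0) = \{0\} \cup \{\tfrac{1}{2(\lambda+1)} : \lambda \in \sigma\}$ and assertion~(b).

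For the final sentence I would argue that $\tfrac12(L+1)^{-1} \oplus 0$ is compact if and only if $\tfrac12(L+1)^{-1}$ is (each summand being the compression to a reducing subspace, and the zero operator being compact), which in turn holds if and only if $(L+1)^{-1}$ is compact, i.e.\ exactly when $L$ has purely discrete spectrum. The main obstacle lies in the preceding paragraph: one must check carefully that the $C^{1}$ change of variables $s = f(\lambda)$ preserves the pure point, absolutely continuous and singular continuous parts of the spectral measure of $L$ and leaves the multiplicity function unchanged for $\dd\mu_{\bullet}$-almost all $\lambda$; this rests on $f$ being a diffeomorphism, hence mapping Lebesgue-null sets to Lebesgue-null sets in both directions. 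By contrast, the operator-theoretic reduction in the first paragraph is elementary.
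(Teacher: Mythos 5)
Your proposal is correct and follows essentially the same route as the paper: both pass through Theorem~\ref{Resolventendifferenz_unitaer_aequivalent}, exploit the rank-one structure of the Krein difference $B_{\Psi_0}=\langle\,\bullet\,,\Psi_0\rangle\Psi_0$ (with $\|\Psi_0\|^2=\tfrac12$) to obtain the block decomposition $0\oplus\bigl[\tfrac12(L+1)^{-1}\bigr]$, and then conclude by direct-integral spectral theory. The only difference is one of explicitness: where the paper appeals to ``standard arguments from spectral theory'' with a citation to \cite[Chapter 7]{Birman_Solomyak}, you spell out those arguments via the push-forward of $\mu$ under the diffeomorphism $\lambda\mapsto\tfrac{1}{2(\lambda+1)}$, including the null-set-preservation point needed for the $\mathrm{p}/\mathrm{ac}/\mathrm{sc}$ decomposition, which is exactly the content being invoked.
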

The spectral decomposition of the difference of the spectral 
projections looks as follows:
\begin{theorem}	\label{Ergebnis_I_Differenz_der_Spektralprojektoren}
  Let $0<\vartheta<1$ and 
	let $\alpha(\vartheta) = \frac{1}{\vartheta} - 1 >0$. 
  Then one has:
  \begin{enumerate}
  \item $ \sigma\big( \1_{(-\infty,\vartheta)}(A_0) 
    - \1_{(-\infty,\vartheta)}(A_1) \big)
    = 
    \begin{cases}
      [-1,1]	&	\text{if } \mu \big(\sigma \cap [0,\alpha(\vartheta)) \big) > 0	\\
      \{ 0 \}	&	\text{if } \mu \big(\sigma \cap [0,\alpha(\vartheta)) \big) = 0.
    \end{cases} $
  \item $ \sigma_{\mathrm{p}}\big( \1_{(-\infty,\vartheta)}(A_0) 
    - \1_{(-\infty,\vartheta)}(A_1)  \big)
    = 
    \begin{cases}
      \emptyset	&	\text{if } \mu(\sigma \cap [\alpha(\vartheta),\infty)) = 0	\\
      \{ 0 \}	&	\text{if } \mu(\sigma \cap [\alpha(\vartheta),\infty)) > 0.
    \end{cases} $
    
    If $ \mu(\sigma \cap [\alpha(\vartheta),\infty)) > 0 $ then 
    the multiplicity of the eigenvalue $ 0 $ is infinite.
  \item $ \sigma_{\mathrm{ac}} \big( \1_{(-\infty,\vartheta)}(A_0) 
    - \1_{(-\infty,\vartheta)}(A_1)  \big)
    = 
    \begin{cases}
      [-1,1]	&	\text{if } \mu \big(\sigma \cap [0,\alpha(\vartheta)) \big) > 0	\\
      \emptyset	&	\text{if } \mu \big(\sigma \cap [0,\alpha(\vartheta)) \big) = 0.
    \end{cases} $
    
    If $ \mu \big(\sigma \cap [0,\alpha(\vartheta)) \big) > 0 $ 
		then the (uniform) multiplicity of the absolutely continuous spectrum 
		equals the dimension of 
		$ \int_{\sigma \cap [0, \alpha(\vartheta))}^{\oplus}
    \mathcal{G}(\lambda) \dd\mu(\lambda) $.
  \item The singular continuous spectrum is empty. 
  \end{enumerate}
\end{theorem}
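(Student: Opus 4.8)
\emph{Proof proposal.} The plan is to diagonalise $L$ by the von Neumann direct integral of Theorem~\ref{Theorem_von_Neumann_direct_integral} and to decompose the difference of spectral projections $D(\vartheta) := \1_{(-\infty,\vartheta)}(A_0) - \1_{(-\infty,\vartheta)}(A_1)$ fibrewise. Writing $L$ as multiplication by $\lambda$ on $\int_\sigma^\oplus \mathcal{G}(\lambda)\dd\mu(\lambda)$ and reading the kernel off part~(b) of Theorem~\ref{thm:main1}, the operator $D(\vartheta)$ becomes a direct integral $\int_\sigma^\oplus D_\lambda \dd\mu(\lambda)$ whose fibre $D_\lambda$ acts on $\Lsqr{\R_+}\otimes\mathcal{G}(\lambda)$ by
\begin{equation*}
  D_\lambda = \1_{[0,\alpha(\vartheta))}(\lambda)\,\Gamma_{c(\lambda)}\otimes\id_{\mathcal{G}(\lambda)}, \qquad c(\lambda) = (\alpha(\vartheta)-\lambda)^{1/2},
\end{equation*}
where $\Gamma_c$ denotes the scalar Hankel operator on $\Lsqr{\R_+}$ with kernel $\frac{2}{\pi}\frac{\sin(c(t+\tau))}{t+\tau}$. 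Thus for $\lambda \ge \alpha(\vartheta)$ the fibre vanishes, while for $\lambda < \alpha(\vartheta)$ it is the Krein--Kostrykin--Makarov operator tensored with the identity.

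Next I would untwist the direct integral by the same scaling idea used in Theorem~\ref{Resolventendifferenz_unitaer_aequivalent}. The dilation $(W_c f)(t) = \sqrt{c}\,f(ct)$ is unitary on $\Lsqr{\R_+}$, and a short computation gives $W_c^{-1}\Gamma_1 W_c = \Gamma_{1/c}$, so all $\Gamma_c$ with $c>0$ are unitarily equivalent to the fixed operator $\Gamma := \Gamma_1$; by Kostrykin and Makarov~\cite{Kostrykin_Makarov} (cf.\ the discussion of Krein's example) $\Gamma$ has simple, purely absolutely continuous spectrum filling $[-1,1]$. Since $c(\lambda)$ is continuous, the field $\lambda\mapsto W_{c(\lambda)}\otimes\id_{\mathcal{G}(\lambda)}$ is measurable and defines a unitary over $\sigma_< := \sigma\cap[0,\alpha(\vartheta))$. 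Splitting the Hilbert space as $\mathcal{H}_<\oplus\mathcal{H}_\ge$ along $\sigma_<$ and $\sigma_\ge := \sigma\cap[\alpha(\vartheta),\infty)$, and using that a direct integral of a constant fibre is the tensor product $\Gamma\otimes\id$, this yields the normal form
\begin{equation*}
  D(\vartheta) \cong \big(\Gamma\otimes\id_{\mathcal{K}}\big)\oplus 0_{\mathcal{H}_\ge}, \qquad \mathcal{K} := \int_{\sigma_<}^\oplus\mathcal{G}(\lambda)\dd\mu(\lambda), \quad \text{on } \Lsqr{\R_+}\otimes\mathcal{K} \oplus \mathcal{H}_\ge.
\end{equation*}

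From this normal form all four assertions are read off. Via $\Gamma\cong$ multiplication by $x$ on $\Lsqr{[-1,1]}$, the first summand $\Gamma\otimes\id_{\mathcal{K}}$ is unitarily equivalent to multiplication by $x$ on $\Lsqr{[-1,1]}\otimes\mathcal{K}$; it is nontrivial exactly when $\mathcal{K}\neq\{0\}$, i.e.\ when $\mu(\sigma_<)>0$, and then has purely absolutely continuous spectrum $[-1,1]$ of uniform multiplicity $\dim\mathcal{K} = \dim\int_{\sigma_<}^\oplus\mathcal{G}(\lambda)\dd\mu(\lambda)$, with no eigenvalues and no singular continuous part. The second summand $0_{\mathcal{H}_\ge}$ is nonzero exactly when $\mu(\sigma_\ge)>0$, in which case $\mathcal{H}_\ge$ is infinite-dimensional (it already contains the factor $\Lsqr{\R_+}$), contributing only the eigenvalue $0$ of infinite multiplicity. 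Combining the two summands gives (a) for the total spectrum (noting $0\in[-1,1]$), (b) for the point spectrum, (c) for the absolutely continuous spectrum and its multiplicity, and (d) for the absence of singular continuous spectrum.

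The main obstacle is the untwisting step: one must verify that the measurable field $W_{c(\lambda)}$ genuinely transports the direct integral $\int_{\sigma_<}^\oplus \Gamma_{c(\lambda)}\otimes\id_{\mathcal{G}(\lambda)}\dd\mu(\lambda)$ into the constant field $\Gamma\otimes\id$, so that the result is an honest tensor product. This is precisely what excludes singular continuous spectrum, since integrating a genuinely $\lambda$-dependent family of absolutely continuous operators could a priori generate a singular continuous part; it is only because the fibres are mutually unitarily equivalent through a measurable intertwiner that the spectral type is preserved. A minor additional point is to confirm the infinite-dimensionality of $\mathcal{H}_\ge$ and the nonvanishing of $\mathcal{K}$ under the stated measure conditions, which follow from $\mathcal{G}(\lambda)\neq\{0\}$.
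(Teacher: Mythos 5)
Your proposal is correct and follows essentially the same route as the paper: decompose fibrewise over the direct integral of $L$, reduce each fibre Hankel operator $\Gamma_{c(\lambda)}$ to the fixed Kostrykin--Makarov operator $K$ by the dilation $(\widetilde U_\lambda f)(t)=(\alpha(\vartheta)-\lambda)^{1/4}f((\alpha(\vartheta)-\lambda)^{1/2}t)$, and read off the spectral data from the resulting model (multiplication by the independent variable on $\Lsqr{[-1,1]}\otimes\mathcal K$, plus a zero block on the $\lambda\ge\alpha(\vartheta)$ part). Your explicit attention to the untwisting of the direct integral into a genuine tensor product is exactly the step the paper performs via $\widetilde U^{-1}\widetilde T\widetilde U=\int^{\oplus}\widetilde U_\lambda^{-1}\widetilde T_\lambda\widetilde U_\lambda \dd\mu(\lambda)=K\otimes\id$.
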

Let us close this subsection with a remark and an example.
\begin{remark}[Link to Hankel operators]
	Observe that
	$\1_{(-\infty,\vartheta)}(A_0) - \1_{(-\infty,\vartheta)}(A_1)$
	is unitarily equivalent to its negative, that its kernel is 
	either trivial or infinite dimensional, and that zero belongs 
	to its spectrum, for all $0<\vartheta<1$. 
	Consequently, the characterisation theorem of bounded self-adjoint 
	Hankel operators~\cite[Theorem 1]{Megretskii_et_al} implies that 
	$\1_{(-\infty,\vartheta)}(A_0) - \1_{(-\infty,\vartheta)}(A_1)$
	is always unitarily equivalent to a 
	Hankel integral operator on $\Lp[2]{\R_{+}}$.
\end{remark}
\begin{example}[Classical half-space]
	If $ L $ is the free Laplacian on $ \mathbb{R}^{n-1} $ 
	for some $ n \geq 2 $ then the difference of the spectral projections 
	associated with $ (-\infty, \vartheta) $ has infinite 
	dimensional kernel, and its
	(absolutely continuous) spectrum equals $ [-1,1] $ 
	and is of infinite multiplicity, 
	for all $0<\vartheta<1$.
\end{example}

\subsection{Structure of the article}

In Section~\ref{sec:tools} we briefly present the main tool of our
analysis, namely the concept of boundary pairs, some facts on the
tensor product of operators, and the von Neumann direct integral
decomposition of a self-adjoint operator.  In
Section~\ref{sec:bd2.half-space} we apply the theory of boundary pairs
to our example and calculate the related objects explicitly.  In
particular, we establish
Theorem~\ref{thm:main1}~\eqref{thm:main1__Teil(a)}.
Section~\ref{sec:main.thm.1} contains the proof of
Theorem~\ref{Resolventendifferenz_unitaer_aequivalent}.  In
Section~\ref{sec:main.thm.2} we establish
Theorem~\ref{thm:main1}~\eqref{thm:main1__Teil(b)} and
Theorem~\ref{Ergebnis_I_Differenz_der_Spektralprojektoren}.

\subsection*{Acknowledgements}
CU would like to thank his PhD supervisor, Vadim Kostrykin, for many
fruitful discussions.  OP would like to thank Vadim Kostrykin for the
hospitality at the Johannes Gutenberg University Mainz, where this
project has been started.  The authors would like to thank André
Froehly for the reference of \cite[Theorem
8.34]{Weidmann}. Furthermore, the authors would like to thank Vladimir
Lotoreichik and Konstantin Pankrashkin for the
reference~\cite{Malamud_Neidhardt}.

%
\section{Tools}
\label{sec:tools}
%

\subsection{Boundary pairs}
\label{subsec:bd2}

Let us briefly explain the concept of boundary pairs which is
basically an abstract version of boundary value problems for elliptic
operators defined via their quadratic forms.  Details can be found
in~\cite{Post_I}.

Let $\HS$ be a Hilbert space and $\qf h$ a closed and densely defined
quadratic form with domain $\HS^1=\dom (\qf h)$ (i.\,e., $\HS^1$ with
its intrinsic norm defined by $\normsqr[\qf h] u = \qf h(u) +
\normsqr[\HS] u$ is complete).

A \emph{boundary pair} $(\Gamma,\HSaux$) associated with $\qf h$ is a
pair given by another Hilbert space $\HSaux$ and a bounded map $\map
\Gamma {\HS^1} \HSaux$ such that the kernel (null space) $\ker
(\Gamma)$ is dense in $\HS$ and such that the range
$\HSaux^{1/2}=\ran (\Gamma)$ is dense in $\HSaux$.

Given a boundary pair $(\Gamma,\HSaux)$ associated with $\qf h$, we
can define the following objects:
\begin{myitemize}
\item the \emph{(abstract) Neumann operator} $H$ as the operator associated
  with the closed form $\qf h$;
\item the \emph{(abstract) Dirichlet operator} $\HDir$ as the operator associated
  with the closed form $\qf h \restr {\ker \Gamma}$;
\item the \emph{space of weak solutions} $\LS^1(z)=\set{h \in \HS^1}{\qf
  h(h,f)=z\iprod h f \; \forall f \in \ker \Gamma=\HS^{1,\Dir}}$;
\item for $z \notin \spec \HDir$, $\HS^1=\HS^{1,\Dir} \dplus \LS^1(z)$
  (direct sum with closed subspaces); in particular, the
  \emph{(Dirichlet) solution operator} $\map{S(z)=(\Gamma
    \restr{\LS^1(z)})^{-1}}{\ran \Gamma=\HSaux^{1/2}}{\LS^1(z)\subset
    \HS^1}$ is defined;
\item for $z \notin \spec \HDir$, the \emph{Dirichlet-to-Neumann
    (sesquilinear) form} $\qf l_z$ is defined via $\qf
  l_z(\phi,\psi)=(\qf h-z\qf 1)(S(z)\phi,S(-1)\psi)$, $\phi,\psi \in
  \HSaux^{1/2}$;
\item we endow $\HSaux^{1/2}$ with the norm given by
  $\normsqr[\HSaux^{1/2}] \phi = \qf l_{-1}(\phi)=\normsqr[\qf h]{S
  \phi}$.
\end{myitemize}
We say that a boundary pair $(\Gamma,\HSaux)$ associated with $\qf h$
is \emph{elliptically regular} if the associated Dirichlet solution
operator $\map{S=S(-1)}{\HSaux^{1/2}}{\HS^1}$ extends to a bounded
operator $\map {\bar S} \HSaux \HS$, or equivalently, if there exists
a constant $c>0$ such that $\norm[\HS]{S \phi} \le c \norm[\HSaux]
\phi$ for all $\phi \in \HSaux^{1/2}$.  We call $\bar S$ the
\emph{extended solution operator.}  For an elliptic boundary pair, the
Dirichlet-to-Neumann form $\qf l_z$ is sectorial, and the associated
operator, the \emph{Dirichlet-to-Neumann operator} $\Lambda(z)$ has
domain \emph{independent} of $z$.

The main example is the following: let $X$ be an open subset of $\R^n$
with smooth boundary $Y=\bd X$.  Let $\HS=\Lsqr X$, $\qf
h(u)=\int_X\abssqr{\nabla u(x)} \dd x$, $\dom(\qf h)=\Sob X$.
Moreover, let $\Gamma u= u \restr Y$, i.\,e., $\Gamma$ is the (Sobolev)
trace map.  Under suitable conditions (e.g.\ $Y$ is compact or some
curvature assumptions of $Y$), $\map \Gamma {\Sob X} {\Lsqr Y}$ is
bounded, where we consider $Y$ as Riemannian manifold with its natural
$(n-1)$-dimensional measure.  In our example above we have $X=\R_+^n$
and $Y=\{0\} \times \R^{n-1}$.  Then $H$ resp.\ $\HDir$ is the
Neumann resp.\ Dirichlet Laplacian; $\LS^1(z)$ the space of weak
solutions of $(-\Delta-z) h=0$ with $h \in \Sob X$; $S(z)$ is the
solution operator, associating to $\phi \in \ran(\Gamma)$ the weak
solution $h$ with $\Gamma h=\phi$.  Moreover, $\Lambda(z)$ is the
classical Dirichlet-to-Neumann operator, associating to a boundary
function $\map \phi Y \C$ the normal derivative of the function $h \in
\LS^1(z)$ with $\Gamma h=\phi$.

For elliptic boundary pairs, we have the following Krein-type formula
\begin{equation*}
  \RNeu(z) - \RDir(z)
  = \bar{S}(z) \Lambda(z)^{-1} \bar{S}(\conj z)^*,
\end{equation*}
(see~\cite[Thm.~4.2~(ii)]{Post_I}), where $\RNeu(z)=(\HNeu-z)^{-1}$
and $\RDir(z)=(\HDir-z)^{-1}$ are the resolvents of the Neumann resp.\
Dirichlet operator.

\subsection{Tensor product of operators}
\label{subsec:tensor}

In this subsection we fix some notation and briefly discuss how 
a result from~\cite{Schmuedgen} about cores for certain 
self-adjoint product type operators carries over 
to the forms associated with these operators; furthermore, 
we present three facts on operators of this product type. 

Let $ T_{k} \geq 0 $ be a self-adjoint operator on a 
complex  Hilbert space $ \mathcal{H}_{k} $ with 
domain $ \dom(T_{k}) $, where $ k=1,2 $. 
We write $ \mathcal{H}_{1} \otimes \mathcal{H}_{2} $ for the 
usual Hilbert space tensor product and 
$ \mathcal{H}_{1} \odot \mathcal{H}_{2} $ for the 
algebraic tensor product of 
$ \mathcal{H}_{1} $ and $ \mathcal{H}_{2} $.

Let $ T \in \{ T_{1}, T_{2} \} $.  Recall
(see~\cite[p.\,145]{Schmuedgen}) that a vector $ f \in
\bigcap_{m=1}^{\infty} \dom(T^{m}) $ is called \emph{bounded for $T$}
if there exists a constant $ B_f > 0 $ such that $ \| T^{m} f \|
\leq B_f^{m} $ for every $ m \in \mathbb{N} $.  In this case we
write $ f \in \mathcal D^{\mathrm b}(T) $.

It follows from~\cite[Theorem 7.23]{Schmuedgen} 
and~\cite[Exercise 17.a]{Schmuedgen}
that the operator $ T_{1} \otimes \id + \id \otimes T_{2} $ is
self-adjoint and that the subspace
\begin{align}	\label{Definition_des_Cores_Db__abstrakt}
	\mathcal D_{\mathrm b} 
	= \Span 
	\{ f_1 \otimes f_2 : f_1 \in \mathcal D^{\mathrm b}(T_{1}), f_2 \in \mathcal D^{\mathrm b}(T_{2}) \}
\end{align}
of $ \mathcal{H}_{1} \otimes \mathcal{H}_{2} $ is an invariant core.
Since $ T_{1} \geq 0 $ and $ T_{2} \geq 0 $ we have
$ T_{1} \otimes \id + \id \otimes T_{2} \geq 0 $.
The following proposition is an easy consequence 
of~\cite[Theorem 7.23]{Schmuedgen} and will be 
very useful.
\begin{proposition}
  \label{prp:core.db}
	The subspace $ \mathcal D_{\mathrm b} $ of $ \mathcal{H}_{1} \otimes \mathcal{H}_{2} $
	defined in~\eqref{Definition_des_Cores_Db__abstrakt}
	is a core for the form associated with the self-adjoint operator
	$ T_{1} \otimes \id + \id \otimes T_{2} $.
\end{proposition}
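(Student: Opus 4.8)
The plan is to deduce the form-core property from the fact, already recorded above via \cite[Theorem 7.23]{Schmuedgen} and \cite[Exercise 17.a]{Schmuedgen}, that $\mathcal D_{\mathrm b}$ is an \emph{invariant core for the operator} $A := T_1 \otimes \id + \id \otimes T_2$ itself, invoking the general principle that every operator core of a nonnegative self-adjoint operator is also a core for the associated form. Since $A \geq 0$ is self-adjoint, the associated form $\qf a$ has domain $\dom(\qf a) = \dom(A^{1/2})$ and form norm $\normsqr[+]{u} = \qf a(u) + \normsqr{u} = \normsqr{A^{1/2} u} + \normsqr{u}$; recall that a subspace is a core for $\qf a$ precisely when it is dense in $\dom(A^{1/2})$ with respect to $\norm[+]{\cdot}$.

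First I would compare the form norm with the graph norm of $A$ on $\dom(A)$. For $u \in \dom(A)$ the identity $\normsqr{A^{1/2}u} = \iprod{Au}{u}$ together with the Cauchy--Schwarz and Young inequalities gives $\normsqr{A^{1/2}u} \leq \tfrac12(\normsqr{Au} + \normsqr{u})$, so that
\begin{equation*}
  \normsqr[+]{u} = \normsqr{A^{1/2}u} + \normsqr{u}
  \leq \tfrac32\bigl(\normsqr{Au} + \normsqr{u}\bigr).
\end{equation*}
Hence on $\dom(A)$ the graph norm of $A$ dominates the form norm, and in particular convergence in the graph norm implies convergence in the form norm.

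The conclusion then follows by transitivity of density. Because $\mathcal D_{\mathrm b}$ is a core for $A$, it is dense in $\dom(A)$ in the graph norm, and hence, by the estimate above, dense in $\dom(A)$ in the form norm. On the other hand, $\dom(A)$ is always dense in $\dom(A^{1/2})$ in the form norm: for $u \in \dom(A^{1/2})$ the truncations $u_n = \1_{[0,n]}(A)u$ lie in $\dom(A)$ and satisfy $\norm[+]{u_n - u} \to 0$ by the spectral theorem applied to $(A+1)^{1/2}$. Combining the two density statements shows that $\mathcal D_{\mathrm b}$ is dense in $\dom(A^{1/2})$ in the form norm, that is, a core for $\qf a$. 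I do not expect any genuine obstacle here: the entire content is the elementary norm comparison together with the standard ``operator core $\Rightarrow$ form core'' fact, which is exactly why the statement is an \emph{easy} consequence of the cited core result for the operator $A$.
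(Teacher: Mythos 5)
Your proof is correct and follows essentially the same route as the paper: reduce the form-core property to density with respect to the graph norm of $(T_1 \otimes \id + \id \otimes T_2)^{1/2}$, observe that the operator graph norm dominates the form norm (your Cauchy--Schwarz/Young step is the paper's functional-calculus inequality $\lambda \leq 1+\lambda^2$ in disguise), and conclude by transitivity using that the operator domain is a form core. The only difference is that you prove inline, via spectral truncations, the density of $\dom(A)$ in $\dom(A^{1/2})$, which the paper simply cites from Schm\"udgen.
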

\begin{proof}
	For brevity let us write 
	$ H = T_{1} \otimes \id + \id \otimes T_{2} $
	and $ \mathcal{H} = \mathcal{H}_{1} \otimes \mathcal{H}_{2} $. 
	It suffices to show that $ \mathcal D_{\mathrm b} $ is a core for the self-adjoint 
	operator $ H^{1/2} $,
	see~\cite[Proposition 10.5]{Schmuedgen}.
	
	It is well known (see, e.\,g.~\cite[Corollary 4.14]{Schmuedgen}) 
	that the domain of $ H $ is a core for $ H^{1/2} $.
	Let $ x \in \dom(H) $. Since $ \mathcal D_{\mathrm b} $ is a core for $ H $ 
	we can choose a sequence 
	$ (x_{m}) \subset \mathcal D_{\mathrm b} $ such that 
	$ x_{m} \rightarrow x $ in $ \mathcal{H} $
	and $ H x_{m} \rightarrow H x $ 
	in $ \mathcal{H} $ as $ m \rightarrow \infty $. 
	It follows directly from the functional calculus for 
	self-adjoint operators and the obvious 
	inequality $ \lambda \leq 1 + \lambda^{2} $ 
	for all $ \lambda \in \mathbb{R} $ that 
	$ H^{1/2} x_{m} \rightarrow H^{1/2} x $ 
	in $ \mathcal{H} $ as $ m \rightarrow \infty $. 
	Consequently $ \mathcal D_{\mathrm b} $ is a core for $ H^{1/2} $, as claimed.
\end{proof}
Here are three more facts on operators of the type 
$ T_{1} \otimes \id + \id \otimes T_{2} $.
\begin{proposition}	\label{Allgemeine_Fakten_zum_Spektrum_von_H}
	Let, as above, $ T_{1} $ and $ T_{2} $ be nonnegative self-adjoint operators.  
	\begin{enumerate}
		\item $ \sigma(T_{1} \otimes \id + \id \otimes T_{2}) 
					= \{ t_{1} + t_{2} : t_{k} \in \sigma(T_{k}), ~ k=1,2 \} $.
		\item \label{Weimann-Formel}
			For all $\alpha \in \R$, all $f_1,g_1 \in \mathcal{H}_1 $, and 
			all $f_2,g_2 \in \mathcal{H}_2 $ one has
			\begin{align*}
				&\langle \1_{(-\infty, \alpha)}(T_1 \otimes \id + \id \otimes T_2) 
				(f_1 \otimes f_2), g_1 \otimes g_2 
				\rangle_{\mathcal{H}_1 \otimes \mathcal{H}_2}	\\
				&= \int_{-\infty}^{\infty} \langle 
				\1_{(-\infty, \alpha - \lambda)}(T_1)f_1, g_1 \rangle_{\mathcal{H}_1}
				\dd \langle \1_{\lambda}(T_2)f_2, g_2 \rangle.
			\end{align*}
			\item The operator $ T_{1} \otimes \id + \id \otimes T_{2} $ has a purely 
					absolutely continuous spectrum if $ T_{1} $ has a purely absolutely 
					continuous spectrum.
	\end{enumerate}
\end{proposition}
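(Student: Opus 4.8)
The plan is to extract the scalar spectral measure of $H = T_1 \otimes \id + \id \otimes T_2$ on elementary tensors directly from part~\eqref{Weimann-Formel} and to recognise it as a convolution. Fix $f \in \mathcal{H}_1$ and $g \in \mathcal{H}_2$ and write $\mu_{f\otimes g}(B) = \iprod{\1_B(H)(f\otimes g)}{f\otimes g}$ for the spectral measure of $H$ associated with $f\otimes g$, and $\rho_f(B) = \iprod{\1_B(T_1)f}{f}$, $\nu_g(B) = \iprod{\1_B(T_2)g}{g}$ for the (finite) spectral measures of $T_1$ and $T_2$. Specialising part~\eqref{Weimann-Formel} to $f_1=g_1=f$, $f_2=g_2=g$ gives, for every $\alpha\in\R$,
\[
  \mu_{f\otimes g}\big((-\infty,\alpha)\big)
  = \int_{-\infty}^{\infty}
    \rho_f\big((-\infty,\alpha-\lambda)\big)\dd\nu_g(\lambda).
\]
The right-hand side is precisely the left-continuous distribution function of the convolution $\rho_f \ast \nu_g$; since a finite Borel measure on $\R$ is determined by this distribution function, I conclude $\mu_{f\otimes g} = \rho_f \ast \nu_g$.

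Second, I would transfer absolute continuity through the convolution. By hypothesis $T_1$ is purely absolutely continuous, so $\rho_f$ has a density $p_f \in \Lp[1]\R$. For any Lebesgue null set $B$ one has
\[
  (\rho_f \ast \nu_g)(B)
  = \int \Big( \int \1_B(s+\lambda)\, p_f(s)\dd s \Big) \dd\nu_g(\lambda),
\]
and the inner integral equals $\int_{B-\lambda} p_f \dd s = 0$ for every $\lambda$, because $B-\lambda$ is again a null set. As $\nu_g$ is finite this forces $\mu_{f\otimes g}(B)=0$, so $\mu_{f\otimes g}$ is absolutely continuous; equivalently $f \otimes g \in \mathcal{H}_{\mathrm{ac}}(H)$.

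Finally I would pass from elementary tensors to all of $\mathcal{H}_1 \otimes \mathcal{H}_2$. Here one uses that the absolutely continuous subspace $\mathcal{H}_{\mathrm{ac}}(H)$ is a closed linear subspace: linearity follows from the Cauchy--Schwarz bound $\bigabs{\iprod{\1_B(H)\psi}{\phi}} \le \mu_\psi(B)^{1/2}\mu_\phi(B)^{1/2}$, which shows the mixed terms in $\mu_{\psi+\phi}$ vanish on Lebesgue null sets whenever $\mu_\psi$ and $\mu_\phi$ do. Since every elementary tensor lies in $\mathcal{H}_{\mathrm{ac}}(H)$ and the algebraic tensor product $\mathcal{H}_1 \odot \mathcal{H}_2$ is dense, closedness yields $\mathcal{H}_{\mathrm{ac}}(H) = \mathcal{H}_1 \otimes \mathcal{H}_2$, i.e. $H$ is purely absolutely continuous.

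The routine parts are the convolution identity and the null-set estimate; the point that genuinely needs care is the last step, namely that part~\eqref{Weimann-Formel} only controls elementary tensors, so the argument hinges on knowing that $\mathcal{H}_{\mathrm{ac}}(H)$ is a closed subspace (via the mixed-measure Cauchy--Schwarz bound) in order to upgrade the statement from a dense set of vectors to the whole space. An alternative route would diagonalise $T_2$ on a von Neumann direct integral and argue fibrewise that $T_1+\lambda$ stays purely absolutely continuous, but that approach must additionally justify that the direct integral creates no singular spectrum, which I find less transparent than the convolution argument above.
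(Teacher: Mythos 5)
Your convolution argument for part~(c) is mathematically sound, and it is a genuinely different route from the paper's: the paper disposes of all three assertions purely by citation --- \cite[Corollary 7.25]{Schmuedgen} and \cite[Exc.~18.a]{Schmuedgen} for~(a), \cite[Thm~8.34]{Weidmann} for~(b), and \cite[Prp~A.2~(iv)]{Malamud_Neidhardt} for~(c) --- whereas you actually reconstruct the content of the last citation. The three steps are all correct: specialising~(b) to $f_1=g_1=f$, $f_2=g_2=g$ identifies $\mu_{f\otimes g}$ with $\rho_f \ast \nu_g$ (finite Borel measures on $\R$ being determined by the distribution functions $\alpha \mapsto \mu((-\infty,\alpha))$); the convolution of an absolutely continuous measure with a finite measure is absolutely continuous; and the passage from elementary tensors to all of $\mathcal{H}_1 \otimes \mathcal{H}_2$ works because the absolutely continuous subspace of a self-adjoint operator is a closed linear subspace containing the dense set $\mathcal{H}_1 \odot \mathcal{H}_2$. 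Your derivation has the merit of making transparent exactly where the absolute continuity of $T_1$ enters, which the paper's citation hides.

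The gap is one of completeness: you have proved only one of the three assertions. Part~(a) is never addressed, and part~(b) --- on which your entire argument for~(c) rests --- is taken as given rather than proved. Deriving~(c) from~(b) is of course legitimate inside a multi-part proposition, but then~(b) itself needs a proof or at least a citation, and for unbounded $T_1$, $T_2$ it is not a formality: it amounts to constructing the joint spectral measure of the commuting spectral measures of $T_1 \otimes \id$ and $\id \otimes T_2$ and identifying the spectral measure of their sum (self-adjoint by \cite[Theorem 7.23]{Schmuedgen}) with the image of the product measure under addition; this is precisely the content of \cite[Thm~8.34]{Weidmann}, which the paper cites. Part~(a) likewise needs its own argument (for instance, Weyl sequences of the form $u_n \otimes v_n$ for one inclusion and the joint spectral calculus for the other; note also that the proposition asserts equality with the sumset itself, not its closure, which uses that for nonnegative operators the sumset of the two closed spectra is automatically closed). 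As it stands, the proposal is a correct proof of~(c) conditional on~(b), with~(a) and~(b) left open.
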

\begin{proof}
  Part~(a) follows from~\cite[Corollary 7.25]{Schmuedgen}
  and~\cite[Exc.~18.a]{Schmuedgen}; for Part~(b),
  see~\cite[Thm~8.34]{Weidmann} and for Part~(c)
  see~\cite[Prp~A.2~(iv)]{Malamud_Neidhardt}.
\end{proof}

\subsection{The von Neumann direct integral}
\label{subsec:dir.int}

The theory of von Neumann direct integrals is one of the main tools 
in this paper; for a theoretical background we refer to
\cite[Chapter 7]{Birman_Solomyak}. 
In this subsection we fix some notation and discuss how 
the theory of von Neumann direct integrals can be applied in our example. 

Given a positive finite Borel measure $ \mu $ on $ \mathbb{R} $ we
denote the von Neumann direct integral of separable complex Hilbert
spaces $ \mathcal{G}(\lambda) $ by $ \mathcal{G} =
\int_{\mathbb{R}}^{\oplus} \mathcal{G}(\lambda) \dd \mu(\lambda) $.
Any element $ \phi \in \mathcal{G} $ takes values $ \phi(\lambda) \in
\mathcal{G}(\lambda) $ for $\dd \mu$-almost all $\lambda \in \sigma$.
We will use the notation $\phi = \int_{\mathbb{R}}^{\oplus}
\phi(\lambda) \dd \mu(\lambda)$.  The von Neumann direct integral $
\mathcal{G} $ together with the inner product
\begin{align*}
	\langle \phi_{1}, \phi_{2} \rangle_{\mathcal{G}}
	= \int_{\mathbb{R}} \langle \phi_{1}(\lambda), \phi_{2}(\lambda) \rangle
	_{\mathcal{G}(\lambda)} \dd \mu(\lambda),
	\quad 
	\phi_{1},\phi_{2} \in \mathcal{G},
\end{align*}
is a Hilbert space. The induced norm is denoted by 
$ {\|} {\bullet} {\|}_{\mathcal{G}} $.
We assume without loss of generality that 
$ \mathcal{G}(\lambda) \neq \{ 0 \} $ for $ \dd \mu $-almost 
every $ \lambda $. 
Further we identify the Hilbert spaces 
$ \int_{\mathbb{R}}^{\oplus} \mathcal{G}(\lambda) \dd \mu(\lambda) $ 
and 
$ \int_{\supp(\mu)}^{\oplus} \mathcal{G}(\lambda) 
\dd \mu(\lambda) $, where $ \supp(\mu) $ denotes the support 
of the measure $ \mu $.
We will make use of the following well-known fact:
\begin{theorem}[{\cite[Theorem 1, p.\,177]{Birman_Solomyak}}]
	\label{Theorem_von_Neumann_direct_integral}
	Every self-adjoint operator on a separable complex Hilbert space is 
	unitarily equivalent to the multiplication operator by 
	the independent variable on a von Neumann direct integral.
\end{theorem}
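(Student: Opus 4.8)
The plan is to derive the statement from the spectral theorem together with the Hahn--Hellinger multiplicity theory. Let $A$ be a self-adjoint operator on a separable complex Hilbert space $\HS$, and let $E$ be its projection-valued spectral measure, so that $A=\int_\R\lambda\dd E(\lambda)$. The goal is to produce a finite Borel measure $\mu$ on $\R$, a measurable field $\lambda\mapsto\mathcal{G}(\lambda)$ of Hilbert spaces, and a unitary $U\colon\HS\to\int_\R^\oplus\mathcal{G}(\lambda)\dd\mu(\lambda)$ such that $UAU^*$ is multiplication by the independent variable.

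First I would reduce to the cyclic case. For $\phi\in\HS$ let $\HS_\phi:=\clo{\Span\set{E(B)\phi}{B\subset\R\text{ Borel}}}$ be the smallest $E$-invariant closed subspace containing $\phi$, and let $\mu_\phi(B)=\iprod{E(B)\phi}{\phi}=\normsqr{E(B)\phi}$ be its scalar spectral measure. The assignment $E(B)\phi\mapsto\1_B$ is isometric on this spanning set (since $\iprod{E(B)\phi}{E(C)\phi}=\mu_\phi(B\cap C)$ matches the $\Lsqr{\R,\mu_\phi}$ inner product) and hence extends to a unitary $U_\phi\colon\HS_\phi\to\Lsqr{\R,\mu_\phi}$ intertwining $A\restr{\HS_\phi}$ with multiplication by $\lambda$. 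Exhausting $\HS$ by successively picking unit-normalised vectors in the orthogonal complements of the cyclic subspaces already chosen, separability forces an at most countable orthogonal decomposition $\HS=\bigoplus_n\HS_{\phi_n}$; after rescaling we may assume $\normsqr{\phi_n}\le 2^{-n}$. Thus $A$ is unitarily equivalent to multiplication by $\lambda$ on $\bigoplus_n\Lsqr{\R,\mu_n}$, where $\mu_n:=\mu_{\phi_n}$.

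Next I would package the orthogonal sum into a single direct integral. Put $\mu:=\sum_n\mu_n$, a finite measure, and let $g_n:=\dd\mu_n/\dd\mu\ge 0$ be the Radon--Nikodym derivatives. For $\mu$-almost every $\lambda$ define the fibre $\mathcal{G}(\lambda)$ to be the Hilbert space with orthonormal basis indexed by $\set{n}{g_n(\lambda)>0}$, so its dimension is the local multiplicity function $N(\lambda)=\card{\set{n}{g_n(\lambda)>0}}$. The map sending $(f_n)_n\in\bigoplus_n\Lsqr{\R,\mu_n}$ to the field $\lambda\mapsto\bigl(f_n(\lambda)\,g_n(\lambda)^{1/2}\bigr)_{n:\,g_n(\lambda)>0}$ is, coordinatewise, the isometry $\Lsqr{\R,\mu_n}\to\Lsqr{\{g_n>0\},\mu}$, $f_n\mapsto f_ng_n^{1/2}$; collecting these coordinates yields a unitary onto $\int_\R^\oplus\mathcal{G}(\lambda)\dd\mu(\lambda)$. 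Because multiplication by $\lambda$ acts coordinatewise and commutes with each multiplication by $g_n^{1/2}$, it is transported to multiplication by the independent variable on the direct integral, proving the claim.

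The crux is this third step, i.e.\ turning $\lambda\mapsto\mathcal{G}(\lambda)$ into a genuine measurable field of Hilbert spaces: one must verify that $N$ is measurable, that the proposed coordinate maps patch together measurably, and that the resulting isometry is actually onto the whole direct integral rather than merely into it. This is precisely the content of the Hahn--Hellinger multiplicity theorem. A cleaner bookkeeping, which I would ultimately adopt, is to choose the cyclic vectors so that their spectral measures form a decreasing chain $\mu_1\gg\mu_2\gg\cdots$ with respect to absolute continuity---picking at each stage a vector of maximal spectral type in the orthogonal complement of the subspaces already used---and then to take $\mu:=\mu_1$, reading off $\mathcal{G}(\lambda)$ from the supports of the densities $\dd\mu_n/\dd\mu_1$. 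With either organisation the remaining verifications are routine.
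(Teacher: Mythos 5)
The paper itself contains no proof of this statement: it is quoted as a known result, with the proof delegated to \cite[Theorem~1, p.~177]{Birman_Solomyak}. So the only meaningful comparison is with the standard literature proof, and that is essentially what you reproduce: reduction to cyclic subspaces via the spectral theorem, followed by Radon--Nikodym bookkeeping to assemble the fibres. Your cyclic-case unitary $E(B)\phi\mapsto\1_B$ is correct, and so is your assembly step: $\mu=\sum_n\mu_n$ is finite after the rescaling, each set $\{g_n>0\}$ is Borel, so $N(\lambda)=\sum_n\1_{\{g_n>0\}}(\lambda)$ is measurable and the coordinate sections furnish the measurable field structure; isometry follows from $\int|f_n|^2g_n\dd\mu=\int|f_n|^2\dd\mu_n$, and surjectivity from $\sum_n\int|f_n|^2\dd\mu_n=\int\|\xi(\lambda)\|^2_{\mathcal{G}(\lambda)}\dd\mu(\lambda)$, so these verifications are, as you say, routine.

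There is, however, one step that fails as written: the exhaustion. Successively picking \emph{arbitrary} unit vectors in the orthogonal complements need not yield $\HS=\bigoplus_n\HS_{\phi_n}$ after countably many steps, and separability does not rescue this. Concretely, for multiplication by the independent variable on $\bigoplus_{n\in\N}\Lsqr{[0,1]}$, choosing at stage $n$ the normalised indicator $\1_{[0,1/2]}$ sitting in the $n$-th summand is a legitimate sequence of picks (each lies in the orthogonal complement of the preceding cyclic subspaces), yet the closed span of all the resulting cyclic subspaces has the nonzero orthogonal complement $\bigoplus_{n\in\N}\Lsqr{[1/2,1]}$. The same defect affects your ``cleaner bookkeeping'': in $\Lsqr{[0,1]}\otimes\ell_2(\N)$ the vectors $\phi_n=\1_{[0,1]}\otimes e_{n+1}$ all have maximal (Lebesgue) spectral type in the relevant complements, but their cyclic subspaces never cover $\Lsqr{[0,1]}\otimes\C e_1$; maximality of the spectral type is what produces the decreasing chain $\mu_1\gg\mu_2\gg\cdots$, not exhaustion. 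The standard repair is either to run the induction along a fixed dense sequence $(x_k)$ --- at each stage take as new cyclic vector the component, in the current orthogonal complement, of the first $x_k$ not yet absorbed, which works because the orthogonal complement of an $E$-invariant subspace is $E$-invariant and guarantees that every $x_k$ ends up in the closed span --- or to take a maximal orthogonal family of cyclic subspaces via Zorn's lemma, which is countable by separability and spans $\HS$ by maximality. With that repair, your argument is a complete and correct proof of the cited theorem.
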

Except for Subsection~\ref{sec:expl.formula.bd2} we will suppose
in Sections~\ref{sec:bd2.half-space}--\ref{sec:main.thm.2}: 
\begin{assumption}
	The operator $ L $ in~\eqref{Gen.Neu/Dir}
	acts by multiplication by the independent variable 
	on a von Neumann direct integral 
	$ \mathcal{G} 
	= \int_{\mathbb{R}}^{\oplus} \mathcal{G}(\lambda) \dd \mu(\lambda) 
	\neq \{ 0 \} $.
\end{assumption}
\begin{remark}
	With this assumption we do not forfeit generality. 
	This is clear in view of 
	Theorem~\ref{Resolventendifferenz_unitaer_aequivalent}, 
	Corollary~\ref{corollary_from_main_thm__spectral.properties}, 
	and Theorem~\ref{Ergebnis_I_Differenz_der_Spektralprojektoren}. 
	In view of Theorem~\ref{thm:main1} we will show
	in Subsections~\ref{sec:expl.formula.bd2}
	and~\ref{subsec:main_thm_1.1} below
	that the corresponding results from 
	Proposition~\ref{Darstellungsformel_fuer_die_Differenz_der_Resolventen} 
	and Lemma~\ref{Spektr.proj.diff.__L_Mult.op.}
	naturally carry over to the situation when $ L $ is not 
	necessarily a multiplication operator.
\end{remark}

%
\section{The boundary pair of the generalised half-space problem}
\label{sec:bd2.half-space}
%

Let $\mathcal G$ be a (non-trivial) separable Hilbert space and $
\mathcal{H} = \mathsf L_{2}(\mathbb{R}_{+}, \mathcal{G})$.  As
$\mathcal H$ and $ \mathsf L_{2}(\mathbb{R}_{+}) \otimes \mathcal{G} $
are naturally isometrically isomorphic, we will very often identify $
\psi({\bullet}) \phi$ with $ \psi \otimes \phi $ for all $ \psi \in
\Lsqr {\mathbb{R}_{+}} $ and $ \phi \in \mathcal{G} $.
\subsection{The form and its associated operator}
\label{subsec:neu.form}

Let us consider the nonnegative form $ \mathfrak{h} $ on 
$	\mathcal{H}^{1} 
	= \Sob{\R_{+}, \mathcal{G}} 
	\cap \Lp[2]{\R_{+}, \dom(L^{1/2})} $
defined by 
\begin{align*}
	\mathfrak{h}(u)
	= \int_{\R_{+}} \bigl( \| u^{\prime}(t) \|_{\mathcal{G}}^{2}
	+ \| L^{1/2} (u(t)) \|_{\mathcal{G}}^{2} \bigr) \dd t,
\end{align*}
where $ \dom(L^{1/2}) $ is equipped with the graph norm of $ L^{1/2} $.
It is easy to see that $ \mathfrak{h} $ is closed. 

Let $H$ be the self-adjoint operator 
\begin{equation*}
  H
	=	\Big( {-} \frac{\dd^{2}}{\dd t^{2}} \Big)^{\! \mathrm{N}}
	\otimes \id 
	+ \id \otimes L
	\quad
	\text{on } 
	\mathsf L_{2}(\mathbb{R}_{+}) \otimes \mathcal{G}.
\end{equation*}
Using the above-mentioned identification of $\mathcal H= \mathsf
L_{2}(\mathbb{R}_{+}, \mathcal{G})$ with $\mathsf
L_{2}(\mathbb{R}_{+}) \otimes \mathcal{G}$, one can show that
\begin{align*}
	\dom(H) 
	= \{ u \in \Sob[2]{\R_{+}, \mathcal{G}} 
	\cap \Lp[2]{\R_{+}, \dom(L)} : u^{\prime}(0) = 0 \},
\end{align*}
see~\cite[Proposition 5.2]{Malamud_Neidhardt}.

\begin{lemma}
  The operator $H$ is associated with the form $ \mathfrak{h} $.
\end{lemma}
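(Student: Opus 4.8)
The plan is to invoke the first representation theorem for closed nonnegative forms (see~\cite{Schmuedgen}): since $\mathfrak{h}$ is closed, nonnegative and densely defined, it is the form of a unique self-adjoint operator $\wt H$, and it suffices to show that the form $\mathfrak{h}_H$ associated with $H$ — that is, $\dom(\mathfrak{h}_H)=\dom(H^{1/2})$ and $\mathfrak{h}_H(u)=\normsqr{H^{1/2}u}$ — coincides with $\mathfrak{h}$. Putting $T_1=\bigl({-}\frac{\dd^2}{\dd t^2}\bigr)^{\Neu}$ on $\Lp[2]{\R_+}$ and $T_2=L$ on $\mathcal{G}$, we have $H=T_1\otimes\id+\id\otimes T_2$, so by Proposition~\ref{prp:core.db} the subspace $\mathcal D_{\mathrm b}$ from~\eqref{Definition_des_Cores_Db__abstrakt} is a core for $\mathfrak{h}_H$.

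First I would compare the two forms on $\mathcal D_{\mathrm b}$. For an elementary tensor $f_1\otimes f_2$ with $f_1\in\mathcal D^{\mathrm b}(T_1)$ and $f_2\in\mathcal D^{\mathrm b}(T_2)$ one has $f_1\in\dom(T_1)$, so $f_1'(0)=0$ and integration by parts gives $\int_{\R_+}|f_1'(t)|^2\dd t=\iprod{T_1 f_1}{f_1}$ with no boundary contribution, while $\normsqr{L^{1/2}f_2}=\iprod{L f_2}{f_2}$ since $f_2\in\dom(L)$. Evaluating $\mathfrak{h}$ on $u(t)=f_1(t)f_2$ thus yields $\mathfrak{h}(f_1\otimes f_2)=\normsqr{f_2}\iprod{T_1 f_1}{f_1}+\normsqr{f_1}\iprod{L f_2}{f_2}=\iprod{H(f_1\otimes f_2)}{f_1\otimes f_2}=\mathfrak{h}_H(f_1\otimes f_2)$. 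By polarisation the two sesquilinear forms agree on all of $\mathcal D_{\mathrm b}$; in particular the form norms of $\mathfrak{h}$ and $\mathfrak{h}_H$ coincide there.

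It then remains to show that $\mathcal D_{\mathrm b}$ is dense in $\mathcal{H}^1=\dom(\mathfrak{h})$ for the form norm $\normsqr[\mathfrak{h}]{\bullet}=\mathfrak{h}(\bullet)+\normsqr{\bullet}$, i.e.\ that $\mathcal D_{\mathrm b}$ is a core for $\mathfrak{h}$ as well. I would argue in two steps: (i) elementary tensors $\psi\otimes\varphi$ with $\psi\in\Sob{\R_+}$ and $\varphi\in\dom(L^{1/2})$ are dense in $\mathcal{H}^1$ for the form norm, by the standard approximation by finite tensor sums in the intersection space $\Sob{\R_+,\mathcal{G}}\cap\Lp[2]{\R_+,\dom(L^{1/2})}$; (ii) the bounded vectors $\mathcal D^{\mathrm b}(T_k)$ form a core for the form of $T_k$, because $\1_{[0,n]}(T_k)g\in\mathcal D^{\mathrm b}(T_k)$ and $\1_{[0,n]}(T_k)g\to g$ in the graph norm of $T_k^{1/2}$ as $n\to\infty$. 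Approximating $\psi$ and $\varphi$ separately and using that the form norm factorises on a single tensor then gives the claim.

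Finally, both $\mathfrak{h}$ and $\mathfrak{h}_H$ are closed forms that agree on the common core $\mathcal D_{\mathrm b}$, on which their form norms coincide; passing to closures forces $\mathfrak{h}=\mathfrak{h}_H$, so $H$ is exactly the operator associated with $\mathfrak{h}$. I expect the only genuine obstacle to be the density statement in steps~(i)--(ii): one must carry out the approximation \emph{simultaneously} in the $\Sob{\R_+,\mathcal{G}}$-direction and in the $\dom(L^{1/2})$-direction, i.e.\ in the intersection topology, and not merely in one factor at a time.
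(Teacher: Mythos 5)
Your strategy is genuinely different from the paper's, and its overall logic is sound, but as written it contains one real gap and one small slip.

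The paper's proof is much shorter and runs the other way around: it quotes the explicit description $\dom(H)=\{u\in\Sob[2]{\R_+,\mathcal G}\cap\Lp[2]{\R_+,\dom(L)}\,:\,u^{\prime}(0)=0\}$ from Malamud--Neidhardt (cited just before the lemma), verifies by a single integration by parts that $\mathfrak h(u,v)=\langle Hu,v\rangle_{\mathcal H}$ for \emph{every} $u\in\dom(H)$ and \emph{every} $v\in\mathcal H^{1}$, and then concludes from self-adjointness: this identity says precisely that $H\subset\wt H$, where $\wt H$ is the self-adjoint operator associated with $\mathfrak h$, and a self-adjoint operator has no proper self-adjoint extension. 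Your route instead identifies the two \emph{closed forms} via the common core $\mathcal D_{\mathrm b}$, using Proposition~\ref{prp:core.db}. What your approach buys is independence from the quoted domain characterisation of $H$; what it costs is that you must prove $\mathcal D_{\mathrm b}$ is a core for $\mathfrak h$ as well, and that is exactly the step you dismiss as ``standard''.

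That step, your (i), is the genuine gap: density of finite sums of elementary tensors in the intersection space $\mathcal H^{1}=\Sob{\R_+,\mathcal G}\cap\Lp[2]{\R_+,\dom(L^{1/2})}$ with respect to the form norm is not a quotable fact --- it is the actual content of the lemma in your formulation, and, as you yourself note, factor-by-factor approximation fails when $L$ is unbounded. It can be repaired along the following lines: for $u\in\mathcal H^{1}$ put $u_n(t)=\1_{[0,n]}(L)u(t)$; dominated convergence gives $u_n\to u$ in the form norm, and on the range of $\1_{[0,n]}(L)$ the operator $L$ is bounded, so there the form norm is equivalent to the norm of $\Sob{\R_+,\1_{[0,n]}(L)\mathcal G}$, where the usual tensor approximation applies; moreover every vector in $\1_{[0,n]}(L)\mathcal G$ is automatically a bounded vector for $L$, and your truncation $\1_{[0,m]}(T_1)\psi$ handles the first factor. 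Without an argument of this kind the proof is incomplete. The small slip is in your first step: you check agreement of the \emph{quadratic} forms only at elementary tensors, and a sum of elementary tensors is in general not an elementary tensor, so polarisation does not yet yield agreement of the sesquilinear forms on all of $\mathcal D_{\mathrm b}$. The fix is trivial: perform the same integration by parts directly for a pair $(f_1\otimes f_2,\,g_1\otimes g_2)$ --- the boundary term vanishes because $f_1^{\prime}(0)=0$ --- and extend by sesquilinearity.
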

\begin{proof}
	For all $ u \in \dom(H) $ and
	all $ v \in \mathcal{H}^{1} $ we have
	\begin{align*}
		\mathfrak{h}(u,v)
		&= \int_{\R_{+}}
		\{ \langle u^{\prime}(t), v^{\prime}(t) \rangle_{\mathcal{G}}
		+ \langle L^{1/2}(u(t)), L^{1/2}(v(t)) \rangle_{\mathcal{G}}
		\} \dd t	\\
		&= \int_{\R_{+}}
		\{ \langle -u^{\prime\prime}(t), v(t) \rangle_{\mathcal{G}}
		+ \langle L(u(t)), v(t) \rangle_{\mathcal{G}}
		\} \dd t
		= \langle Hu, v \rangle_{\mathcal{H}},
	\end{align*}
	where we used integration by parts and the self-adjointness 
	of $ L^{1/2} $.
	Since $ H $ is self-adjoint the claim follows.
\end{proof}
Recall that
\begin{align*}	
	\mathcal D_{\mathrm b} 
	= \Span 
	\{ \psi \otimes \phi : 
	\psi \in \mathcal D^{\mathrm b} \big( (- \dd^{2} 
	/ \mathrm{dt}^{2})^\Neu \big), 
	\phi \in \mathcal D^{\mathrm b}(L) \}
	\subset \mathsf L_{2}(\mathbb{R}_{+}) \otimes \mathcal{G}
\end{align*}
is a core for $ H $ as well as for $ \mathfrak{h} $ by
Subsection~\ref{subsec:tensor}.

Functions of the type 
\begin{equation}
  \label{eq:def.h}
  h \colon \R_+ \to \mathcal G,
  \qquad
  t \mapsto h(t) =
  \int_{\sigma}^{\oplus} \exp(\im \sqrt{z-\lambda} \, t)
  \varphi(\lambda) \dd\mu(\lambda) 
\end{equation}
will play an important role in this
paper.   Here, $\sqrt z$ is the square root cut along the positive
half-axis.
First of all we have to check that $ h $ is 
in $ \mathcal{H} $ for all $ z \in \mathbb{C} \setminus [\min \sigma ,\infty) $ 
and all $ \varphi \in \mathcal{G} $.
\begin{lemma} \label{Der_fortgesetzte_Loesungsoperator} Let $ z \in
  \mathbb{C} \setminus [\min \sigma ,\infty) $ and let $ \varphi \in
  \mathcal{G} $. Then the function $h \colon \R_+ \to \mathcal G$
  defined in~\eqref{eq:def.h} is continuous and $ h \in \mathcal{H} $.
\end{lemma}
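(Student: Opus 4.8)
The plan is to reduce the whole statement to one analytic fact about the chosen branch of the square root, and then to apply Tonelli's theorem and dominated convergence. First I would record the key observation: for $z \in \mathbb{C}\setminus[\min\sigma,\infty)$ and any $\lambda\ge\min\sigma$ (in particular for every $\lambda\in\sigma$), the point $z-\lambda$ never lies on the branch cut $[0,\infty)$. Indeed, $z-\lambda\in[0,\infty)$ would force $z=\lambda+s$ with $s\ge 0$, hence $z\in[\min\sigma,\infty)$, contradicting the hypothesis. With the square root cut along $[0,\infty)$, writing $z-\lambda=|z-\lambda|\,\e^{\im\theta}$ with $\theta\in(0,2\pi)$ gives $\Im\sqrt{z-\lambda}=|z-\lambda|^{1/2}\sin(\theta/2)>0$. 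Since $\lambda\mapsto\exp(\im\sqrt{z-\lambda}\,t)$ is a continuous scalar factor, the integrand $\lambda\mapsto\exp(\im\sqrt{z-\lambda}\,t)\,\varphi(\lambda)$ is a measurable section, and for all $t\ge0$
\[
  \bigl|\exp\!\bigl(\im\sqrt{z-\lambda}\,t\bigr)\bigr|
  = \exp\!\bigl({-}t\,\Im\sqrt{z-\lambda}\bigr)\le 1 ,
\]
so that $h(t)$ is a well-defined element of $\mathcal G$ with $\|h(t)\|_{\mathcal G}\le\|\varphi\|_{\mathcal G}$.

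The main step, and the only point I expect to present any real difficulty, is a uniform positive lower bound
\[
  c_z := \inf_{\lambda\in\sigma}\Im\sqrt{z-\lambda} > 0 .
\]
To get it I would argue that $\lambda\mapsto\Im\sqrt{z-\lambda}$ is continuous and strictly positive on the closed half-line $[\min\sigma,\infty)\supseteq\sigma$, and that $\Im\sqrt{z-\lambda}\to+\infty$ as $\lambda\to+\infty$: the argument of $z-\lambda$ tends to $\pi$ while its modulus tends to $\infty$, so $\Im\sqrt{z-\lambda}\sim\sqrt{\lambda}$. A continuous, strictly positive function on a closed half-line that diverges at infinity attains a positive minimum on a compact subinterval, whence $c_z>0$. (For real $z<\min\sigma$ this is even more transparent, since then $\Im\sqrt{z-\lambda}=\sqrt{\lambda-z}\ge\sqrt{\min\sigma-z}>0$.) The point to be careful about is precisely this: one must ensure $z-\lambda$ stays away from the cut $[0,\infty)$ uniformly in $\lambda$, which is why the bound depends on the fixed $z$.

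With $c_z>0$ in hand, membership $h\in\mathcal H$ follows from Tonelli's theorem applied to the nonnegative (jointly measurable) integrand:
\[
  \|h\|_{\mathcal H}^2
  = \int_{\mathbb{R}_{+}}\!\int_{\sigma}
      \exp\!\bigl({-}2t\,\Im\sqrt{z-\lambda}\bigr)\,
      \|\varphi(\lambda)\|_{\mathcal G(\lambda)}^2\dd\mu(\lambda)\dd t
  = \int_{\sigma}\frac{\|\varphi(\lambda)\|_{\mathcal G(\lambda)}^2}{2\,\Im\sqrt{z-\lambda}}\dd\mu(\lambda)
  \le \frac{1}{2c_z}\,\|\varphi\|_{\mathcal G}^2 < \infty .
\]
Finally, continuity of $t\mapsto h(t)$ I would obtain from dominated convergence: as $t\to t_0$ the scalar difference $\exp(\im\sqrt{z-\lambda}\,t)-\exp(\im\sqrt{z-\lambda}\,t_0)$ tends to $0$ for every $\lambda\in\sigma$, while the squared integrand is bounded, for all $t\ge0$ simultaneously, by the $\mu$-integrable majorant $4\,\|\varphi(\lambda)\|_{\mathcal G(\lambda)}^2$ (using $|\exp(\im\sqrt{z-\lambda}\,t)|\le1$ from the first step). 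Hence $\|h(t)-h(t_0)\|_{\mathcal G}\to0$, which gives continuity and completes the proof.
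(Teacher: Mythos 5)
Your proof is correct, and its skeleton -- the pointwise bound $\|h(t)\|_{\mathcal G}\le\|\varphi\|_{\mathcal G}$, dominated convergence for continuity, and a Tonelli/Fubini computation for the $\mathcal H$-norm -- is the same as the paper's. The genuine difference lies in how the uniform exponential decay is obtained. You derive $c_z=\inf_{\lambda\in\sigma}\Im\sqrt{z-\lambda}>0$ by a soft argument (continuity and strict positivity of $\lambda\mapsto\Im\sqrt{z-\lambda}$ on $[\min\sigma,\infty)$ plus divergence as $\lambda\to\infty$), whereas the paper uses, implicitly, the identity $\Im\sqrt{w}=\bigl((\lvert w\rvert-\Re w)/2\bigr)^{1/2}$ together with the triangle inequality $\lvert z-\lambda\rvert+\lambda\ge\lvert z\rvert$ (valid since $\lambda\ge 0$) to get the \emph{explicit} bound $\Im\sqrt{z-\lambda}\ge\bigl((\lvert z\rvert-\Re z)/2\bigr)^{1/2}$, leading to $\|h\|_{\mathcal H}^2\le 2^{-1/2}(\lvert z\rvert-\Re z)^{-1/2}\|\varphi\|_{\mathcal G}^2$, which is~\eqref{eq:est.h}. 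That explicit constant is not idle: the paper reuses~\eqref{eq:est.h} at $z=-1$ to obtain $\|S(-1)\varphi\|_{\mathcal H}^2\le\frac12\|\varphi\|_{\mathcal G}^2$, which is exactly what establishes elliptic regularity of the boundary pair; your non-explicit $c_z$ proves boundedness of the solution operator but would not deliver that particular constant. Conversely, your soft bound is more robust at a corner of the hypothesis: if $\min\sigma>0$ and $z\in[0,\min\sigma)$, then $\lvert z\rvert-\Re z=0$ and the paper's displayed estimate degenerates (its right-hand side is infinite), while your argument -- as your parenthetical remark for real $z<\min\sigma$ notes -- still gives $c_z\ge(\min\sigma-z)^{1/2}>0$, so your proof covers the full stated range of $z$. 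One cosmetic point: since writing $\|h\|_{\mathcal H}$ presupposes that $t\mapsto h(t)$ is strongly measurable, the continuity argument should logically precede the Tonelli computation (as it does in the paper); in your write-up it comes last, but as it does not depend on $h\in\mathcal H$, this is an ordering issue, not a gap.
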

\begin{proof}
	For every $ t \in \mathbb{R}_{+} $ one has 
	$ \| h(t) \|_{\mathcal{G}} \leq \| \varphi \|_{\mathcal{G}} 
	< \infty $ so $ h $ is $ \mathcal{G} $-valued. 
	By the dominated convergence theorem we see that 
	$ \mathbb{R}_{+} \ni t \mapsto 	h(t) \in \mathcal{G} $ 
	is continuous. 
	Consequently, $ h $ is 
	measurable and we compute
	\begin{align}
                          \label{eq:est.h}
          \| h \|_{\mathcal{H}}^{2}
		&\leq \int_{\mathbb{R}_{+}} \dd t 
		\int_{\sigma} \dd\mu(\lambda) 
		\exp(- 2^{1/2} \, (|z| - \Re(z))^{1/2} \, t) \,
		\| \varphi(\lambda) \|_{\mathcal{G}(\lambda)}^{2}	\\
                \nonumber
		&= \frac{1}{2^{1/2} \, (|z| - \Re(z))^{1/2}} 
		\| \varphi \|_{\mathcal{G}}^{2}	
		< \infty.		\qedhere
	\end{align}
\end{proof}
Next we show:
\begin{lemma}
  \label{Teilmengen_von_dom(h)__II}
  Let $ z \in \mathbb{C}
  \setminus [\min \sigma ,\infty) $ and let $ \varphi \in
  \dom(L^{1/4}) $.  Then the function $h \colon \R_+ \to \mathcal G$
  defined in~\eqref{eq:def.h} is also in $\mathcal{H}^{1}$
\end{lemma}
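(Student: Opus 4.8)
The plan is to exploit the standing assumption that $L$ is multiplication by the independent variable $\lambda$ on $\mathcal{G}=\int_{\sigma}^{\oplus}\mathcal{G}(\lambda)\dd\mu(\lambda)$, so that $\dom(L^{1/2})=\{\phi:\int_{\sigma}\lambda\|\phi(\lambda)\|^{2}\dd\mu(\lambda)<\infty\}$ with $L^{1/2}$ acting as multiplication by $\lambda^{1/2}$, and $\varphi\in\dom(L^{1/4})$ means exactly $\int_{\sigma}\lambda^{1/2}\|\varphi(\lambda)\|^{2}\dd\mu(\lambda)<\infty$. Since $h\in\mathcal{H}$ is already known from Lemma~\ref{Der_fortgesetzte_Loesungsoperator}, and $\mathcal{H}^{1}=\Sob{\R_{+},\mathcal{G}}\cap\Lp[2]{\R_{+},\dom(L^{1/2})}$, it remains to check that $h$ has a weak derivative in $\Lsqr{\R_{+},\mathcal{G}}$ and that $\int_{\R_{+}}\|L^{1/2}h(t)\|_{\mathcal{G}}^{2}\dd t<\infty$. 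The candidates, obtained by formally differentiating resp.\ applying $L^{1/2}$ under the direct integral, are
\begin{align*}
	g(t) &= \int_{\sigma}^{\oplus} \im\sqrt{z-\lambda}\,\exp(\im\sqrt{z-\lambda}\,t)\,\varphi(\lambda)\dd\mu(\lambda), \\
	\wt g(t) &= \int_{\sigma}^{\oplus}\lambda^{1/2}\,\exp(\im\sqrt{z-\lambda}\,t)\,\varphi(\lambda)\dd\mu(\lambda),
\end{align*}
where $g$ is the expected $h'$ and $\wt g$ the expected $L^{1/2}h$.

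The heart of the matter is a lower bound for the decay rate. With the chosen branch one has $\Im\sqrt{z-\lambda}=\bigl((|z-\lambda|-\Re z+\lambda)/2\bigr)^{1/2}$, and the hypothesis $z\in\mathbb{C}\setminus[\min\sigma,\infty)$ forces $z-\lambda\notin[0,\infty)$ for every $\lambda\in\sigma$, so this is strictly positive. I would observe that the quotient $\Im\sqrt{z-\lambda}/(1+\lambda)^{1/2}$ is continuous and positive on $[\min\sigma,\infty)$ and tends to a positive limit as $\lambda\to\infty$, hence $\Im\sqrt{z-\lambda}\geq c\,(1+\lambda)^{1/2}$ for some $c=c(z)>0$. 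Then, using $|\exp(\im\sqrt{z-\lambda}\,t)|^{2}=\exp(-2\,\Im\sqrt{z-\lambda}\,t)$ and $\int_{0}^{\infty}\exp(-2bt)\dd t=1/(2b)$, Tonelli's theorem gives
\begin{align*}
	\int_{\R_{+}}\|g(t)\|_{\mathcal{G}}^{2}\dd t
	&= \int_{\sigma}\frac{|z-\lambda|}{2\,\Im\sqrt{z-\lambda}}\,\|\varphi(\lambda)\|^{2}\dd\mu(\lambda), \\
	\int_{\R_{+}}\|\wt g(t)\|_{\mathcal{G}}^{2}\dd t
	&= \int_{\sigma}\frac{\lambda}{2\,\Im\sqrt{z-\lambda}}\,\|\varphi(\lambda)\|^{2}\dd\mu(\lambda).
\end{align*}
The lower bound together with $|z-\lambda|\leq|z|+\lambda$ bounds both multipliers by $C(z)(1+\lambda^{1/2})$, so both integrals are dominated by $C\bigl(\|\varphi\|_{\mathcal{G}}^{2}+\|L^{1/4}\varphi\|_{\mathcal{G}}^{2}\bigr)<\infty$. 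This computation is also what singles out $\dom(L^{1/4})$ as the correct hypothesis: the $\lambda^{1/2}$ growth of the multipliers is exactly absorbed by one quarter power of $L$.

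It then remains to justify the two termwise formulas. For the derivative I would use that, for each fixed $\lambda$, $\exp(\im\sqrt{z-\lambda}\,t)-1=\int_{0}^{t}\im\sqrt{z-\lambda}\,\exp(\im\sqrt{z-\lambda}\,s)\dd s$; integrating against $\varphi(\lambda)$ over $\dd\mu$ and interchanging the order of integration (Fubini applies, since $g\in\Lsqr{(0,t),\mathcal{G}}\subset\Lp[1]{(0,t),\mathcal{G}}$ on each bounded interval by the estimate above) yields $h(t)=\varphi+\int_{0}^{t}g(s)\dd s$, so $h$ is Bochner absolutely continuous with weak derivative $g\in\Lsqr{\R_{+},\mathcal{G}}$ and thus $h\in\Sob{\R_{+},\mathcal{G}}$. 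For the $L^{1/2}$-term, fixing $t>0$ the super-polynomial bound $\lambda\exp(-2\,\Im\sqrt{z-\lambda}\,t)\leq\lambda\exp(-2ct\sqrt{\lambda})$ is bounded in $\lambda$, so $h(t)\in\dom(L^{1/2})$ for every $t>0$ and $L^{1/2}h(t)=\wt g(t)$ by the direct-integral description of $L^{1/2}$; combined with the finiteness above this gives $h\in\Lp[2]{\R_{+},\dom(L^{1/2})}$ and hence $h\in\mathcal{H}^{1}$. The step I expect to require the most care is precisely this interchange of the unbounded closed operator $L^{1/2}$ (and of the distributional derivative) with the direct integral; the norm estimates, once the lower bound on $\Im\sqrt{z-\lambda}$ is secured, are routine dominated-convergence bookkeeping.
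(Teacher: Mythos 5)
Your proof is correct, but it takes a genuinely different route from the paper. You verify membership in $\mathcal{H}^{1}$ directly for every $\varphi\in\dom(L^{1/4})$: you write down explicit candidates for $h'$ and $L^{1/2}h$, prove the key lower bound $\Im\sqrt{z-\lambda}\geq c(z)(1+\lambda)^{1/2}$ on $\sigma$, and then justify the interchange of the weak derivative and of $L^{1/2}$ with the direct integral (Fubini plus functional calculus). The paper instead argues by approximation: it does the direct verification only in the easy case $\varphi\in\dom(L)$, where $h'$ exists in the strong sense and no delicate interchange is needed, and then handles general $\varphi\in\dom(L^{1/4})$ by choosing $(\varphi_m)\subset\dom(L)$ converging in the $L^{1/4}$-graph norm, showing $\|h-h_m\|_{\mathcal H}\to 0$ and that $(h_m)$ is Cauchy in the form norm, and invoking the \emph{closedness of the form} $\mathfrak{h}$ to conclude $h\in\mathcal{H}^{1}$. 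Your approach buys explicitness and a quantitative bound $\mathfrak{h}(h)\leq C(z)\bigl(\|\varphi\|_{\mathcal G}^{2}+\|L^{1/4}\varphi\|_{\mathcal G}^{2}\bigr)$, and it isolates exactly why $\dom(L^{1/4})$ is the right hypothesis; the price is the careful bookkeeping you yourself flag. The paper's approach replaces that bookkeeping by soft functional analysis, and — importantly for the rest of the paper — its proof produces the extra statement~\eqref{Teilmengen_von_dom(h)__III}, namely $\|h-h_m\|_{\mathfrak{h}}\to 0$, which is reused later (Step~2 of Lemma~\ref{Loesungsoperator_I} and Step~2 of Lemma~\ref{Die_Dirichlet-zu-Neumann-Form}). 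Your estimates would also deliver this convergence (apply them to $\varphi-\varphi_m$), but as written your argument proves only the lemma as stated.
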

\begin{proof}
	First consider the case when $ \varphi \in \dom(L) $. 
	By Lemma~\ref{Der_fortgesetzte_Loesungsoperator} we know that 
	$ h \in \mathcal{H} $, and it is straightforward to show that 
	$ h \in \mathcal{H}^{1} $; note that $ h^{\prime} $ exists in 
	the strong sense. 
	
	Now consider the case when $ \varphi \in \dom(L^{1/4}) $. 
	Again, Lemma~\ref{Der_fortgesetzte_Loesungsoperator} shows that 
	$ h $ is in $ \mathcal{H} $. 
	Since $ \dom(L) $ is a core for $ L^{1/4} $ we can approximate 
	$ \varphi $ by a sequence 
	$ (\varphi_{m}) \subset \dom(L) $ with respect to the graph norm 
	of $ L^{1/4} $. 
	Straightforward computations show that 
	$ \| h - h_{m} \|_{\mathcal{H}}
	\xrightarrow{m \rightarrow \infty} 0 $ and 
	$	\mathfrak{h}(h_{k} - h_{m}) \xrightarrow{k,m \rightarrow \infty} 0 $,
	where 
	$ h_{m} = \int_{\sigma}^{\oplus} \exp(\im \sqrt{z-\lambda} \, {\bullet})
	\varphi_{m}(\lambda) \dd\mu(\lambda) $ for all $ m \in \mathbb{N} $.
	Consequently, the closedness of $ \mathfrak{h} $ yields:
	\begin{align}	\label{Teilmengen_von_dom(h)__III}
		h \in \mathcal{H}^{1} 
		\quad	\text{and}	\quad
		\| h-h_m \|_{\mathfrak{h}} \xrightarrow{m \rightarrow \infty} 0.
	\end{align}
	
	This completes the proof of the lemma.
\end{proof}

\subsection{The boundary operator}
\label{subsec:bd.op}

As boundary operator we will choose the restriction to $
\mathcal{H}^{1} $ of the usual boundary operator on the Sobolev space
$ \Sob{\R_{+}, \mathcal{G}} $ that evaluates a given function at zero,
i.\,e., we define the \emph{boundary operator} $ \Gamma \colon
\mathcal{H}^{1} \rightarrow \mathcal{G} $ by $ \Gamma u = u(0) $.
\begin{lemma}	\label{Der_Randoperator_auf_D}
	One has $ \| \Gamma \| \leq 2 $.
\end{lemma}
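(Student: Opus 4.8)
The plan is to prove the operator-norm bound by reducing it to an elementary trace estimate on the half-line. Recall that the norm on $\mathcal{H}^{1}$ is the intrinsic form norm, $\| u \|_{\mathfrak{h}}^{2} = \mathfrak{h}(u) + \| u \|_{\mathcal{H}}^{2}$, and that $\mathfrak{h}(u) = \int_{\R_{+}} ( \| u'(t) \|_{\mathcal{G}}^{2} + \| L^{1/2} u(t) \|_{\mathcal{G}}^{2} ) \dd t \geq \| u' \|_{\mathcal{H}}^{2}$. Hence both $\| u \|_{\mathcal{H}} \leq \| u \|_{\mathfrak{h}}$ and $\| u' \|_{\mathcal{H}} \leq \| u \|_{\mathfrak{h}}$ hold. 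Since every $u \in \mathcal{H}^{1} \subset \Sob{\R_{+}, \mathcal{G}}$ admits an absolutely continuous representative for which the fundamental theorem of calculus is valid, the boundary value $\Gamma u = u(0) \in \mathcal{G}$ is well defined, and it suffices to estimate $\| u(0) \|_{\mathcal{G}}$ against these two quantities.

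The key step is to express $u(0)$ without appealing to decay of $u$ at infinity, which keeps the argument purely local near the boundary. Writing $u(0) = u(t) - \int_{0}^{t} u'(s) \dd s$ for every $t \in \R_{+}$ and averaging this identity over $t \in (0,1)$ gives
\begin{align*}
	u(0) = \int_{0}^{1} u(t) \dd t - \int_{0}^{1} \! \int_{0}^{t} u'(s) \dd s \dd t .
\end{align*}
Both terms are Bochner integrals in $\mathcal{G}$. For the first, the Cauchy--Schwarz inequality on the probability space $(0,1)$ yields $\| \int_{0}^{1} u(t) \dd t \|_{\mathcal{G}} \leq \int_{0}^{1} \| u(t) \|_{\mathcal{G}} \dd t \leq \| u \|_{\mathcal{H}}$. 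For the second, enlarging the inner domain of integration from $(0,t)$ to $(0,1)$ and using Cauchy--Schwarz once more gives $\| \int_{0}^{1} \! \int_{0}^{t} u'(s) \dd s \dd t \|_{\mathcal{G}} \leq \int_{0}^{1} \| u'(s) \|_{\mathcal{G}} \dd s \leq \| u' \|_{\mathcal{H}}$.

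Combining the two estimates via the triangle inequality yields $\| \Gamma u \|_{\mathcal{G}} = \| u(0) \|_{\mathcal{G}} \leq \| u \|_{\mathcal{H}} + \| u' \|_{\mathcal{H}} \leq 2 \| u \|_{\mathfrak{h}}$, which is the claimed bound (the constant $2$ is not optimal; one could sharpen it to $1$ by instead integrating $\tfrac{\dd}{\dd t} \| u(t) \|_{\mathcal{G}}^{2}$ and invoking decay at infinity). The main obstacle is not the estimate itself but the justification that elements of $\mathcal{H}^{1}$ genuinely possess a continuous representative for which the fundamental theorem of calculus holds in the Bochner sense; this rests on the standard theory of vector-valued Sobolev functions on an interval and on the fact that $u \in \Sob{\R_{+}, \mathcal{G}}$ forces $u, u' \in \Lp[2]{\R_{+}, \mathcal{G}}$, so that all integrals above converge absolutely.
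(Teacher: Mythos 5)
Your proof is correct and takes essentially the same route as the paper: the paper writes $u(0) = -\int_0^1\bigl(\chi'(t)\,u(t) + \chi(t)\,u'(t)\bigr)\dd t$ with the cutoff $\chi(t)=1-t$ on $[0,1]$, which is exactly your averaged identity $u(0)=\int_0^1 u(t)\dd t-\int_0^1\!\int_0^t u'(s)\dd s\dd t$ after an application of Fubini's theorem, and both arguments then conclude with Cauchy--Schwarz on the unit interval. The only cosmetic difference is that you assemble the final bound via the triangle inequality, $\|u\|_{\mathcal{H}}+\|u'\|_{\mathcal{H}}\le 2\|u\|_{\mathfrak{h}}$, whereas the paper estimates the single integrand in $\mathsf L_2$.
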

\begin{proof}
	Let $ u \in \mathcal{H}^{1} $. 
	Define the Lipschitz continuous function 
	$ \chi \colon [0,\infty) \rightarrow [0,1] $ by
	\begin{equation*}
          \chi(t) = 1-t \text{ if } 0 \le t < 1
          \quadtext{and}
          \chi(t) = 0 \text{ if } t \ge 1.
	\end{equation*}
	Then one has
	\begin{align*}
		u(0) 
		= - \big( (\chi \cdot u)(1) - (\chi \cdot u)(0) \big)
		= - \int_{0}^1 (\chi \cdot u)^{\prime}(t) \dd t
		= - \int_{0}^1 \big( \chi^{\prime}(t) \cdot u(t)
		+ \chi(t) \cdot u^{\prime}(t) \big) \dd t.
	\end{align*}
	The result now follows from
	\begin{align*}
		\| \Gamma u \|_{\mathcal{G}}^{2}
		&\leq 
		2 \int_{0}^1 
		\| \chi^{\prime}(t) \cdot u(t) + \chi(t) \cdot u^{\prime}(t)
		\|_{\mathcal{G}}^{2} \dd t	\\
		&\leq
		4 \int_{0}^1 
		\big( \| u(t) \|_{\mathcal{G}}^{2} 
		+ \| u^{\prime}(t) \|_{\mathcal{G}}^{2} \big)
		\dd t
		\\
		&\leq 4 \| u \|_{\mathfrak{h}}^{2}. \qedhere
	\end{align*}
\end{proof}
The proof of the following lemma is straightforward:
\begin{lemma}	\label{Dichtheit insbesondere von ran Gamma}
	The kernel of $ \Gamma $ is dense in $ \mathcal{H} $ 
	with respect to the norm $ \| {\bullet} \|_{\mathcal{H}} $, and
	the range of $ \Gamma $ is dense in $ \mathcal{G} $.
\end{lemma}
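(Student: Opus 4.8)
The plan is to treat the two density statements separately, in each case exhibiting an explicit family of elements. Throughout I fix the subspace $\dom(L^{1/2}) \subset \mathcal{G}$, which is dense since $L^{1/2}$ is self-adjoint and hence densely defined.

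For the range I would show that $\ran \Gamma \supseteq \dom(L^{1/2})$. Fix the scalar function $\eta \colon \R_+ \to \C$, $\eta(t) = \e^{-t}$, which lies in $\Sob{\R_+}$ and satisfies $\eta(0)=1$. Given $\phi \in \dom(L^{1/2})$, set $u = \eta \otimes \phi$, so that $u(t) = \e^{-t}\phi$, $u'(t) = -\e^{-t}\phi$, and $L^{1/2}u(t) = \e^{-t}L^{1/2}\phi$. The exponential decay makes $\int_{\R_+}\e^{-2t}\dd t \,\bigl(2\|\phi\|_{\mathcal{G}}^2 + \|L^{1/2}\phi\|_{\mathcal{G}}^2\bigr) < \infty$, so $u \in \Sob{\R_+,\mathcal{G}} \cap \Lp[2]{\R_+, \dom(L^{1/2})} = \mathcal{H}^{1}$. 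Since $\Gamma u = \eta(0)\phi = \phi$, every $\phi \in \dom(L^{1/2})$ lies in $\ran \Gamma$, and density of $\ran \Gamma$ in $\mathcal{G}$ follows.

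For the kernel I would produce elements of $\ker \Gamma$ whose span is already dense in the $\mathcal{H}$-norm. Take $\psi \in \Cci{\R_+}$ and $\phi \in \dom(L^{1/2})$. Since $\R_+ = (0,\infty)$ is open, $\psi$ vanishes in a neighbourhood of $0$; hence $\psi \otimes \phi \in \mathcal{H}^{1}$ and $\Gamma(\psi \otimes \phi) = \psi(0)\phi = 0$, so that $\Cci{\R_+} \odot \dom(L^{1/2}) \subseteq \ker \Gamma$. Because $\Cci{\R_+}$ is dense in $\Lsqr{\R_+}$ and $\dom(L^{1/2})$ is dense in $\mathcal{G}$, the algebraic tensor product of these two dense subspaces is dense in $\Lsqr{\R_+} \otimes \mathcal{G} = \mathcal{H}$; therefore $\ker \Gamma$ is dense in $\mathcal{H}$.

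The computations are entirely routine, and there is no genuine obstacle; the only point requiring a little care is that the candidate functions must be verified to lie in $\mathcal{H}^{1}$, i.e., to satisfy the $\Sob{\cdot}$-condition in the variable $t$ and the $\dom(L^{1/2})$-valued condition simultaneously. This is exactly why one works with $\phi$ in the dense subspace $\dom(L^{1/2})$ rather than with an arbitrary $\phi \in \mathcal{G}$; the decay of $\eta$ and the compact support of $\psi$ then render every integral finite without further effort.
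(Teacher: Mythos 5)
Your proof is correct: the paper itself omits the argument, stating only that ``the proof of the following lemma is straightforward,'' and your verification is exactly the routine argument intended. Both halves check out --- $\e^{-t}\otimes\phi$ with $\phi\in\dom(L^{1/2})$ lies in $\mathcal{H}^1$ and realises $\phi$ as a boundary value, and $\Cci{\R_+}\odot\dom(L^{1/2})\subseteq\ker\Gamma$ is dense in $\mathcal{H}$ since the algebraic tensor product of dense subspaces is dense in the Hilbert space tensor product.
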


Next we define the form
$ \mathfrak{h}^{\Dir} = \mathfrak{h} \restr{\mathcal{H}^{1,\Dir}} $ 
on the closed subspace $ \mathcal{H}^{1,\Dir} = \ker(\Gamma) $ of
$ \mathcal{H}^{1} $. 
Then $ \mathfrak{h}^{\Dir} $ is a densely defined nonnegative closed form. 
We call $ \HDir $, the self-adjoint operator 
associated with $ \mathfrak{h}^{\Dir} $, 
the \emph{Dirichlet operator}. 
We shall show that the Dirichlet operator coincides 
with the self-adjoint operator 
\begin{equation*}
	\Big( {-} \frac{\dd^{2}}{\dd t^{2}} \Big)^{\! \mathrm{D}}
	\otimes \id 
	+ \id \otimes L
	\quad
	\text{on } 
	\mathsf L_{2}(\mathbb{R}_{+}) \otimes \mathcal{G}.
\end{equation*}
We know (see Subsection~\ref{subsec:tensor} above) that
\begin{align*}
	\mathcal D^\Dir_{\mathrm b} 
	= \Span 
	\{ \psi \otimes \phi : 
	\psi \in \mathcal D^{\mathrm b} \big( (- \dd^{2} 
	/ \mathrm{dt}^{2})^\Dir \big), 
	\phi \in \mathcal D^{\mathrm b}(L) \}
	\subset \mathsf L_{2}(\mathbb{R}_{+}) \otimes \mathcal{G}
\end{align*}
is an invariant core for $ \big( {-} \frac{\dd^{2}}
{\dd t^{2}} \big)^{\! \mathrm{D}} \otimes \id 
+ \id \otimes L $.
Note that $ \mathcal D^\Dir_{\mathrm b} \subset \ker(\Gamma) $.
\begin{lemma}
  The Dirichlet operator is given by $ \HDir = \big( {-} \frac{\dd^{2}}
	{\dd t^{2}} \big)^{\! \mathrm{D}} \otimes \id 
	+ \id \otimes L $.
\end{lemma}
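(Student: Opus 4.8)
The plan is to show that the operator $\wtHDir := \big( {-} \frac{\dd^{2}}{\dd t^{2}} \big)^{\! \mathrm{D}} \otimes \id + \id \otimes L$ is the self-adjoint operator associated with the closed form $\mathfrak{h}^{\Dir}$. Since $\HDir$ is by definition this operator and the self-adjoint operator associated with a closed densely defined form is unique, this identification will give $\HDir = \wtHDir$; the argument mirrors the one used above for the Neumann operator $H$. As a preliminary step I would record the domain of $\wtHDir$: exactly as for $H$, and again by~\cite[Proposition 5.2]{Malamud_Neidhardt} (now with Dirichlet in place of Neumann boundary conditions), one has
\begin{align*}
  \dom(\wtHDir) = \{ u \in \Sob[2]{\R_{+}, \mathcal{G}} \cap \Lp[2]{\R_{+}, \dom(L)} : u(0) = 0 \}.
\end{align*}
In particular every $u \in \dom(\wtHDir)$ lies in $\mathcal{H}^{1}$ and satisfies $\Gamma u = u(0) = 0$, so that $\dom(\wtHDir) \subset \ker(\Gamma) = \mathcal{H}^{1,\Dir} = \dom(\mathfrak{h}^{\Dir})$.

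Next I would verify the identity characterising the associated operator: for all $u \in \dom(\wtHDir)$ and all $v \in \mathcal{H}^{1,\Dir}$ one should have $\mathfrak{h}^{\Dir}(u,v) = \langle \wtHDir u, v \rangle_{\mathcal{H}}$. As in the proof of the lemma identifying $H$ with $\mathfrak{h}$, I would integrate by parts in $\int_{\R_{+}} \langle u'(t), v'(t) \rangle_{\mathcal{G}} \dd t$ and use the self-adjointness of $L^{1/2}$ in the term $\int_{\R_{+}} \langle L^{1/2}(u(t)), L^{1/2}(v(t)) \rangle_{\mathcal{G}} \dd t$. The crucial difference from the Neumann case lies in the boundary term, which here equals $-\langle u'(0), v(0) \rangle_{\mathcal{G}}$: although $u'(0)$ need not vanish on the Dirichlet domain, this term vanishes because the test function satisfies $v(0) = 0$, as $v \in \ker(\Gamma)$. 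Hence the boundary contribution drops out and the computation yields $\mathfrak{h}^{\Dir}(u,v) = \int_{\R_{+}} \langle -u''(t) + L(u(t)), v(t) \rangle_{\mathcal{G}} \dd t = \langle \wtHDir u, v \rangle_{\mathcal{H}}$.

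This shows $\wtHDir \subset \HDir$. Since $\wtHDir$ is self-adjoint by the tensor-product results recalled in Subsection~\ref{subsec:tensor}, while $\HDir$ is self-adjoint and a self-adjoint operator admits no proper self-adjoint extension, I conclude $\HDir = \wtHDir$. The step requiring the most care is the justification of the integration by parts in the vector-valued setting with the unbounded fibre operator $L$: one must confirm that $u(t) \in \dom(L)$ for almost every $t$, that the boundary term at infinity vanishes thanks to the decay inherent in membership of $\Sob[2]{\R_{+}, \mathcal{G}}$, and that the $t$-integration may be interchanged with $L$. These points are handled exactly as in the Neumann lemma above; the only genuinely new ingredient is that the vanishing of the boundary term at $0$ is now forced by $v \in \ker(\Gamma)$ rather than by a condition on the domain of $u$.
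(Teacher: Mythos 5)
Your argument is correct, but it takes a genuinely different route from the paper's proof of this lemma. You obtain the inclusion $\wtHDir \subset \HDir$ by importing the explicit domain characterisation $\dom(\wtHDir) = \{ u \in \Sob[2]{\R_{+},\mathcal{G}} \cap \Lp[2]{\R_{+},\dom(L)} : u(0)=0 \}$ from \cite[Proposition 5.2]{Malamud_Neidhardt}, which immediately yields $\dom(\wtHDir) \subset \ker(\Gamma)$, and then perform a single integration by parts in which the boundary term $-\langle u^{\prime}(0), v(0)\rangle_{\mathcal{G}}$ vanishes because the test function satisfies $v(0)=0$. This mirrors the paper's treatment of the Neumann operator $H$, and the paper itself endorses the required citation (the remark directly after the lemma states exactly this domain formula with the same reference), so your use of it is not circular. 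The paper's own proof, by contrast, never invokes the explicit domain of $\wtHDir$: it works with the core $\mathcal D^\Dir_{\mathrm b}$ of bounded vectors, proving the form identity there (Step 1), extending it in the second argument to all of $\ker(\Gamma)$ by approximation with elements of the Neumann core $\mathcal D_{\mathrm b}$, where the boundary term $\langle \Gamma(u^{\prime}), \Gamma f_{k}\rangle_{\mathcal{G}}$ disappears only in the limit (Step 2), and finally extending in the first argument to all of $\dom(\wtHDir)$, where the crucial inclusion $\dom(\wtHDir) \subset \ker(\Gamma)$ is \emph{derived} from the closedness of $\mathfrak{h}^{\Dir}$ applied to a graph-norm approximating sequence (Step 3). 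What each approach buys: yours is shorter and symmetric with the Neumann case, at the price of leaning on the external Dirichlet-domain result (including the tacit identification of the tensor sum with the differential-expression realisation on that domain); the paper's is self-contained modulo Schm\"udgen's core theorem and obtains the domain description of $\HDir$ afterwards as a corollary rather than using it as an input. The two points you flagged as needing care — vanishing of the boundary term at infinity and identifying $\wtHDir u$ with $-u^{\prime\prime} + Lu$ on the cited domain — are handled at the same level of rigour as in the paper's own Neumann lemma, so they do not constitute a gap.
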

\begin{proof}
	For brevity we shall write $ \wtHDir 
	= \big( {-} \frac{\dd^{2}}{\dd t^{2}} \big)^{\! \mathrm{D}} 
	\otimes \id 
	+ \id \otimes L $.
	We will show that $ \wtHDir $ is associated with 
	$ \mathfrak{h}^{\mathrm{D}} $. This is proven in three steps:
      \paragraph{Step 1.}
      Integration by parts yields $ \mathfrak{h}^{\mathrm{D}}(u,f) =
      \big\langle \wtHDir u, f \big\rangle_{\mathcal{H}} $
      for all $ u,f \in \mathcal D^\Dir_{\mathrm b} $.
      
      \paragraph{Step 2.}
			Let $ u \in \mathcal D^\Dir_{\mathrm b} $ and
			let $ \tilde{f} \in \ker(\Gamma) $. 
			Choose $ (f_{k}) \subset \mathcal D_{\mathrm b} $ with
			$ \| \tilde{f} - f_{k} \|_{\mathfrak{h}} 
			\xrightarrow[k \rightarrow \infty]{} 0 $.
			Integration by parts yields $ \mathfrak{h}(u, f_{k})
			= \big\langle \wtHDir u, f_{k} \big\rangle_{\mathcal{H}}
			- \langle \Gamma(u^{\prime}), \Gamma f_{k} \rangle_{\mathcal{G}} $,
			where $ \Gamma(u^{\prime}) \in \dom(L) \subset \mathcal{G} $.
			As $ k $ tends to infinity we obtain that 
			$ \mathfrak{h}(u, f_{k}) \rightarrow \mathfrak{h}(u, \tilde{f}) 
			= \mathfrak{h}^{\mathrm{D}}(u, \tilde{f}) $ and, 
			on the other hand, 
			$ \big\langle \wtHDir u, f_{k} \big\rangle_{\mathcal{H}}
			- \langle \Gamma(u^{\prime}), \Gamma f_{k} \rangle_{\mathcal{G}}
			\rightarrow 
			\big\langle \wtHDir u, \tilde{f} \big\rangle_{\mathcal{H}}
			- \langle \Gamma(u^{\prime}), \Gamma \tilde{f} \rangle_{\mathcal{G}}
			= \big\langle \wtHDir u, \tilde{f} \big\rangle_{\mathcal{H}} $.
		\paragraph{Step 3.}
			Let $ \tilde u \in \dom \! \big( \wtHDir \big) $ and
			let $ \tilde{f} \in \ker(\Gamma) $. 
			Choose $ (u_m) \subset \mathcal D^\Dir_{\mathrm b} $ with
			$ \| \tilde u - u_m \|_{\wtHDir} 
			\xrightarrow[m \rightarrow \infty]{} 0 $.
			Then, by Step 1 and the positivity of $ \wtHDir $, one has
			\begin{align*}
				\mathfrak{h}^{\mathrm{D}}(u_k - u_m)
				= \big| \big\langle \wtHDir (u_k - u_m), 
				u_k - u_m \big\rangle_{\mathcal{H}} \big|
				\leq \| u_k - u_m \|_{\wtHDir}^{2}
				\quad	\text{for all } k,m \in \mathbb{N}
			\end{align*}
			so $ (u_m)_m $ is Cauchy with respect to 
			$ \| {\bullet} \|_{\mathfrak{h}^{\mathrm{D}}} $.
			Since $ \mathfrak{h}^{\mathrm{D}} $ is closed it follows that 
			$ \tilde u \in \ker(\Gamma) $ and 
			$ \| \tilde u - u_m \|_{\mathfrak{h}^{\mathrm{D}}} 
			\xrightarrow[m \rightarrow \infty]{} 0 $.
			As $ m $ tends to infinity we obtain that 
			$ \mathfrak{h}^{\mathrm{D}}(u_m, \tilde{f}) 
			\rightarrow \mathfrak{h}^{\mathrm{D}}(\tilde u, \tilde{f}) $
			and, on the other hand, 
			$ \big\langle \wtHDir u_m, 
			\tilde{f} \big\rangle_{\mathcal{H}}
			\rightarrow 
			\big\langle \wtHDir \tilde u, 
			\tilde{f} \big\rangle_{\mathcal{H}} $.
			Consequently, 
			\begin{align*}
				\mathfrak{h}^{\mathrm{D}}(\tilde u, \tilde{f})
				= \big\langle \wtHDir \tilde u, 
				\tilde{f} \big\rangle_{\mathcal{H}} 
			\end{align*}
			and thus $ \wtHDir \subset \HDir $.
                        Since $ \wtHDir $ and $ \HDir $ are
                        both self-adjoint we conclude that $
                        \wtHDir = \HDir $.
\end{proof}
\begin{lemma}
  \indent
	\begin{enumerate}
		\item The operators $ H $ and $ \HDir $ 
					are unitarily equivalent.
		\item The spectrum of $ H $ is 
					purely absolutely continuous filling in 
					the interval $ [\min \sigma, \infty) $; 
					the same is true for $ \HDir $.
	\end{enumerate}
\end{lemma}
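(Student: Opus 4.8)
The plan is to reduce the statement to the one-dimensional Neumann and Dirichlet Laplacians on the half-line and then to feed the outcome into the tensor-product facts collected in Proposition~\ref{Allgemeine_Fakten_zum_Spektrum_von_H}. Throughout I would write $T_{\Neu} = \big({-}\frac{\dd^{2}}{\dd t^{2}}\big)^{\Neu}$ and $T_{\Dir} = \big({-}\frac{\dd^{2}}{\dd t^{2}}\big)^{\Dir}$ for the Neumann and Dirichlet Laplacians on $\Lsqr{\R_{+}}$, so that $H = T_{\Neu} \otimes \id + \id \otimes L$ and $\HDir = T_{\Dir} \otimes \id + \id \otimes L$.

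For part (a) I would first record the classical fact that $T_{\Neu}$ and $T_{\Dir}$ are unitarily equivalent on $\Lsqr{\R_{+}}$: the cosine transform diagonalises $T_{\Neu}$ and the sine transform diagonalises $T_{\Dir}$, both to multiplication by $k^{2}$ on $\Lsqr{\R_{+}}$. Equivalently, each of $T_{\Neu}$ and $T_{\Dir}$ has simple, purely absolutely continuous spectrum equal to $[0,\infty)$, so they share the same spectral multiplicity function and are therefore unitarily equivalent by the spectral theorem. Fixing a unitary $U$ on $\Lsqr{\R_{+}}$ with $U T_{\Neu} U^{*} = T_{\Dir}$, the operator $U \otimes \id$ is unitary on $\Lsqr{\R_{+}} \otimes \mathcal{G}$, and since conjugation by a unitary tensored with the identity respects the tensor structure, one obtains $(U \otimes \id) H (U \otimes \id)^{*} = (U T_{\Neu} U^{*}) \otimes \id + \id \otimes L = T_{\Dir} \otimes \id + \id \otimes L = \HDir$. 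A small amount of care with domains is needed to upgrade this from a formal identity to an identity of self-adjoint operators; it suffices to verify it on the invariant core $\mathcal D_{\mathrm b}$ and then invoke self-adjointness of both sides. This establishes the unitary equivalence of $H$ and $\HDir$.

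For part (b) I would apply Proposition~\ref{Allgemeine_Fakten_zum_Spektrum_von_H} with $T_{1} = T_{\Neu}$ and $T_{2} = L$. Since $T_{\Neu}$ has purely absolutely continuous spectrum, part (c) of that proposition shows that $H$ has purely absolutely continuous spectrum. For the location of the spectrum, part (a) gives $\sigma(H) = \{\, t_{1} + t_{2} : t_{1} \in \sigma(T_{\Neu}),\ t_{2} \in \sigma(L) \,\} = [0,\infty) + \sigma$. Because $L \geq 0$, the set $\sigma$ is a nonempty closed subset of $[0,\infty)$, hence $\min \sigma$ exists and is attained; taking $t_{2} = \min \sigma$ and letting $t_{1}$ range over $[0,\infty)$ yields $[0,\infty) + \sigma = [\min \sigma, \infty)$, the reverse inclusion being immediate. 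This proves the claim for $H$. The corresponding conclusions for $\HDir$ then follow at once from part (a), since unitarily equivalent self-adjoint operators have identical spectra and spectral types; alternatively one repeats the argument verbatim with $T_{\Dir}$ in place of $T_{\Neu}$.

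The only genuinely nontrivial ingredient is the unitary equivalence of the one-dimensional Neumann and Dirichlet Laplacians used in part (a); I expect this classical but not entirely trivial point to carry the main weight, whereas the remainder is a direct application of the tensor-product spectral facts already established in Proposition~\ref{Allgemeine_Fakten_zum_Spektrum_von_H}.
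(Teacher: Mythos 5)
Your proposal is correct and takes essentially the same route as the paper: part (a) by lifting the classical unitary equivalence of the one-dimensional Neumann and Dirichlet Laplacians on $\Lsqr{\R_{+}}$ through the tensor product, and part (b) by invoking parts (a) and (c) of Proposition~\ref{Allgemeine_Fakten_zum_Spektrum_von_H}. The paper's proof is just a terser version of yours, leaving the cosine/sine-transform fact, the conjugation by $U \otimes \id$ on the core $\mathcal D_{\mathrm b}$, and the computation $[0,\infty) + \sigma = [\min \sigma, \infty)$ implicit.
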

\begin{proof}
  (a)~Since the Neumann and Dirichlet Laplacians on $
  L_{2}(\mathbb{R}_{+}) $ are unitarily equivalent it follows that $ H
  $ and $ \HDir $ are also unitarily equivalent.  Part~(b) follows
  from Proposition~\ref{Allgemeine_Fakten_zum_Spektrum_von_H}.
\end{proof}
\begin{remark}
  One can actually show that the domain of $ \HDir $ is given by
	\begin{align*}
		\dom(H^{\mathrm{D}}) 
		= \{ u \in \Sob[2]{\R_{+}, \mathcal{G}} 
		\cap \Lp[2]{\R_{+}, \dom(L)} : u(0) = 0 \},
	\end{align*}
	see~\cite[Proposition 5.2]{Malamud_Neidhardt}.
\end{remark}

\subsection{The solution operator and the range of the boundary
  operator}
\label{subsec:sol.op}

Let $ z \in \mathbb{C} \setminus [\min \sigma, \infty) $. Define 
\begin{align*}
	\mathcal{N}^{1}(z) 
	= \{ h \in \mathcal{H}^{1} : 
	\mathfrak{h}(h,f) = z \langle h,f \rangle_{\mathcal{H}} 
	\text{ for all } f \in \ker (\Gamma) \}.
\end{align*}
The so-called \emph{solution operator}, given formally by $ S(z) =
\left( \Gamma \restr{\mathcal{N}^{1}(z)} \right)^{-1}$, associates to
a boundary value $ \varphi \in \ran(\Gamma) $ the unique element $ h
\in \mathcal{N}^{1}(z)$ such that $ \Gamma h = \varphi $
(see~\cite[Prp~2.9]{Post_I}).
\begin{lemma}	\label{Loesungsoperator_I}
	One has 
	$ \dom(L^{1/4}) \subset \ran(\Gamma) $ and,
	for every $ z \in \mathbb{C} \setminus [\min \sigma, \infty) $,
	\begin{align}	\label{Formel_Loesungsoperator_I}
		S(z) \restr{\dom(L^{1/4})} \varphi 
		= \int_{\sigma}^{\oplus} 
	\exp \! \big( \im \sqrt{z - \lambda} \, {\bullet}  \big) 
	\varphi(\lambda) \dd\mu(\lambda). 
	\end{align}
\end{lemma}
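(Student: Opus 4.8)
The plan is to verify that the function $h$ from~\eqref{eq:def.h} is precisely the weak solution attached to the boundary datum $\varphi$, and then to read the formula~\eqref{Formel_Loesungsoperator_I} off the defining property of $S(z)$. Fix $z\in\mathbb{C}\setminus[\min\sigma,\infty)$. Since the spectrum of $\HDir$ equals $[\min\sigma,\infty)$ (established above), we have $z\notin\sigma(\HDir)$, so by the boundary-pair machinery of Subsection~\ref{subsec:bd2} the map $\Gamma\restr{\mathcal{N}^{1}(z)}$ is a bijection onto $\ran(\Gamma)$ with $S(z)=(\Gamma\restr{\mathcal{N}^{1}(z)})^{-1}$. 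Hence, for $\varphi\in\dom(L^{1/4})$, it suffices to check that $h\in\mathcal{H}^{1}$, that $\Gamma h=\varphi$, and that $h\in\mathcal{N}^{1}(z)$; uniqueness then forces $S(z)\varphi=h$. Membership $h\in\mathcal{H}^{1}$ is exactly Lemma~\ref{Teilmengen_von_dom(h)__II}, while $\Gamma h=h(0)=\int_{\sigma}^{\oplus}\varphi(\lambda)\dd\mu(\lambda)=\varphi$ is immediate since $\exp(\im\sqrt{z-\lambda}\cdot 0)=1$; in particular this already establishes $\dom(L^{1/4})\subset\ran(\Gamma)$.

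The heart of the matter is the weak-solution identity $\mathfrak{h}(h,f)=z\langle h,f\rangle_{\mathcal{H}}$ for all $f\in\ker(\Gamma)$. I would first prove it in the regular case $\varphi\in\dom(L)$, where $h$ has a strong second derivative and $h(t)\in\dom(L)$ for every $t$, with $h''(t)=\int_{\sigma}^{\oplus}-(z-\lambda)\exp(\im\sqrt{z-\lambda}\,t)\varphi(\lambda)\dd\mu(\lambda)$ and $Lh(t)=\int_{\sigma}^{\oplus}\lambda\exp(\im\sqrt{z-\lambda}\,t)\varphi(\lambda)\dd\mu(\lambda)$, both lying in $\mathcal{G}$ because $(L\varphi)(\lambda)=\lambda\varphi(\lambda)$. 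Fibrewise, $u_{\lambda}(t)=\exp(\im\sqrt{z-\lambda}\,t)$ satisfies $-u_{\lambda}''+\lambda u_{\lambda}=zu_{\lambda}$, so that $-h''+Lh=zh$ holds strongly. Integrating the first term of $\mathfrak{h}(h,f)$ by parts gives $\int_{\R_+}\langle h'(t),f'(t)\rangle\dd t=-\int_{\R_+}\langle h''(t),f(t)\rangle\dd t$: the boundary contribution at $0$ vanishes because $f(0)=0$, and the contribution at infinity vanishes because $h'$ and $f$ both lie in $\Sob{\R_{+},\mathcal{G}}$ and are therefore continuous and vanishing at infinity (here $h'\in\Sob{\R_{+},\mathcal{G}}$ uses $h''\in\mathcal{H}$, which holds for $\varphi\in\dom(L)$ by the estimate~\eqref{eq:est.h} applied to the coefficient $(z-\lambda)\varphi(\lambda)$). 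For the potential term, self-adjointness of $L^{1/2}$ together with $h(t)\in\dom(L)$ and $f(t)\in\dom(L^{1/2})$ yields $\int_{\R_+}\langle L^{1/2}h(t),L^{1/2}f(t)\rangle\dd t=\int_{\R_+}\langle Lh(t),f(t)\rangle\dd t$. Adding the two identities and using $-h''+Lh=zh$ gives $\mathfrak{h}(h,f)=\int_{\R_+}\langle zh(t),f(t)\rangle\dd t=z\langle h,f\rangle_{\mathcal{H}}$.

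Finally I would remove the restriction $\varphi\in\dom(L)$ by the same approximation scheme as in Lemma~\ref{Teilmengen_von_dom(h)__II}: choosing $\varphi_{m}\in\dom(L)$ converging to $\varphi\in\dom(L^{1/4})$ in the graph norm of $L^{1/4}$, the associated functions $h_{m}$ converge to $h$ in the form norm by~\eqref{Teilmengen_von_dom(h)__III}. For fixed $f\in\ker(\Gamma)$ both $u\mapsto\mathfrak{h}(u,f)$ and $u\mapsto z\langle u,f\rangle_{\mathcal{H}}$ are continuous with respect to $\|{\bullet}\|_{\mathfrak{h}}$, so the identity $\mathfrak{h}(h_{m},f)=z\langle h_{m},f\rangle_{\mathcal{H}}$ passes to the limit, giving $h\in\mathcal{N}^{1}(z)$; and boundedness of $\Gamma$ (Lemma~\ref{Der_Randoperator_auf_D}) gives $\Gamma h=\lim_{m}\Gamma h_{m}=\lim_{m}\varphi_{m}=\varphi$. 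Thus $h\in\mathcal{N}^{1}(z)$ with $\Gamma h=\varphi$, and uniqueness yields $S(z)\restr{\dom(L^{1/4})}\varphi=h$, which is~\eqref{Formel_Loesungsoperator_I}. The step I expect to require the most care is the vanishing of the boundary term at infinity in the integration by parts; this is the main obstacle, and it is handled through the $\Sob{\R_{+},\mathcal{G}}$-decay of $h'$ and $f$, underpinned by the exponential decay of the integrand recorded in~\eqref{eq:est.h}.
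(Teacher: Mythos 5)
Your proposal is correct, and its skeleton coincides with the paper's: both proofs first treat the regular case $\varphi \in \dom(L)$ (where Lemma~\ref{Teilmengen_von_dom(h)__II} gives $h \in \mathcal{H}^{1}$ and $\Gamma h = \varphi$), and then pass to $\dom(L^{1/4})$ by approximating $\varphi$ in the graph norm of $L^{1/4}$ and using the form-norm convergence $\| h - h_{m} \|_{\mathfrak{h}} \to 0$ from~\eqref{Teilmengen_von_dom(h)__III} together with the form-continuity of $u \mapsto \mathfrak{h}(u,f) - z\langle u,f\rangle_{\mathcal{H}}$. Where you genuinely diverge is in the verification that $h \in \mathcal{N}^{1}(z)$ for $\varphi \in \dom(L)$. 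The paper never integrates by parts against a general $f \in \ker(\Gamma)$: it first computes $\mathfrak{h}(h,\Phi)$ for $\Phi$ in the tensor core $\mathcal D_{\mathrm b}$, obtaining the explicit remainder $-\im \int_{\sigma} \langle \sqrt{z-\lambda}\,\varphi(\lambda), (\Gamma\Phi)(\lambda)\rangle_{\mathcal{G}(\lambda)} \dd\mu(\lambda)$, and then approximates $f$ by core elements $\Phi_{m}$ in the form norm, so that the remainder dies simply because $\Gamma$ is form-bounded and $\Gamma\Phi_{m} \to \Gamma f = 0$; all questions about behaviour at infinity are thereby confined to the very regular core elements. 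You instead integrate by parts directly against an arbitrary $f \in \ker(\Gamma)$, using the strong identity $-h'' + Lh = zh$ and the vanishing at infinity of both $h'$ and $f$. This is a legitimate shortcut (one approximation instead of two), but it obliges you to supply two facts the paper's route never needs: that $h'' \in \mathcal{H}$ for $\varphi \in \dom(L)$ — which you correctly extract from the estimate~\eqref{eq:est.h} applied to $(z-L)\varphi$ — and that every function in $\Sob{\R_{+}, \mathcal{G}}$ tends to $0$ in $\mathcal{G}$ at infinity (the scalar argument via $\frac{\dd}{\dd t}\|u(t)\|^{2}_{\mathcal G} = 2\Re\langle u'(t),u(t)\rangle_{\mathcal G} \in \Lp[1]{\R_+}$ carries over verbatim to the vector-valued case, so this is fine, but it should be stated as a lemma rather than taken for granted). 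In exchange, the paper's argument is more robust — boundedness of $\Gamma$ is the only regularity of $f$ ever used — while yours is more transparent about why $h$ solves the equation, since the ODE $-h''+Lh=zh$ appears explicitly. Your opening reduction, via $z \notin \sigma(\HDir) = [\min\sigma,\infty)$ and the bijectivity of $\Gamma\restr{\mathcal{N}^{1}(z)}$ onto $\ran(\Gamma)$, is exactly the content of \cite[Prp~2.9]{Post_I} that the paper also invokes, so that part is shared.
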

\begin{proof}
  The lemma is proven in two steps. First we show that $ \dom(L)
  \subset \ran(\Gamma) $ and~\eqref{Formel_Loesungsoperator_I} holds
  on $ \dom(L) $.  Then, by approximation, we obtain that $
  \dom(L^{1/4}) \subset \ran(\Gamma) $ and~\eqref{Formel_Loesungsoperator_I} holds on $ \dom(L^{1/4}) $.
      \paragraph{Step 1.}
      Let $ \varphi \in \dom(L) $ and let $ h = \int_{\sigma}^{\oplus}
      \exp \big( \im \sqrt{z - \lambda} \, \bullet \big)
      \varphi(\lambda) \dd\mu(\lambda) $.  By
      Lemma~\ref{Teilmengen_von_dom(h)__II} we know that $ h \in
      \mathcal{H}^{1} $ and hence $ \Gamma h = \varphi $.  It remains
      to show that $ h \in \mathcal{N}^{1}(z) $.  This is proven as
      follows:
			
      Let $\Phi \in \mathcal D_{\mathrm b} $.  A straightforward computation
      shows that
      \begin{align*}
        \mathfrak{h}(h, \Phi) 
        &= \langle h^{\prime}, \Phi^{\prime} 
        \rangle_{\mathcal{H}}
        + \int_{\mathbb{R}_{+}} 
        \langle L \big( h(t) \big), \Phi(t) \rangle_{\mathcal{G}} \dd t	\\
        &= z \langle h, \Phi \rangle_{\mathcal{H}} 
        - \im \, \int_{\sigma} 
        \langle \sqrt{z - \lambda} \, \varphi(\lambda), 
        (\Gamma \Phi)(\lambda) \rangle_{\mathcal{G}(\lambda)}
        \dd\mu(\lambda).
      \end{align*}
      Now let $ f \in \ker(\Gamma) $. Choose a sequence 
      $ (\Phi_{m}) \subset \mathcal D_{\mathrm b} $ with 
      $ \| f - \Phi_{m} \|_{\mathfrak{h}} 
      \xrightarrow{m \rightarrow \infty} 0 $. 
      Clearly $ \mathfrak{h}(h, \Phi_{m}) 
      \xrightarrow{m \rightarrow \infty} 
      \mathfrak{h}(h, f) $ and 
      $ z \, \langle h, \Phi_{m} \rangle_{\mathcal{H}} 
      \xrightarrow{m \rightarrow \infty} 
      z \, \langle h, f \rangle_{\mathcal{H}} $, 
      and an easy computation shows that
      $| {-} \im \, \int_{\sigma} 
      \langle \sqrt{z - \lambda} \, \varphi(\lambda), 
      (\Gamma \Phi_{m})(\lambda) \rangle_{\mathcal{G}(\lambda)} 
      \dd\mu(\lambda)|	 
      \xrightarrow{m \rightarrow \infty} 0 $. 
      It follows that $ h $ is in $ \mathcal{N}^{1}(z) $. 

    \paragraph{Step 2.}
    Let $ \varphi \in \dom(L^{1/4}) $ and let
		$ h = \int_{\sigma}^{\oplus} 
		\exp \big( \im \sqrt{z - \lambda} \, \bullet \big) 
		\varphi(\lambda) \dd\mu(\lambda) $. 
		Again, we know by Lemma~\ref{Teilmengen_von_dom(h)__II} 
		that $ h \in \mathcal{H}^{1} $ and hence 
		$ \Gamma h = \varphi $. 
			
		Now choose a sequence 
		$ (\varphi_{m}) \subset \dom(L) $ with 
		$ \| \varphi - \varphi_{m} \|_{L^{1/4}} 
		\xrightarrow{m \rightarrow \infty} 0 $. 
		By Step 1 we know that   
		$ h_{m} = \int_{\sigma}^{\oplus} 
		\exp \big( \im \sqrt{z - \lambda} \, \bullet \big) 
		\varphi_{m}(\lambda) \dd\mu(\lambda)
		\in \mathcal{N}^{1}(z) $ 
		for all $ m \in \mathbb{N} $, and~\eqref{Teilmengen_von_dom(h)__III} implies that 
		$ \| h - h_{m} \|_{\mathfrak{h}} 
		\xrightarrow{m \rightarrow \infty} 0 $. 
		Consequently, $ h \in \mathcal{N}^{1}(z) $.  
		This completes the proof of the lemma.
\end{proof}
The following proposition shows that 
$ \ran(\Gamma) \subset \dom(L^{1/4}) $ so, in fact, 
$ S(z) \restr{\dom(L^{1/4})} = S(z) $.
\begin{proposition}	\label{Berechnung von ran Gamma}
	One has $ \ran(\Gamma) \subset \dom(L^{1/4}) $.
\end{proposition}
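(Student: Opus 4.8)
The plan is to prove the stronger quantitative statement that the trace map is bounded from $\mathcal{H}^{1}$ into $\dom(L^{1/4})$; concretely I would establish
\[
  \| L^{1/4} \Gamma u \|_{\mathcal{G}}^{2} \leq \mathfrak{h}(u) \leq \| u \|_{\mathfrak{h}}^{2}
  \qquad\text{for all } u \in \mathcal{H}^{1},
\]
which at once yields $\Gamma u = u(0) \in \dom(L^{1/4})$ and hence $\ran(\Gamma) \subset \dom(L^{1/4})$. Since, under the standing Assumption, $L$ is multiplication by $\lambda$ on $\mathcal{G} = \int_{\sigma}^{\oplus} \mathcal{G}(\lambda)\dd\mu(\lambda)$, the operator $L^{1/4}$ acts as multiplication by $\lambda^{1/4}$, so the left-hand side equals $\int_{\sigma} \lambda^{1/2} \| u(0)(\lambda) \|_{\mathcal{G}(\lambda)}^{2}\dd\mu(\lambda)$. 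The whole argument thus reduces to a fibrewise trace estimate followed by an integration in $\lambda$.

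First I would pass to the direct integral. By Fubini one identifies $\mathcal{H} = \Lp[2]{\R_+,\mathcal{G}}$ with $\int_{\sigma}^{\oplus} \Lp[2]{\R_+,\mathcal{G}(\lambda)}\dd\mu(\lambda)$, writing $u_{\lambda} := u(\,\cdot\,)(\lambda)$. For $u \in \mathcal{H}^{1} \subset \Sob{\R_+,\mathcal{G}}$ the weak derivative respects the fibres, $u'(\,\cdot\,)(\lambda) = u_{\lambda}'$, and from $\int_{\sigma}\int_{0}^{\infty}(\| u_{\lambda}(t) \|^{2} + \| u_{\lambda}'(t) \|^{2})\dd t\,\dd\mu(\lambda) < \infty$ one gets $u_{\lambda} \in \Sob{\R_+,\mathcal{G}(\lambda)}$ for $\mu$-a.e.\ $\lambda$, with $u_{\lambda}(0) = u(0)(\lambda)$ (here all unlabelled norms are fibre norms).

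The core estimate is fibrewise and purely scalar. For $\mu$-a.e.\ $\lambda$ the continuous representative of $u_{\lambda}$ lies in $H^{1}$ on the half-line, so $\| u_{\lambda}(t) \|^{2} \to 0$ as $t \to \infty$, and the fundamental theorem of calculus gives
\[
  \| u_{\lambda}(0) \|^{2} = - \int_{0}^{\infty} \frac{\dd}{\dd t}\| u_{\lambda}(t) \|^{2}\dd t \leq 2 \int_{0}^{\infty} \| u_{\lambda}(t) \|\,\| u_{\lambda}'(t) \|\dd t.
\]
Applying Young's inequality $2ab \leq \lambda^{1/2} a^{2} + \lambda^{-1/2} b^{2}$ with the $\lambda$-dependent weight (for $\lambda > 0$; the case $\lambda = 0$ is trivial because then the prefactor $\lambda^{1/2}$ vanishes) and multiplying by $\lambda^{1/2}$ yields
\[
  \lambda^{1/2} \| u_{\lambda}(0) \|^{2} \leq \int_{0}^{\infty} \bigl( \lambda \| u_{\lambda}(t) \|^{2} + \| u_{\lambda}'(t) \|^{2} \bigr)\dd t.
\]
Integrating this nonnegative inequality over $\sigma$ and exchanging the order of integration by Tonelli, the right-hand side becomes exactly $\int_{0}^{\infty}( \| L^{1/2} u(t) \|_{\mathcal{G}}^{2} + \| u'(t) \|_{\mathcal{G}}^{2} )\dd t = \mathfrak{h}(u)$, which proves the displayed bound.

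I expect the main obstacle to be not the inequality itself but the bookkeeping of the direct-integral decomposition of $\mathcal{H}^{1}$: one must check measurability of $\lambda \mapsto u_{\lambda}$, that $u_{\lambda} \in H^{1}$ on a.e.\ fibre with derivative the fibre of $u'$, and — most delicately — that the trace commutes with the decomposition, i.e.\ $u_{\lambda}(0) = (\Gamma u)(\lambda)$ for a.e.\ $\lambda$. This last point I would secure by using that $\Gamma$ is bounded with respect to $\| {\bullet} \|_{\mathfrak{h}}$ (Lemma~\ref{Der_Randoperator_auf_D}), approximating $u$ by elements of the core $\mathcal D_{\mathrm b}$ — for which the identity is obvious on elementary tensors $\psi \otimes \phi$, since $(\psi \otimes \phi)(0)(\lambda) = \psi(0)\phi(\lambda)$ — and passing to the limit along a subsequence converging a.e.\ in the fibres. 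Everything else is the routine scalar computation above.
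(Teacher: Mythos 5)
Your proof is correct, but it takes a genuinely different route from the paper's. The paper never works fibrewise: it splits $\mathcal{H}^{1} = \mathcal{N}^{1}(-1) \dplus \ker(\Gamma)$ so that only traces of weak solutions matter, approximates $h \in \mathcal{N}^{1}(-1)$ by the projections $h_{m}$ of core elements, identifies each $h_{m}$ with the explicit exponential solution $\int_{\sigma}^{\oplus}\exp\bigl({-}(1+\lambda)^{1/2}\,{\bullet}\bigr)\varphi_{m}(\lambda)\dd\mu(\lambda)$, $\varphi_{m} = \Gamma h_{m} \in \dom(L)$ (Lemma~\ref{Loesungsoperator_I} plus injectivity of $\Gamma\restr{\mathcal{N}^{1}}$), extracts from this explicit formula the lower bound $\|h_{k}-h_{m}\|_{\mathfrak{h}}^{2} \geq \tfrac{1}{2}\int_{\sigma}(\lambda/(1+\lambda))^{1/2}\lambda^{1/2}\|\varphi_{k}(\lambda)-\varphi_{m}(\lambda)\|_{\mathcal{G}(\lambda)}^{2}\dd\mu(\lambda)$, and then invokes completeness of $\dom(L^{1/4})$ to place the limit $\Gamma h$ in $\dom(L^{1/4})$. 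You instead prove the stronger quantitative bound $\|L^{1/4}\Gamma u\|_{\mathcal{G}}^{2} \leq \mathfrak{h}(u)$ on all of $\mathcal{H}^{1}$ by an elementary weighted trace inequality in each fibre, with Young weight $\lambda^{1/2}$ (your scalar computation is correct, and consistency can be checked on the explicit exponential solutions). What your route buys: it bypasses the solution operator and the decomposition $\mathcal{N}^{1} \dplus \ker(\Gamma)$ altogether, and combined with Lemma~\ref{Der_Randoperator_auf_D} it exhibits $\Gamma$ as a bounded map from $\mathcal{H}^{1}$ into $\dom(L^{1/4})$ equipped with its graph norm, which is more than the proposition asserts. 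What it costs is exactly the bookkeeping you flag: one must verify that weak differentiation and the trace respect the direct-integral decomposition, a point the paper's argument avoids entirely because it only ever evaluates traces of explicitly given exponentials. Your plan for closing this gap is sound: testing against a countable family of scalar test functions gives $(u')_{\lambda} = (u_{\lambda})'$ for $\mu$-a.e.\ $\lambda$; and since $\|u-u_{m}\|_{\mathfrak{h}}^{2} \geq \int_{\sigma}\|u_{\lambda}-(u_{m})_{\lambda}\|_{\Sobsymb^{1}}^{2}\dd\mu(\lambda)$ for approximants $u_{m}$ from the core $\mathcal{D}_{\mathrm b}$ (for which the trace identity is obvious on elementary tensors), a subsequence of the fibres converges in the fibre Sobolev norm $\mu$-a.e., which together with $\Gamma u_{m} \to \Gamma u$ in $\mathcal{G}$ identifies $(\Gamma u)(\lambda) = u_{\lambda}(0)$ a.e.\ and lets the fibrewise estimate integrate up to the claimed bound.
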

\begin{proof}
	We decompose $ \mathcal{H}^{1} $ into the orthogonal sum of 
	$ \mathcal{N}^{1} = \mathcal{N}^{1}(-1) $ and 
	$ \ker(\Gamma) $. 
	Since $ \Gamma $ is linear it suffices to show that 
	$ \Gamma h \in \dom(L^{1/4}) $ for all $ h \in \mathcal{N}^{1} $.
	This is proven in four steps:		
		\paragraph{Step 1.} 
			Let $ h \in \mathcal{N}^{1} $. Choose a sequence 
			$ (\tilde{h}_{m}) \subset \mathcal D_{\mathrm b} $ with 
			$ \| h - \tilde{h}_{m} \|_{\mathfrak{h}} 
			\xrightarrow{m \rightarrow \infty} 0 $. Put
			\begin{equation*}
				h_{m} = P_{\mathcal{N}^{1}} \tilde{h}_{m}, 
				\quad m \in \mathbb{N},
			\end{equation*}
			where $ P_{\mathcal{N}^{1}} $ denotes the orthogonal projection of 
			$ \mathcal{H}^{1} $ onto $ \mathcal{N}^{1} $.
                        \paragraph{Step 2.} Let $ m \in \mathbb{N} $
                        and set $ \varphi_{m} = \Gamma h_{m} $. Then
                        one has:
			\begin{align*}
				\varphi_{m}
				= \Gamma  P_{\mathcal{N}^{1}} \tilde{h}_{m} 
				= \Gamma P_{\mathcal{N}^{1}} \tilde{h}_{m}
				+ \Gamma P_{\ker(\Gamma)} \tilde{h}_{m} 
				= \Gamma \tilde{h}_{m}  
				\in \dom(L),
			\end{align*}
			where $ P_{\ker(\Gamma)} $ denotes the orthogonal projection of 
			$ \mathcal{H}^{1} $ onto $ \ker(\Gamma) $.
			By Lemma~\ref{Loesungsoperator_I} we know that
			\begin{multline*}
				\int_{\sigma}^{\oplus} 
				\exp \big( {-} (1 + \lambda)^{1/2}  \bullet \big) 
				\varphi_{m}(\lambda)
				\dd\mu(\lambda) 
				\in \mathcal{N}^{1}
                                \qquad\text{and}\\
				\Gamma \Big(
				{\int_{\sigma}^{\oplus}} 
				\exp \big( {-} (1 + \lambda)^{1/2} \bullet \big) 
				\varphi_{m}(\lambda)
				\dd\mu(\lambda) 
				\Big)
				= \varphi_{m}.
			\end{multline*}
			Since $\Gamma \restr{\mathcal{N}^{1}} $ is injective we thus obtain:
			\begin{align*}
				h_{m}
				= \int_{\sigma}^{\oplus} 
				\exp \big( {-} (1 + \lambda)^{1/2}  \bullet \big) 
				\varphi_{m}(\lambda)
				\dd\mu(\lambda).
			\end{align*}
	\paragraph{Step 3.} Clearly
			$ \| h - h_{m} \|_{\mathfrak{h}}
			= \| P_{\mathcal{N}^{1}} (h - \tilde{h}_{m}) \|_{\mathfrak{h}}
			\leq \| h - \tilde{h}_{m} \|_{\mathfrak{h}}
			\xrightarrow{m \rightarrow \infty} 0 $.
			It follows that 
			\begin{align*}
				\| \Gamma h - \varphi_{m} \|_{\mathcal{G}}
				= \| \Gamma h - \Gamma h_{m} \|_{\mathcal{G}}
				\xrightarrow[m \rightarrow \infty]{} 0.
			\end{align*}
	\paragraph{Step 4.} 
			We already know that $ (h_{m})_m $ is a Cauchy sequence 
			with respect to $ \| {\bullet} \|_{\mathfrak{h}} $. 
			A straightforward computation shows that
			\begin{align*}
				\| h_{k} - h_{m} \|_{\mathfrak{h}}^{2}
				&\geq 
				\int_{\sigma} 
				\lambda 
				\| \varphi_{k}(\lambda) - 
				\varphi_{m}(\lambda) \|_{\mathcal{G}(\lambda)}^{2}
				\int_{\mathbb{R}_{+}} 
				\exp \big( {-}2 (1+\lambda)^{1/2} \, t \big)	\dd t \dd \mu(\lambda) 
\\
				&=
				\frac{1}{2}
				\int_{\sigma} 
				\Big(\frac{\lambda}{1 + \lambda}\Big)^{1/2}
                                  \cdot \lambda^{1/2}
				\| \varphi_{k}(\lambda) - 
				\varphi_{m}(\lambda) \|_{\mathcal{G}(\lambda)}^{2}
				\dd \mu(\lambda)
			\end{align*}
			for all $ k,m \in \mathbb{N} $.  
			Choose $ \lambda_{0} > 0 $ large enough such that $(\lambda/(1 + \lambda))^{1/2} \geq 1/2$ 
			for all $ \lambda \geq \lambda_{0} $.  
			Then we obtain:
			\begin{align*}
				\| \varphi_{k} - \varphi_{m} \|_{L^{1/4}}^{2}
				\leq
				\Big( 1 + \lambda_{0}^{1/2} \Big) 
				\| \varphi_{k} - \varphi_{m} \|_{\mathcal{G}}^{2}
				+ 4 \| h_{k} - h_{m} \|_{\mathfrak{h}}^{2}
				\xrightarrow{k,m \rightarrow \infty} 0.
			\end{align*}
			Since $ \dom(L^{1/4}) $ is complete with respect to 
			$ \| {\bullet} \|_{L^{1/4}} $ there exists 
			$ \varphi \in \dom(L^{1/4}) $ such that
			$ \| \varphi - \varphi_{m} \|_{L^{1/4}} 
			\xrightarrow{m \rightarrow \infty} 0 $. 
			Consequently, one has $ \Gamma h = \varphi 
			\in \dom(L^{1/4}) $, as claimed.
\end{proof}
\begin{remark}
	$ \Gamma $ is surjective if and only if $ L $ is bounded.
\end{remark}
We have thus computed the solution operator $ S(z) $ at every point $
z \in \mathbb{C} \setminus [\min \sigma ,\infty) $.  In particular, if
$ z = -1 $ and $ \varphi \in \dom(L^{1/4}) $ then~\eqref {eq:est.h}
tells us that
\begin{align*}	
	\| S(-1) \varphi \|_{\mathcal{H}}^{2}
	\leq \frac{1}{2} \| \varphi \|_{\mathcal{G}}^{2}.
\end{align*}
This inequality proves:
\begin{lemma}	\label{Randpaar_ist_elliptisch_regulaer}
	The boundary pair $ (\Gamma, \mathcal{G}) $ is 
	elliptically regular.
\end{lemma}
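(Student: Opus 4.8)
The plan is to verify the equivalent characterisation of elliptic regularity directly, namely to exhibit a constant $c>0$ with $\|S(-1)\varphi\|_{\mathcal{H}} \le c\,\|\varphi\|_{\mathcal{G}}$ for every $\varphi$ in $\mathcal{G}^{1/2} = \ran(\Gamma)$. All of the substantive work has in fact already been carried out in the preceding results, so the argument amounts to assembling them. By Proposition~\ref{Berechnung von ran Gamma} one has $\ran(\Gamma) \subset \dom(L^{1/4})$, and the remark following it records that consequently $S(z)\restr{\dom(L^{1/4})} = S(z)$. Hence for each $\varphi \in \ran(\Gamma)$ the element $S(-1)\varphi$ is given explicitly by formula~\eqref{Formel_Loesungsoperator_I} of Lemma~\ref{Loesungsoperator_I}, which is precisely the function $h$ of~\eqref{eq:def.h} at the point $z=-1$.

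The key step is then to specialise the a priori bound~\eqref{eq:est.h} from Lemma~\ref{Der_fortgesetzte_Loesungsoperator} to $z=-1$. There one has $|z|=1$ and $\Re(z)=-1$, so that $|z|-\Re(z)=2$ and the prefactor $\bigl(2^{1/2}(|z|-\Re(z))^{1/2}\bigr)^{-1}$ collapses to $(2^{1/2}\cdot 2^{1/2})^{-1} = \tfrac12$. The estimate therefore reads $\|S(-1)\varphi\|_{\mathcal{H}}^{2} \le \tfrac12\|\varphi\|_{\mathcal{G}}^{2}$, which is exactly the displayed inequality stated just before the lemma.

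Taking $c = 2^{-1/2}$ now yields $\|S(-1)\varphi\|_{\mathcal{H}} \le c\,\|\varphi\|_{\mathcal{G}}$ for all $\varphi \in \mathcal{G}^{1/2}$, i.e.\ the defining inequality for elliptic regularity. Equivalently, $S(-1)$ extends by continuity to a bounded operator $\map{\bar S}{\mathcal{G}}{\mathcal{H}}$, so the boundary pair $(\Gamma,\mathcal{G})$ is elliptically regular. Since each step is a direct citation, there is no genuine obstacle left at this stage: the real content lay in the explicit computation of $S(z)$ (Lemma~\ref{Loesungsoperator_I}), the identification of $\ran(\Gamma)$ (Proposition~\ref{Berechnung von ran Gamma}), and the integral bound~\eqref{eq:est.h}, all of which are already available.
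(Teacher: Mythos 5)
Your proposal is correct and follows exactly the paper's own argument: the paper likewise combines Lemma~\ref{Loesungsoperator_I} and Proposition~\ref{Berechnung von ran Gamma} with the estimate~\eqref{eq:est.h} specialised at $z=-1$ to obtain $\|S(-1)\varphi\|_{\mathcal{H}}^{2} \le \tfrac12 \|\varphi\|_{\mathcal{G}}^{2}$, which is precisely the displayed inequality the paper states just before the lemma with the words ``This inequality proves''. The computation of the constant ($|z|-\Re(z)=2$, prefactor $\tfrac12$) and the conclusion via the equivalent bounded-extension characterisation of elliptic regularity match the paper's reasoning step for step.
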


\subsection{The extended solution operator and its adjoint}
\label{subsec:ell.reg}

Let $ z \in \mathbb{C} \setminus [\min \sigma ,\infty) $.  According
to~\eqref {eq:est.h} we know that
\begin{align*}
	\mathcal{G} \ni \varphi \mapsto
	\int_{\sigma}^{\oplus} \exp(\im \sqrt{z-\lambda} \, \bullet)
	\varphi(\lambda) \dd\mu(\lambda)
	\in \mathcal{H}
\end{align*}
defines a bounded operator.  In the preceding subsection we have shown
that the solution operator $ S(z)\colon \ran(\Gamma) \rightarrow
\mathcal{H}^{1} \subset \mathcal{H} $ is given by $ S(z) \varphi =
\int_{\sigma}^{\oplus} \exp(\im \sqrt{z-\lambda} \, \bullet)
\varphi(\lambda) \dd\mu(\lambda) $.  As, by Lemma~\ref{Dichtheit
  insbesondere von ran Gamma}, $ \ran(\Gamma) $ is dense in $
\mathcal{G} $ we can extend this formula to all of $\mathcal G$:
\begin{lemma}	\label{Der_fortgesetzte_Loesungsoperator__II}
	If $ z \in \mathbb{C} \setminus [\min \sigma ,\infty) $ 
	then the unique bounded extension of $ S(z) $ to 
	$ \mathcal{G} $ is given by 
	\begin{equation*}
		\bar{S}(z)\colon \mathcal{G} \rightarrow \mathcal{H},
		\quad
		\bar{S}(z) \varphi
		= \int_{\sigma}^{\oplus} \exp(\im \sqrt{z-\lambda} \, \bullet)
		\varphi(\lambda) \dd\mu(\lambda).
\end{equation*}
\end{lemma}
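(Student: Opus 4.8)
The plan is a routine extension-by-density argument, and essentially all of the needed ingredients have already been assembled in the preceding subsections. I would first introduce the candidate operator $\map{T}{\mathcal{G}}{\mathcal{H}}$ defined by $T\varphi = \int_{\sigma}^{\oplus} \exp(\im\sqrt{z-\lambda}\,{\bullet})\varphi(\lambda)\dd\mu(\lambda)$ for \emph{all} $\varphi \in \mathcal{G}$, and check that it is a well-defined bounded linear operator. Linearity is immediate, and by Lemma~\ref{Der_fortgesetzte_Loesungsoperator} the integral defines an element of $\mathcal{H}$ for every $\varphi \in \mathcal{G}$. The crucial point is that the estimate \eqref{eq:est.h}, read as a bound uniform in $\varphi$, yields $\|T\varphi\|_{\mathcal{H}}^{2} \le C\|\varphi\|_{\mathcal{G}}^{2}$ with $C = (2^{1/2}(|z|-\Re z)^{1/2})^{-1}$; hence $T$ is bounded.

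Next I would identify $T$ with the solution operator on the range of $\Gamma$. By Proposition~\ref{Berechnung von ran Gamma} together with Lemma~\ref{Loesungsoperator_I} one has $\ran(\Gamma) = \dom(L^{1/4})$, and formula \eqref{Formel_Loesungsoperator_I} shows precisely that $S(z)\varphi = T\varphi$ for every $\varphi \in \ran(\Gamma)$. Thus $T$ is a bounded operator on $\mathcal{G}$ whose restriction to $\ran(\Gamma)$ coincides with $S(z)$.

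Finally, Lemma~\ref{Dichtheit insbesondere von ran Gamma} tells us that $\ran(\Gamma)$ is dense in $\mathcal{G}$. Since a bounded operator on a dense subspace has at most one bounded extension to the whole space, and since $T$ is such an extension, we conclude that $T = \bar{S}(z)$, which is the assertion of the lemma.

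There is essentially no real obstacle left at this stage: the only genuinely nontrivial facts—the boundedness estimate \eqref{eq:est.h}, the explicit formula for $S(z)$ on $\dom(L^{1/4})$, and the density of $\ran(\Gamma)$—have all been established earlier. The one subtlety worth a sentence is to read \eqref{eq:est.h} as a uniform operator-norm bound rather than as a mere pointwise finiteness statement for each fixed $\varphi$; once this is observed, boundedness and hence the uniqueness of the extension follow immediately.
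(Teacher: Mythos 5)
Your proof is correct and is essentially identical to the paper's own argument, which appears in the paragraph immediately preceding the lemma: boundedness of the candidate operator on all of $\mathcal{G}$ read off from the uniform estimate~\eqref{eq:est.h}, identification with $S(z)$ on $\ran(\Gamma)$ via Lemma~\ref{Loesungsoperator_I} and Proposition~\ref{Berechnung von ran Gamma}, and uniqueness of the bounded extension from the density of $\ran(\Gamma)$ in $\mathcal{G}$ (Lemma~\ref{Dichtheit insbesondere von ran Gamma}). Nothing is missing; your remark about reading~\eqref{eq:est.h} as an operator-norm bound rather than a pointwise finiteness statement is exactly the point the paper relies on.
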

Next we compute the adjoint of the extended solution operator.
\begin{lemma}	\label{Der_Adjungierte_des_fortgesetzten_Loesungsoperators}
	If $ z \in \mathbb{C} \setminus [\min \sigma ,\infty) $ 
	then the bounded operator
	$( \bar{S}(\bar{z}))^{\ast}\colon \mathcal{H} 
	\rightarrow \mathcal{G} $ acts on elementary tensors 
	as follows:
	\begin{align*}
		\big( \bar{S}(\bar{z}) \big)^{\ast} (\psi \otimes \eta)
		= \int_{\sigma}^{\oplus} 
		\Bigl(\int_{\mathbb{R}_{+}}
		\psi(t)
		\exp \! \big( \im \sqrt{z - \lambda} \, t \big)
                 \dd t \Bigr) \eta(\lambda)
                 \dd\mu(\lambda) 
	\end{align*}
	for all $ \psi \in \mathsf C_{\mathrm c}(\mathbb{R}_{+}) $ 
	and all $ \eta \in \mathcal{G} $. 
	Consequently, $( \bar{S}(\bar{z}))^{\ast} $ can be 
	evaluated explicitly on the dense subspace
	$ \mathsf C_{\mathrm c}(\mathbb{R}_{+}) \odot \mathcal{G} $ 
	of $ \mathcal{H} $.
\end{lemma}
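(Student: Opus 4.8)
The plan is to compute the adjoint directly from its defining relation, exploiting the explicit formula for $\bar S(\bar z)$ obtained in Lemma~\ref{Der_fortgesetzte_Loesungsoperator__II}. Fix $z \in \mathbb{C} \setminus [\min\sigma, \infty)$, $\varphi \in \mathcal{G}$, $\psi \in \mathsf C_{\mathrm c}(\mathbb{R}_{+})$ and $\eta \in \mathcal{G}$. Since $(\bar S(\bar z))^{\ast}$ is characterised by $\langle \bar S(\bar z)\varphi, \psi\otimes\eta\rangle_{\mathcal{H}} = \langle\varphi, (\bar S(\bar z))^{\ast}(\psi\otimes\eta)\rangle_{\mathcal{G}}$, I would start from the left-hand side and unfold it. Writing the $\mathcal{H}$-inner product as an integral over $t\in\mathbb{R}_{+}$ and the pointwise $\mathcal{G}$-inner product as an integral over $\lambda\in\sigma$ against $\dd\mu$, the left-hand side becomes a double integral whose integrand is $\exp(\im\sqrt{\bar z - \lambda}\,t)\,\overline{\psi(t)}\,\langle\varphi(\lambda),\eta(\lambda)\rangle_{\mathcal{G}(\lambda)}$.

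Next I would interchange the order of integration by Fubini's theorem. This is the one genuinely technical point, but it is easily settled: $\psi$ has compact support, so the $t$-integration runs over a bounded interval; the factor $|\exp(\im\sqrt{\bar z-\lambda}\,t)|\le 1$ is bounded, because $\Im\sqrt{\bar z - \lambda}\ge 0$ (the same positivity of the imaginary part of the square root that underlies the estimate \eqref{eq:est.h}); and $\lambda\mapsto\langle\varphi(\lambda),\eta(\lambda)\rangle_{\mathcal{G}(\lambda)}$ is $\dd\mu$-integrable by Cauchy--Schwarz. Hence the integrand is absolutely integrable on $\mathbb{R}_{+}\times\sigma$ and the two integrations may be swapped, producing an outer $\lambda$-integral whose inner scalar factor is $\int_{\mathbb{R}_{+}}\exp(\im\sqrt{\bar z-\lambda}\,t)\,\overline{\psi(t)}\dd t$.

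To read off $(\bar S(\bar z))^{\ast}(\psi\otimes\eta)$ I would move this scalar factor into the second slot of the $\mathcal{G}(\lambda)$-inner product, which replaces it by its complex conjugate $\int_{\mathbb{R}_{+}}\psi(t)\,\overline{\exp(\im\sqrt{\bar z-\lambda}\,t)}\dd t$. The key elementary identity here is $\overline{\sqrt{\bar z - \lambda}} = -\sqrt{z-\lambda}$ for the branch of the square root cut along the positive half-axis, valid since $\lambda$ is real and $z-\lambda\notin[0,\infty)$. This turns the conjugated exponential into $\exp(\im\sqrt{z-\lambda}\,t)$, yielding exactly the claimed kernel $\int_{\mathbb{R}_{+}}\psi(t)\exp(\im\sqrt{z-\lambda}\,t)\dd t$; comparison with $\langle\varphi, {\bullet}\rangle_{\mathcal{G}}$ then identifies $(\bar S(\bar z))^{\ast}(\psi\otimes\eta)$ with the asserted direct-integral element. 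I expect this sign bookkeeping for the square root to be the only place where care is really needed --- it is precisely what makes the adjoint of $\bar S(\bar z)$ (rather than of $\bar S(z)$) come out with $+\im\sqrt{z-\lambda}$ in the exponent. The closing assertion of the lemma is then immediate: $\mathsf C_{\mathrm c}(\mathbb{R}_{+})\odot\mathcal{G}$ is dense in $\mathcal{H}$ and $(\bar S(\bar z))^{\ast}$ is bounded, so the formula on elementary tensors determines the operator on the whole of $\mathcal{H}$.
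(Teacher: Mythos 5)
Your proof is correct and follows essentially the same route as the paper: both compute the adjoint from its defining relation against an arbitrary vector of $\mathcal{G}$, interchange the $t$- and $\lambda$-integrations by Fubini, and reduce the sign bookkeeping to the branch identity $\overline{\exp(\im\sqrt{\bar z-\lambda}\,t)}=\exp(\im\sqrt{z-\lambda}\,t)$. The only difference is cosmetic --- you pair $\bar S(\bar z)\varphi$ against $\psi\otimes\eta$ while the paper pairs $(\bar S(\bar z))^{\ast}(\psi\otimes\eta)$ against $\phi$, which is the same computation read in mirror image.
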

\begin{proof}
	Standard arguments show that
	\begin{align}	\label{Formel_Adjungierter__Existenz}
		\int_{\sigma}^{\oplus} 
		\Bigl(\int_{\mathbb{R}_{+}}
		\psi(t) 
		\exp \! \big( \im \sqrt{z - \lambda} \, t \big)
                 \dd t \Bigr) \eta(\lambda)
                 \dd \mu(\lambda) 
		\in \mathcal{G}.
	\end{align}
	Let $ \phi \in \mathcal{G} $. 
	By Fubini's theorem,
	\begin{align*}
		\big\langle \big( \bar{S}(\bar{z}) \big)^{\ast} (\psi \otimes \eta), 
		\phi \big\rangle_{\mathcal{G}} 
		&= \big\langle \psi \otimes \eta, 
		\bar{S}(\bar{z}) \phi \big\rangle_{\mathcal{H}}	\\
		&= \int_{\sigma} 
		\int_{\mathbb{R}_{+}}
		\Big\langle 
		\psi(t) \,
		\overline{\exp(\im \sqrt{\bar{z} - \lambda} \, t)}
                \, \eta(\lambda), 
		\phi(\lambda)
		\Big\rangle_{\mathcal{G}(\lambda)}
                 \dd t \dd\mu(\lambda).
	\end{align*}
	It is easily seen that $ \overline{\exp(\im \sqrt{\bar{z} - \lambda} \, t)}
	= \exp(\im \sqrt{z - \lambda} \, t) $.
	Therefore, \eqref{Formel_Adjungierter__Existenz} implies
	\begin{align*}
		\big\langle \big( \bar{S}(\bar{z}) \big)^{\ast} (\psi \otimes \eta), 
		\phi \big\rangle_{\mathcal{G}} 
		= \Big\langle 
		\int_{\sigma}^{\oplus}
                \Bigl(
		\int_{\mathbb{R}_{+}} 
		\psi(t) 
		\exp \! \big( \im \sqrt{z - \lambda} \, t \big)
                 \dd t \Bigr)
		\eta(\lambda)
                \dd\mu(\lambda) , 
		\phi
		\Big\rangle_{\mathcal{G}}.
	\end{align*}
	Since $ \phi \in \mathcal{G} $ was arbitrary this proves the lemma.
\end{proof}

\subsection{The Dirichlet-to-Neumann operator}
\label{subsec:dtn.op}

We can think of the Dirichlet-to-Neumann operator $ \Lambda(z) $ as
follows (see~\cite[top of p.\,1053]{Post_I}): it maps certain
boundary values $ \varphi \in \dom(\Lambda(z)) \subset \dom \big(
L^{1/4} \big) $ to the ``normal'' derivative $ \partial_{\mathrm{n}} h
$ of the corresponding Dirichlet solution $ h $.  In our situation
this means:
\begin{align*}
	\Lambda(z) \varphi
	&= - \left. \frac{\partial}{\partial t} 
	\Big( S(z) \varphi \Big) \right|_{t=0}	\\
	&= - \left.
	\int_{\sigma}^{\oplus} 
	\im \sqrt{z - \lambda} ~
	\exp \! \big( \im \sqrt{z - \lambda} \, t \big) 
	\varphi(\lambda) \dd\mu(\lambda)
	\right|_{t=0}	\\
	&= - \int_{\sigma}^{\oplus} 
	\im \sqrt{z - \lambda} ~
	\varphi(\lambda) \dd\mu(\lambda).
\end{align*}
As we will show in 
Lemma~\ref{Berechnung_des_Dirichlet-zu-Neumann-Operators} below, 
this formal computation indeed gives us the correct result.

Let $ z \in \mathbb{C} \setminus [\min \sigma ,\infty) $.
Define 
$	\mathfrak{l}_{z} 
: \dom \big( L^{1/4} \big) \times \dom \big( L^{1/4} \big)
\rightarrow \mathbb{C} $
by
\begin{align*}
	\mathfrak{l}_{z}(\varphi, \eta)
	= \mathfrak{h} \big( S(z)\varphi, S(-1)\eta \big)
	- z \langle S(z)\varphi, S(-1)\eta \rangle_{\mathcal{H}}.
\end{align*}
Then, by~\cite[Theorem 2.12]{Post_I}, $	\mathfrak{l}_{z} $ is a 
bounded form. 
We call $ \mathfrak{l}_{z} $ the \emph{Dirichlet-to-Neumann form}. 
One has:
\begin{lemma}	\label{Die_Dirichlet-zu-Neumann-Form}
	If $ z \in \mathbb{C} \setminus [\min \sigma ,\infty) $ then 
	$ \mathfrak{l}_{z} $ is given by
	\begin{align}	\label{Formel_Dirichlet-zu-Neumann-Form}
		\mathfrak{l}_{z}(\varphi, \eta)
		= \int_{\sigma} 
		\big( {-} \im \sqrt{z - \lambda} \big) 
		\langle \varphi(\lambda), \eta(\lambda) 
		\rangle_{\mathcal{G}(\lambda)}
		\dd\mu(\lambda) 
	\end{align}	
	for all $ \varphi, \eta \in \dom(L^{1/4}) $.
\end{lemma}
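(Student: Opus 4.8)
The plan is to verify~\eqref{Formel_Dirichlet-zu-Neumann-Form} first for $\varphi,\eta\in\dom(L)$, where the solution operator produces genuinely (strongly) differentiable $\mathcal{G}$-valued functions, and then to pass to general $\varphi,\eta\in\dom(L^{1/4})$ by density. Both sides of~\eqref{Formel_Dirichlet-zu-Neumann-Form} are bounded sesquilinear forms on $\dom(L^{1/4})$: the left-hand side by~\cite[Theorem 2.12]{Post_I}, and the right-hand side because $|\sqrt{z-\lambda}|$ grows at most like $(1+\lambda)^{1/2}$, which is controlled by $\| {\bullet} \|_{L^{1/4}}$ via Cauchy--Schwarz. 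Since $\dom(L)$ is a core for $L^{1/4}$, it suffices to establish the identity on $\dom(L)$.

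So I would fix $\varphi,\eta\in\dom(L)$ and write $h=S(z)\varphi$ and $g=S(-1)\eta$, which by Lemma~\ref{Loesungsoperator_I} are the direct integrals $h(t)=\int_\sigma^\oplus \exp(\im\sqrt{z-\lambda}\,t)\varphi(\lambda)\dd\mu(\lambda)$ and $g(t)=\int_\sigma^\oplus \exp(-(1+\lambda)^{1/2}\,t)\eta(\lambda)\dd\mu(\lambda)$; here one uses that the branch convention gives $\im\sqrt{-1-\lambda}=-(1+\lambda)^{1/2}$ for $\lambda\geq\min\sigma\geq 0$. For $\varphi,\eta\in\dom(L)$ these functions are strongly differentiable, and $h'$, $g'$, $L^{1/2}h$, $L^{1/2}g$ are again obtained by inserting the scalar multipliers $\im\sqrt{z-\lambda}$, $-(1+\lambda)^{1/2}$, $\lambda^{1/2}$, $\lambda^{1/2}$ under the direct integral. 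Computing the three contributions to $\mathfrak{l}_z(\varphi,\eta)=\mathfrak{h}(h,g)-z\langle h,g\rangle_{\mathcal{H}}$ fibrewise in $\lambda$, each pointwise inner product on $\mathcal{G}$ collapses to a $\mu$-integral of $\langle\varphi(\lambda),\eta(\lambda)\rangle_{\mathcal{G}(\lambda)}$ against the common factor $\exp\big((\im\sqrt{z-\lambda}-(1+\lambda)^{1/2})t\big)$, the (real) multipliers coming from $g$ causing no sign change under conjugation; collecting the scalar prefactors yields $-\im\sqrt{z-\lambda}\,(1+\lambda)^{1/2}+\lambda-z$.

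The decisive step is then to interchange the $t$-integration over $\mathbb{R}_+$ with the $\mu$-integration over $\sigma$. This is legitimate by Fubini's theorem, since $\Re(\im\sqrt{z-\lambda})\leq 0$ forces the integrand to be dominated, uniformly on $\sigma$, by $C\,(1+\lambda)\exp(-(1+\lambda)^{1/2}t)\,\|\varphi(\lambda)\|\,\|\eta(\lambda)\|$, which is integrable on $\mathbb{R}_+\times\sigma$. Carrying out the elementary exponential integral, $\int_0^\infty \exp\big((\im\sqrt{z-\lambda}-(1+\lambda)^{1/2})t\big)\dd t=\big((1+\lambda)^{1/2}-\im\sqrt{z-\lambda}\big)^{-1}$, gives
\begin{equation*}
	\mathfrak{l}_z(\varphi,\eta)
	= \int_\sigma
	\frac{-\im\sqrt{z-\lambda}\,(1+\lambda)^{1/2} + \lambda - z}
	{(1+\lambda)^{1/2} - \im\sqrt{z-\lambda}}\,
	\langle\varphi(\lambda),\eta(\lambda)\rangle_{\mathcal{G}(\lambda)}\dd\mu(\lambda).
\end{equation*}

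It remains to simplify the scalar multiplier. Writing $p=\im\sqrt{z-\lambda}$ and $b=(1+\lambda)^{1/2}$, the identity $p^2=-(z-\lambda)=\lambda-z$ turns the numerator into $-pb+p^2=p(p-b)$, while the denominator is $b-p$; hence the quotient equals $p(p-b)/(b-p)=-p=-\im\sqrt{z-\lambda}$. This is exactly~\eqref{Formel_Dirichlet-zu-Neumann-Form} on $\dom(L)$, and the density argument of the first paragraph completes the proof. I expect the only genuine obstacle to be bookkeeping rather than ideas: keeping the branch of the square root consistent (so that both the decay $\Re p\le 0$ and the final algebraic cancellation go through simultaneously) and justifying the strong differentiation and the Fubini interchange under the direct integral; the remaining manipulations are routine.
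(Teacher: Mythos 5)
Your proposal is correct and follows essentially the same route as the paper: an explicit Fubini computation on $\dom(L)$ (same exponential integral, same algebraic cancellation yielding $-\im\sqrt{z-\lambda}$), followed by passage to $\dom(L^{1/4})$ by density of $\dom(L)$. The paper phrases the density step as explicit sequence approximation, using that $\|\varphi-\varphi_m\|_{L^{1/4}}\to 0$ implies $\|S(z)\varphi-S(z)\varphi_m\|_{\mathfrak{h}}\to 0$ (its Lemma~\ref{Teilmengen_von_dom(h)__II}), which is exactly the norm-compatibility fact your ``bounded sesquilinear forms plus core'' argument implicitly relies on.
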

\begin{proof}
	The lemma is proven in two steps.
	First we show~\eqref{Formel_Dirichlet-zu-Neumann-Form}
	for $ \varphi, \eta \in \dom(L) $, 
	and then we complete the proof by approximation.
		\paragraph{Step 1.}
			Let $ \varphi, \eta \in \dom(L) $. 
			Using Lemma~\ref{Teilmengen_von_dom(h)__II} and 
			Fubini's theorem we compute:
			\begin{align*}
				\mathfrak{l}_{z}(\varphi, \eta)
				&= \int_{\sigma} 
				\langle \varphi(\lambda), \eta(\lambda) 
				\rangle_{\mathcal{G}(\lambda)} \cdot
                                \\
				&\qquad	\cdot \int_{\mathbb{R}_{+}}
				\exp \Big( \big( \im \sqrt{z - \lambda} 
				- (1+\lambda)^{1/2} \big) t \Big)
                                \dd t
				\left( \im \sqrt{z - \lambda} \, 
				(- (1+\lambda)^{1/2}) + \lambda - z \right)	
                               \dd\mu(\lambda) \\
				&= \int_{\sigma} 
				\big( {-} \im \sqrt{z - \lambda} \big) 
				\langle \varphi(\lambda), \eta(\lambda) 
				\rangle_{\mathcal{G}(\lambda)}
				\dd\mu(\lambda).
			\end{align*}

\paragraph{Step 2.} Let $ \varphi, \eta \in
                        \dom(L^{1/4}) $.  Choose two sequences $
                        (\varphi_{m}) \subset \dom(L) $ and $
                        (\eta_{m}) \subset \dom(L) $ such that $ \|
                        \varphi - \varphi_{m} \|_{L^{1/4}}
                        \xrightarrow{m \rightarrow \infty} 0 $ and $
                        \| \eta - \eta_{m} \|_{L^{1/4}} \xrightarrow{m
                          \rightarrow \infty} 0 $.
                        By~\eqref{Teilmengen_von_dom(h)__III} we know
                        that
                         \begin{equation*}
                            \| S(z) \varphi - S(z)\varphi_{m}
                           \|_{\mathfrak{h}} \xrightarrow{m
                             \rightarrow \infty} 0
                           \quadtext{and}
                           \| S(z)
                           \eta - S(z)\eta_{m} \|_{\mathfrak{h}}
                           \xrightarrow{m \rightarrow \infty} 0.
                         \end{equation*}
			Consequently,
			\begin{multline*}
				\mathfrak{h} \big( S(z)\varphi_{m}, S(-1)\eta_{m} \big)
				- z \langle S(z)\varphi_{m}, S(-1)\eta_{m}
				\rangle_{\mathcal{H}}	\\
				\xrightarrow{m \rightarrow \infty} 
				\mathfrak{h} \big( S(z) \varphi, S(-1) \eta \big)
				- z \langle S(z) \varphi, S(-1) \eta 
				\rangle_{\mathcal{H}}.
			\end{multline*}
			Furthermore a straightforward computation shows that
			\begin{align*}
				\int_{\sigma} 
				\big( {-} \im \sqrt{z - \lambda} \big) 
				\langle \varphi_{m}(\lambda), \eta_{m}(\lambda) 
				\rangle_{\mathcal{G}(\lambda)}
				\dd\mu(\lambda) 
				\xrightarrow{m \rightarrow \infty}
				\int_{\sigma} 
				\big( {-} \im \sqrt{z - \lambda} \big) 
				\langle \varphi(\lambda), \eta(\lambda) 
				\rangle_{\mathcal{G}(\lambda)}
				\dd\mu(\lambda).
			\end{align*}
			Thus~\eqref{Formel_Dirichlet-zu-Neumann-Form}
			holds and the lemma is proven.
\end{proof}
As the boundary pair $ (\Gamma, \mathcal{G}) $ is elliptically
regular, it follows from \cite[Theorem 3.8]{Post_I} that the
Dirichlet-to-Neumann form is closed and sectorial for all $ z \in
\mathbb{C} \setminus [\min \sigma ,\infty)$.  Let $ \Lambda(z) $ be
the closed operator associated with $ \mathfrak{l}_{z} $, i.\,e.,
\begin{align}	\label{Domain_Dirichlet-zu-Neumann-Operator}
	\dom \! \big( \Lambda(z) \big)
	= \big\{ \varphi \in \dom(L^{1/4}) :
	\exists \zeta \in \mathcal{G} \, \forall \eta \in \dom(L^{1/4}), \,
	\mathfrak{l}_{z}(\varphi, \eta) 
	= \langle \zeta, \eta \rangle_{\mathcal{G}} \big\}
\end{align}
and $ \Lambda(z) \varphi = \zeta $.
We call $ \Lambda(z) $ the 
\emph{Dirichlet-to-Neumann operator}. 
One has:
\begin{lemma}	\label{Dirichlet-zu-Neumann-Operators__I}
If $ z \in \mathbb{C} \setminus [\min \sigma ,\infty) $ then 
$ \dom \! \big( \Lambda(z) \big) \supset \dom(L^{1/2}) $ and
	\begin{align*}	
		\Lambda(z) \restr{\dom(L^{1/2})} \varphi
		= \int_{\sigma}^{\oplus} (- \im \, \sqrt{z - \lambda})
		\varphi(\lambda) \dd\mu(\lambda).
	\end{align*}
\end{lemma}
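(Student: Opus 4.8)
The plan is to verify directly the defining condition of $\dom(\Lambda(z))$ from~\eqref{Domain_Dirichlet-zu-Neumann-Operator}. Fix $z \in \mathbb{C}\setminus[\min\sigma,\infty)$ and let $\varphi \in \dom(L^{1/2})$. Since $L \ge 0$ we have $\dom(L^{1/2}) \subset \dom(L^{1/4})$, so $\varphi$ is an admissible argument and the explicit formula~\eqref{Formel_Dirichlet-zu-Neumann-Form} from Lemma~\ref{Die_Dirichlet-zu-Neumann-Form} is available. As candidate for the vector $\zeta$ in~\eqref{Domain_Dirichlet-zu-Neumann-Operator} I take
\begin{align*}
	\zeta = \int_\sigma^\oplus ({-}\im\sqrt{z-\lambda})\,\varphi(\lambda)\dd\mu(\lambda),
\end{align*}
which is precisely the asserted value of $\Lambda(z)\varphi$. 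It then remains to check two things: that $\zeta$ really defines an element of $\mathcal{G}$, and that $\mathfrak{l}_{z}(\varphi,\eta) = \langle\zeta,\eta\rangle_{\mathcal{G}}$ for every $\eta \in \dom(L^{1/4})$.

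The only genuinely analytic point, and the step I expect to be the main (if modest) obstacle, is the membership $\zeta \in \mathcal{G}$. Here I would use that $|{-}\im\sqrt{z-\lambda}|^2 = |z-\lambda|$ together with the crude bound $|z-\lambda| \le |z| + \lambda$, valid for $\lambda \in \sigma \subset [0,\infty)$. This gives
\begin{align*}
	\|\zeta\|_{\mathcal{G}}^2
	= \int_\sigma |z-\lambda|\,\|\varphi(\lambda)\|_{\mathcal{G}(\lambda)}^2\dd\mu(\lambda)
	\le |z|\,\|\varphi\|_{\mathcal{G}}^2 + \|L^{1/2}\varphi\|_{\mathcal{G}}^2 < \infty,
\end{align*}
where the last integral is finite precisely because $\varphi \in \dom(L^{1/2})$, i.e.\ $\int_\sigma \lambda\,\|\varphi(\lambda)\|^2_{\mathcal{G}(\lambda)}\dd\mu(\lambda) < \infty$. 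This is exactly where the hypothesis $\varphi \in \dom(L^{1/2})$ (rather than merely $\dom(L^{1/4})$) enters: it controls the linear growth of $|z-\lambda|$ in $\lambda$.

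Once $\zeta \in \mathcal{G}$ is secured, the identification is purely formal. For $\eta \in \dom(L^{1/4})$ I would read off, using that the inner product of $\mathcal{G}$ is the fibrewise integral and is linear in its first slot,
\begin{align*}
	\langle\zeta,\eta\rangle_{\mathcal{G}}
	= \int_\sigma \langle ({-}\im\sqrt{z-\lambda})\varphi(\lambda),\eta(\lambda)\rangle_{\mathcal{G}(\lambda)}\dd\mu(\lambda)
	= \int_\sigma ({-}\im\sqrt{z-\lambda})\langle\varphi(\lambda),\eta(\lambda)\rangle_{\mathcal{G}(\lambda)}\dd\mu(\lambda),
\end{align*}
and the right-hand side is exactly $\mathfrak{l}_{z}(\varphi,\eta)$ by~\eqref{Formel_Dirichlet-zu-Neumann-Form}. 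Hence $\varphi$ satisfies the condition in~\eqref{Domain_Dirichlet-zu-Neumann-Operator} with this $\zeta$, so $\varphi \in \dom(\Lambda(z))$ and $\Lambda(z)\varphi = \zeta$, which is the claimed formula. Since $\varphi \in \dom(L^{1/2})$ was arbitrary, this simultaneously proves the inclusion $\dom(L^{1/2}) \subset \dom(\Lambda(z))$.
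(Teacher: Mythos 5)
Your proposal is correct and follows essentially the same route as the paper's proof: take $\zeta = \int_{\sigma}^{\oplus}({-}\im\sqrt{z-\lambda})\,\varphi(\lambda)\dd\mu(\lambda)$ as the candidate, observe $\zeta \in \mathcal{G}$, and invoke Lemma~\ref{Die_Dirichlet-zu-Neumann-Form} to verify the defining condition~\eqref{Domain_Dirichlet-zu-Neumann-Operator}. The only difference is that you spell out the membership $\zeta \in \mathcal{G}$ via the bound $|z-\lambda| \le |z| + \lambda$ (which is exactly where $\varphi \in \dom(L^{1/2})$ is needed), a step the paper simply asserts.
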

\begin{proof}	
	Let $ \varphi \in \dom(L^{1/2}) $. Then 
	\begin{align*}
		\zeta =
		\int_{\sigma}^{\oplus} (- \im \, \sqrt{z - \lambda})
		\varphi(\lambda) \dd\mu(\lambda) 
		\quad
		\text{is in } \mathcal{G}.
	\end{align*}
	Therefore, Lemma~\ref{Die_Dirichlet-zu-Neumann-Form} implies that 
	$ \mathfrak{l}_{z}(\varphi, \eta) 
	= \langle \zeta, \eta \rangle_{\mathcal{G}} $
	for all $ \eta \in \dom(L^{1/4}) $.
	This proves the lemma.
\end{proof}
Furthermore it follows from~\cite[Theorem 3.8]{Post_I} that
$ \dom \! \big( \Lambda(z) \big) = \dom \! \big( \Lambda(-1) \big) $
is independent of $ z \in \mathbb{C} \setminus [\min \sigma ,\infty) $. 
The next lemma shows that 
$ \dom \! \big( \Lambda(-1) \big) \subset \dom(L^{1/2}) $ so,
in fact, $ \Lambda(z) \restr{\dom(L^{1/2})} = \Lambda(z) $.
\begin{lemma}	\label{Berechnung_des_Dirichlet-zu-Neumann-Operators}
	One has $ \dom \! \big( \Lambda(-1) \big) \subset \dom(L^{1/2}) $.
\end{lemma}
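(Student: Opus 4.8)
The plan is to combine the explicit formula for $\mathfrak{l}_{-1}$ from Lemma~\ref{Die_Dirichlet-zu-Neumann-Form} with a localisation argument in the direct integral. Since $\sigma \subset [0,\infty)$, the branch convention used throughout gives $\im\sqrt{-1-\lambda} = -(1+\lambda)^{1/2}$, so that $-\im\sqrt{z-\lambda}$ evaluated at $z=-1$ equals $(1+\lambda)^{1/2}$ and hence
\[
  \mathfrak{l}_{-1}(\varphi,\eta)
  = \int_{\sigma} (1+\lambda)^{1/2}
  \langle \varphi(\lambda),\eta(\lambda) \rangle_{\mathcal{G}(\lambda)}
  \dd\mu(\lambda)
\]
for all $\varphi,\eta \in \dom(L^{1/4})$. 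Let $\varphi \in \dom(\Lambda(-1))$ and let $\zeta \in \mathcal{G}$ be the element from~\eqref{Domain_Dirichlet-zu-Neumann-Operator} satisfying $\mathfrak{l}_{-1}(\varphi,\eta) = \langle\zeta,\eta\rangle_{\mathcal{G}}$ for all $\eta \in \dom(L^{1/4})$. I would show that the measurable section $g(\lambda) = (1+\lambda)^{1/2}\varphi(\lambda)$ coincides $\mu$-almost everywhere with $\zeta(\lambda)$; once this is established, $\|\zeta\|_{\mathcal{G}}^{2} = \int_{\sigma} (1+\lambda)\|\varphi(\lambda)\|_{\mathcal{G}(\lambda)}^{2}\dd\mu(\lambda) < \infty$ forces $\varphi \in \dom(L^{1/2})$.

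For the key step I would test against localised sections. For $N \in \N$ set $B_N = \sigma \cap [0,N]$ and, given any $\psi \in \mathcal{G}$, put $\eta = \1_{B_N}\psi$ (that is, $\eta(\lambda) = \psi(\lambda)$ on $B_N$ and $\eta(\lambda)=0$ elsewhere). Since $\lambda^{1/2} \leq N^{1/2}$ on $B_N$, one has $\int_{\sigma} \lambda^{1/2}\|\eta(\lambda)\|_{\mathcal{G}(\lambda)}^{2}\dd\mu(\lambda) \leq N^{1/2}\|\psi\|_{\mathcal{G}}^{2} < \infty$, so $\eta \in \dom(L^{1/4})$ is an admissible test vector; likewise $(1+\lambda)^{1/2} \leq (1+N)^{1/2}$ on $B_N$ shows that $g$ restricted to $B_N$ lies in $\mathcal{G}$. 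Substituting $\eta = \1_{B_N}\psi$ into $\mathfrak{l}_{-1}(\varphi,\eta) = \langle\zeta,\eta\rangle_{\mathcal{G}}$ yields
\[
  \int_{B_N} \langle g(\lambda) - \zeta(\lambda),\psi(\lambda)
  \rangle_{\mathcal{G}(\lambda)} \dd\mu(\lambda) = 0
  \qquad \text{for all } \psi \in \mathcal{G}.
\]
Choosing $\psi = \1_{B_N}(g-\zeta)$ gives $g = \zeta$ $\mu$-almost everywhere on $B_N$, and letting $N \to \infty$, so that $B_N \uparrow \sigma$, yields $g = \zeta$ $\mu$-almost everywhere on $\sigma$.

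Consequently $(1+\lambda)^{1/2}\varphi(\lambda) = \zeta(\lambda)$ for $\mu$-almost all $\lambda$, whence $\int_{\sigma} (1+\lambda)\|\varphi(\lambda)\|_{\mathcal{G}(\lambda)}^{2}\dd\mu(\lambda) = \|\zeta\|_{\mathcal{G}}^{2} < \infty$; since $\dom(L^{1/2})$ is precisely the set of those $\varphi \in \mathcal{G}$ for which this integral is finite, we obtain $\varphi \in \dom(L^{1/2})$, as claimed. Combined with Lemma~\ref{Dirichlet-zu-Neumann-Operators__I} this identifies $\Lambda(-1)$ with multiplication by $(1+\lambda)^{1/2}$, i.\,e.\ with $(L+1)^{1/2}$. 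I expect the only genuine subtlety to be the weak-to-strong passage inside the direct integral: one must check that the localised sections $\1_{B_N}\psi$ are measurable admissible test vectors and that testing against all of $\mathcal{G}$ on each $B_N$ suffices to deduce the $\mu$-almost-everywhere equality. Everything else follows routinely from the explicit form of $\mathfrak{l}_{-1}$.
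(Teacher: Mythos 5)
Your proof is correct. Both your argument and the paper's start from the explicit formula of Lemma~\ref{Die_Dirichlet-zu-Neumann-Form} (with the branch convention giving $-\im\sqrt{-1-\lambda}=(1+\lambda)^{1/2}$) and both reduce the lemma to the $\mu$-almost-everywhere identity $(1+\lambda)^{1/2}\varphi(\lambda)=\zeta(\lambda)$; the difference lies in how the unbounded weight $(1+\lambda)^{1/2}$ is tamed. The paper splits the weight: it tests against vectors of the form $\int_{\sigma}^{\oplus}(1+\lambda)^{-1/4}\eta(\lambda)\dd\mu(\lambda)$ with $\eta\in\dom(L^{1/4})$, so that the identity $\mathfrak{l}_{-1}(\varphi,\cdot)=\langle\zeta,\cdot\rangle_{\mathcal G}$ becomes $\bigl\langle\int_{\sigma}^{\oplus}(1+\lambda)^{1/4}\varphi(\lambda)\dd\mu(\lambda)-\int_{\sigma}^{\oplus}(1+\lambda)^{-1/4}\zeta(\lambda)\dd\mu(\lambda),\eta\bigr\rangle_{\mathcal G}=0$, where both direct-integral sections lie in $\mathcal G$ globally (the first because $\varphi\in\dom(L^{1/4})$, the second because the weight $(1+\lambda)^{-1/4}$ is bounded); density of $\dom(L^{1/4})$ in $\mathcal G$ then gives the a.e.\ identity in one stroke. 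You instead truncate in the spectral variable: on $B_N=\sigma\cap[0,N]$ the weight is bounded, every $\1_{B_N}\psi$ with $\psi\in\mathcal G$ is an admissible test vector, and the choice $\psi=\1_{B_N}(g-\zeta)$ yields the identity on $B_N$, after which exhaustion $N\to\infty$ finishes. Your localisation route is slightly more elementary and robust: it needs no cleverly chosen weight, does not invoke density of $\dom(L^{1/4})$, and would work verbatim with any power $L^{s}$ in place of $L^{1/2}$. The paper's weighting trick buys a global argument without exhaustion and exploits the quarter-power structure already present in the boundary-pair setup. Both are short and complete, and both yield, together with Lemma~\ref{Dirichlet-zu-Neumann-Operators__I}, the identification of $\Lambda(-1)$ with $(L+1)^{1/2}$.
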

\begin{proof}
	First we observe that for all $ \eta \in \dom(L^{1/4}) $ we have
	\begin{align*}
		\int_{\sigma}^{\oplus} 
		(1 + \lambda)^{-1/4}
		\eta(\lambda) \dd\mu(\lambda) 
		\in \dom(L^{1/4}).
	\end{align*}
	Now let $ \varphi \in \dom \! \big( \Lambda(-1) \big) $. 
	Choose $ \zeta \in \mathcal{G} $ according to~\eqref{Domain_Dirichlet-zu-Neumann-Operator}. 
	Then clearly 
	\begin{align*}
		\int_{\sigma}^{\oplus} 
		(1 + \lambda)^{-1/4}
		\zeta(\lambda) \dd\mu(\lambda) 
		\in \mathcal{G}
	\end{align*}
	and, since $ \dom \! \big( \Lambda(-1) \big) \subset \dom(L^{1/4}) $, 
	\begin{align*}
		\int_{\sigma}^{\oplus} 
		(1 + \lambda)^{1/4}
		\varphi(\lambda) \dd\mu(\lambda) 
		\in \mathcal{G}.
	\end{align*}
	Consequently, for all $ \eta \in \dom(L^{1/4}) $, 
	Lemma~\ref{Die_Dirichlet-zu-Neumann-Form} implies:
	\begin{align*}
		0 
		&= \mathfrak{l}_{-1} \Big( \varphi, \int_{\sigma}^{\oplus} 
		(1 + \lambda)^{-1/4}
		\eta(\lambda) \dd\mu(\lambda)  \Big)
		- \Big\langle \zeta, \int_{\sigma}^{\oplus} 
		(1 + \lambda)^{-1/4}
		\eta(\lambda) \dd\mu(\lambda) 
		\Big\rangle_{\mathcal{G}}	\\
		&= \Big\langle 
		\int_{\sigma}^{\oplus} 
		(1 + \lambda)^{1/4}
		\varphi(\lambda) \dd\mu(\lambda)
		- \int_{\sigma}^{\oplus} 
		(1 + \lambda)^{-1/4}
		\zeta(\lambda) \dd\mu(\lambda), 
		\eta
		\Big\rangle_{\mathcal{G}}.
	\end{align*}
	As $ \dom(L^{1/4}) $ is dense in $ \mathcal{G} $ we obtain that,
	for $ \dd \mu $-almost all $ \lambda $ in $ \sigma $,
	\begin{align*}
		(1 + \lambda)^{1/4} \varphi(\lambda)
		= (1 + \lambda)^{-1/4} \zeta(\lambda). 
	\end{align*}
	Therefore, $ \int_{\sigma}^{\oplus} 
	(1+\lambda)^{1/2} \, \varphi(\lambda) \dd\mu(\lambda)
	= \zeta \in \mathcal{G} $ and thus $ \varphi \in \dom(L^{1/2}) $, 
	as claimed.
\end{proof}
In particular, for all $ z \in \mathbb{C} \setminus [\min \sigma ,\infty) $,
the \emph{Neumann-to-Dirichlet operator}
\begin{align}	\label{Der_Neumann-zu-Dirichlet-Operator}
	\Lambda(z)^{-1}\colon \mathcal{G} \rightarrow \mathcal{G},	\quad
	\Lambda(z)^{-1} \varphi
	= \int_{\sigma}^{\oplus} 
	\frac{\im}{\sqrt{z - \lambda}}
	\varphi(\lambda) \dd\mu(\lambda),
\end{align}
is bounded.

\subsection{A Krein-type resolvent formula}
\label{subsec:krein.res}

We have now computed the extended solution operator as well as 
its adjoint and the Neumann-to-Dirichlet operator. 
Putting these results together we obtain, 
since the boundary pair $ (\Gamma, \mathcal{G}) $ is 
elliptically regular, 
the following Krein-type resolvent formula for
$ (H - z)^{-1} - (\HDir - z)^{-1} $. 
\begin{proposition}	
\label{Darstellungsformel_fuer_die_Differenz_der_Resolventen}
	Let $ z \in \mathbb{C} \setminus [\min \sigma, \infty) $. 
	Then $ (H - z)^{-1} - (\HDir - z)^{-1} 
	: \mathcal{H} \rightarrow \mathcal{H} $ satisfies
	\begin{align}	\label{Formel_Resolventendifferenz}
		(H - z)^{-1} - (\HDir - z)^{-1}
		=
		\bar{S}(z) \Lambda(z)^{-1} \big( \bar{S}(\bar{z}) \big)^{\ast}.
	\end{align}
	This operator acts on elementary tensors as follows:
	\begin{align*}
		\Big( 
		\bar{S}(z) \Lambda(z)^{-1} \big( \bar{S}(\bar{z}) \big)^{\ast} 
		(\psi \otimes \varphi)
		\Big)(t)	
		= \int_{\sigma}^{\oplus} 
		\frac{\im}{\sqrt{z-\lambda}}
		\varphi(\lambda)
		\int_{\mathbb{R}_{+}}
		\psi(\tau)
		\exp \! \big( \im \sqrt{z-\lambda} \, (t+\tau) \big)
                 \dd\tau \,\dd\mu(\lambda)
	\end{align*}
	for all $ \psi \in \mathsf C_{\mathrm c}(\mathbb{R}_{+}) $ 
	and all $ \varphi \in \mathcal{G} $. 
	Consequently, the difference of the resolvents from~\eqref{Formel_Resolventendifferenz} can be 
	evaluated explicitly on the dense subspace
	$ \mathsf C_{\mathrm c}(\mathbb{R}_{+}) \odot \mathcal{G} $ 
	of $ \mathcal{H} $.
\end{proposition}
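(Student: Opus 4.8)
The plan is to read off the operator identity~\eqref{Formel_Resolventendifferenz} from the general Krein-type formula of the boundary-pair framework, and then to obtain the explicit action on elementary tensors by composing the three formulas already computed in the preceding subsections. No new analysis should be required; the work is entirely bookkeeping.

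First I would establish~\eqref{Formel_Resolventendifferenz}. Since the boundary pair $(\Gamma,\mathcal{G})$ is elliptically regular by Lemma~\ref{Randpaar_ist_elliptisch_regulaer}, the abstract resolvent formula $(H-z)^{-1}-(\HDir-z)^{-1}=\bar S(z)\Lambda(z)^{-1}\bigl(\bar S(\bar z)\bigr)^{*}$ of \cite[Thm.~4.2~(ii)]{Post_I} applies verbatim to $H$ and $\HDir$ for every $z\in\mathbb{C}\setminus[\min\sigma,\infty)$. Here the three factors are exactly the operators already identified: $\bar S(z)$ is the extended solution operator of Lemma~\ref{Der_fortgesetzte_Loesungsoperator__II}, $\bigl(\bar S(\bar z)\bigr)^{*}$ its adjoint from Lemma~\ref{Der_Adjungierte_des_fortgesetzten_Loesungsoperators}, and $\Lambda(z)^{-1}$ the bounded Neumann-to-Dirichlet operator of~\eqref{Der_Neumann-zu-Dirichlet-Operator}. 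This yields the identity with no further argument.

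For the explicit kernel I would evaluate the composite on an elementary tensor $\psi\otimes\varphi$, with $\psi\in\mathsf C_{\mathrm c}(\mathbb{R}_{+})$ and $\varphi\in\mathcal{G}$, applying the three operators from right to left. By Lemma~\ref{Der_Adjungierte_des_fortgesetzten_Loesungsoperators}, the element $\bigl(\bar S(\bar z)\bigr)^{*}(\psi\otimes\varphi)\in\mathcal{G}$ has fiber $\bigl(\int_{\mathbb{R}_{+}}\psi(\tau)\exp(\im\sqrt{z-\lambda}\,\tau)\dd\tau\bigr)\varphi(\lambda)$ at $\lambda$, which indeed lies in $\mathcal{G}$ by~\eqref{Formel_Adjungierter__Existenz}. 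Applying the bounded operator $\Lambda(z)^{-1}$ from~\eqref{Der_Neumann-zu-Dirichlet-Operator} multiplies each fiber by $\im/\sqrt{z-\lambda}$, and applying $\bar S(z)$ from Lemma~\ref{Der_fortgesetzte_Loesungsoperator__II} then multiplies the $\lambda$-fiber by $\exp(\im\sqrt{z-\lambda}\,t)$ and integrates against $\dd\mu$. Merging the two exponentials via $\exp(\im\sqrt{z-\lambda}\,t)\exp(\im\sqrt{z-\lambda}\,\tau)=\exp(\im\sqrt{z-\lambda}\,(t+\tau))$ produces precisely the claimed formula, and the density of $\mathsf C_{\mathrm c}(\mathbb{R}_{+})\odot\mathcal{G}$ in $\mathcal{H}$ gives the final assertion.

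I expect no genuine analytic obstacle here, because all three factors $\bar S(z)$, $\Lambda(z)^{-1}$, and $\bigl(\bar S(\bar z)\bigr)^{*}$ are bounded, so there are no domain issues in forming the composition or in handing the intermediate vector to the next operator. The only points requiring a little care are the fiberwise manipulation inside the von Neumann direct integral and the interchange of the $t$- and $\lambda$-integrations; both are already covered by the cited lemmas (in particular by the Fubini argument underlying Lemma~\ref{Der_Adjungierte_des_fortgesetzten_Loesungsoperators} and the integrability estimate~\eqref{eq:est.h}). Thus the main, and essentially only, task is the careful tracking of the three compositions.
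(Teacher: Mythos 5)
Your proposal is correct and follows essentially the same route as the paper: the identity~\eqref{Formel_Resolventendifferenz} is obtained from elliptic regularity (Lemma~\ref{Randpaar_ist_elliptisch_regulaer}) together with the abstract Krein-type formula of~\cite{Post_I}, and the explicit action on elementary tensors is read off by composing Lemma~\ref{Der_Adjungierte_des_fortgesetzten_Loesungsoperators}, the Neumann-to-Dirichlet operator~\eqref{Der_Neumann-zu-Dirichlet-Operator}, and Lemma~\ref{Der_fortgesetzte_Loesungsoperator__II}. Your right-to-left fiberwise computation simply spells out what the paper states ``follows directly'' from these three ingredients.
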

\begin{proof}
  By Lemma~\ref{Randpaar_ist_elliptisch_regulaer} we know that the
  boundary pair $ (\Gamma, \mathcal{G}) $ is elliptically regular.
  Therefore,~\cite[Theorem 1.2]{Post_I}
  implies~\eqref{Formel_Resolventendifferenz}.  The explicit
  representation of~\eqref{Formel_Resolventendifferenz} on $ \mathsf
  C_{\mathrm c}(\mathbb{R}_{+}) \odot \mathcal{G} $ follows directly
  from Lemma~\ref{Der_fortgesetzte_Loesungsoperator__II},
  \eqref{Der_Neumann-zu-Dirichlet-Operator}, and
  Lemma~\ref{Der_Adjungierte_des_fortgesetzten_Loesungsoperators}.
\end{proof}

\subsection{Explicit formulas for the boundary pair of the generalised
  half-space problem}
\label{sec:expl.formula.bd2}

Let us summarise the explicit formulas we have found for the boundary
pair of the generalised half-space problem, written in a more handy
version without refering to the direct integral representation of $L$:
\begin{proposition}
  \label{Resultate_zum_Randpaar-Modell}	
  Let $ z \in \mathbb{C} \setminus [\min \sigma, \infty) $. One has:
  \begin{enumerate}
  \item The solution operator $ S(z)\colon \dom(L^{1/4}) \rightarrow
    \mathcal{H}^{1} $ is given by\footnote{In the case when $ L $ is
      bounded, cf.\,\cite[equation (4.3)]{Malamud_Neidhardt}.}
    \begin{equation*}
      \big( S(z) \varphi \big)(t) 
      = \exp \! \big( \mathrm{i} \sqrt{z - L} \, t \big) 
			\varphi.
    \end{equation*}
    In particular, $ \| S(-1) \varphi \|_{\mathcal{H}}^{2}
    \leq \frac{1}{2} \| \varphi \|_{\mathcal{G}}^{2} $ for every 
    $ \varphi \in \dom(L^{1/4}) $ so $ (\Gamma, \mathcal{G}) $ is 
    an elliptically regular boundary pair.
  \item The Dirichlet-to-Neumann operator $ \Lambda(z)\colon
    \dom(L^{1/2}) \rightarrow \mathcal{G} $ is given by\footnote{In
      the case when $ L $ is bounded, cf.\,\cite[Lemma
      4.2]{Malamud_Neidhardt}.}
    \begin{equation*}
      \Lambda(z) \varphi  
      = \mathrm{i} \sqrt{z - L} \, \varphi.
    \end{equation*}
	\item \label{Res.diff._allg.z_allg.L}
		The difference of the resolvents of $ H $ and $ \HDir $ 
		acts on elementary tensors as follows:
    \begin{multline*}
      \Big( 
			\big[ (H-z)^{-1} - (\HDir-z)^{-1} \big]
			(\psi \otimes \varphi) 
      \Big)(t)	\\
      = \im \int_{\mathbb{R}_{+}} 
      \psi(\tau) \exp \! \big( \im \sqrt{z-L} (t+\tau) \big)
      (\sqrt{z-L})^{-1} \varphi \dd \tau
    \end{multline*}
    for all $ \psi \in \mathsf C_{\mathrm c}(\mathbb{R}_{+}) $ 
    and all $ \varphi \in \mathcal{G} $.
  \end{enumerate}
\end{proposition}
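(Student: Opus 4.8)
The plan is to obtain all three formulas from the direct-integral versions already established, by transferring them along the unitary equivalence supplied by Theorem~\ref{Theorem_von_Neumann_direct_integral}. Since Subsection~\ref{sec:expl.formula.bd2} is the one place where $L$ is \emph{not} assumed to be a multiplication operator, the content of the proposition is exactly to rewrite the representation-dependent formulas of Lemmas~\ref{Loesungsoperator_I}, \ref{Der_fortgesetzte_Loesungsoperator__II} and~\ref{Dirichlet-zu-Neumann-Operators__I} and of Proposition~\ref{Darstellungsformel_fuer_die_Differenz_der_Resolventen} in terms of the functional calculus of $L$.

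First I would fix a unitary $\map{U}{\mathcal{G}}{\int_{\sigma}^{\oplus}\mathcal{G}(\lambda)\dd\mu(\lambda)}$ such that $\tilde{L} := ULU^{-1}$ is multiplication by the independent variable, as granted by Theorem~\ref{Theorem_von_Neumann_direct_integral}. The key observation is that the entire boundary-pair construction is covariant under the fibrewise unitary $\id\otimes U$ on $\mathcal{H}=\Lsqr{\R_+}\otimes\mathcal{G}$: it maps the form domain $\mathcal{H}^{1}$ isometrically (for the respective form norms) onto the analogous domain built from $\tilde{L}$, it intertwines $\mathfrak{h}$ with the corresponding form, and it commutes with the boundary map $\Gamma u=u(0)$. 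Hence it carries $H$, $\HDir$, the solution operator $S(z)$, the Dirichlet-to-Neumann operator $\Lambda(z)$ and the resolvent difference into their counterparts for $\tilde{L}$, so that it suffices to verify (a)--(c) for $\tilde{L}$ and then conjugate back by $\id\otimes U$.

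For $\tilde{L}$ the cited lemmas and Proposition~\ref{Darstellungsformel_fuer_die_Differenz_der_Resolventen} already express the three operators as direct integrals, for instance $(S(z)\varphi)(t)=\int_{\sigma}^{\oplus}\exp(\im\sqrt{z-\lambda}\,t)\varphi(\lambda)\dd\mu(\lambda)$. Now I would invoke the functional calculus in the direct-integral representation: since $\tilde{L}$ is multiplication by $\lambda$, every Borel function $f$ of $\tilde{L}$ acts as multiplication by $f(\lambda)$. Applying this with $f(\lambda)=\exp(\im\sqrt{z-\lambda}\,t)$ (bounded for $t>0$, because $\Im\sqrt{z-\lambda}>0$ on $\sigma$ for $z\notin[\min\sigma,\infty)$, which is precisely the estimate behind~\eqref{eq:est.h}), with $f(\lambda)=\im\sqrt{z-\lambda}$ (whose natural domain $\dom(\tilde{L}^{1/2})$ matches $\dom(\Lambda(z))=\dom(L^{1/2})$ by Lemma~\ref{Berechnung_des_Dirichlet-zu-Neumann-Operators}), and with $f(\lambda)=\im/\sqrt{z-\lambda}$ for the resolvent difference, identifies each direct integral with the asserted expression. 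Conjugating back by $\id\otimes U$ turns $f(\tilde{L})$ into $f(L)$ and yields (a), (b) and (c); the elliptic-regularity estimate $\|S(-1)\varphi\|_{\mathcal{H}}^{2}\le\frac12\|\varphi\|_{\mathcal{G}}^{2}$ is unitarily invariant and transfers in the same way, recovering Lemma~\ref{Randpaar_ist_elliptisch_regulaer}.

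I expect the only genuine point requiring care to be this covariance step, i.e.\ checking that $\id\otimes U$ really respects the form domains and the boundary operator so that it intertwines the two boundary-pair constructions; once that is secured, the remainder is a direct reading of the spectral theorem. A minor additional bookkeeping item is the correct branch of $\sqrt{z-\lambda}$ together with the boundedness and domain statements it entails, but these are exactly the ones already in force in the preceding subsections.
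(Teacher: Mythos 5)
Your proposal is correct and takes essentially the same route as the paper: the paper's own proof is precisely the one-sentence assertion that the results of Lemma~\ref{Loesungsoperator_I}, Lemma~\ref{Dirichlet-zu-Neumann-Operators__I}, Proposition~\ref{Darstellungsformel_fuer_die_Differenz_der_Resolventen}, and the related domain lemmas carry over to general $L$ via Theorem~\ref{Theorem_von_Neumann_direct_integral} and the functional calculus. Your explicit verification that the fibrewise unitary $\id\otimes U$ intertwines the form domains, the boundary map, and hence all derived objects simply fills in the covariance step that the paper leaves implicit.
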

\begin{proof}
  The results from Lemma~\ref{Loesungsoperator_I},
  Proposition~\ref{Berechnung von ran Gamma},
  Lemma~\ref{Randpaar_ist_elliptisch_regulaer},
  Lemma~\ref{Dirichlet-zu-Neumann-Operators__I},
  Lemma~\ref{Berechnung_des_Dirichlet-zu-Neumann-Operators}, and
  Proposition~\ref{Darstellungsformel_fuer_die_Differenz_der_Resolventen}
  carry over to the situation when $L$ is not necessarily a
  multiplication operator, using
  Theorem~\ref{Theorem_von_Neumann_direct_integral} and the functional
  calculus.
\end{proof}
\begin{proof}[Proof of Theorem~\ref{thm:main1}~\eqref{thm:main1__Teil(a)}]
  Set $z=-1$ in
  Proposition~\ref{Resultate_zum_Randpaar-Modell}~\eqref{Res.diff._allg.z_allg.L}.
\end{proof}

%
\section{A formula with separated variables for the difference of the
  resolvents}
\label{sec:main.thm.1}
%

In this section we will establish 
Theorem~\ref{Resolventendifferenz_unitaer_aequivalent}.  The outline
of the proof is as follows:
\paragraph{Step 1.}
We change the order of evaluation with respect to the variables $ t
\in \mathbb{R}_{+} $ and $ \lambda \in \sigma $ in the representation
formula from
Proposition~\ref{Darstellungsformel_fuer_die_Differenz_der_Resolventen}.
Then, for $ \dd \mu $-almost all $ \lambda $ in $ \sigma $, we will
obtain a vector-valued Hankel-type integral operator.
\paragraph{Step 2.}
The application of a scaling transformation will lead to a unitarily
equivalent representation of~\eqref{Formel_Resolventendifferenz} with
seperated variables, as claimed.

Step 1 will be performed in 
Subsection~\ref{subsec:main_thm_Step_1} and
Step 2 will be performed in 
Subsection~\ref{subsec:main_thm_Step_2}.
Finally, in 
Subsection~\ref{subsec:deduce_corollary_from_main_thm}, 
we will deduce 
Corollary~\ref{corollary_from_main_thm__spectral.properties} from 
Theorem~\ref{Resolventendifferenz_unitaer_aequivalent}.

\subsection{Proof of
  Theorem~\ref{Resolventendifferenz_unitaer_aequivalent}. Step 1}
\label{subsec:main_thm_Step_1}

First, we observe that
\begin{align*}
  W\colon 
  \mathsf C_{\mathrm c}(\mathbb{R}_{+}) \odot \mathcal{G} 
  \subset \mathcal{H}
  \rightarrow 
  \int_{\sigma}^{\oplus} 
  \mathsf L_2(\R_+)\otimes \mathcal{G}(\lambda)
  \dd\mu(\lambda), 
  \quad
  W(\psi \otimes \varphi) 
  = \int_{\sigma}^{\oplus} \psi \otimes \varphi(\lambda)
  \dd\mu(\lambda),
\end{align*}
defines an isometric operator with dense range.  We denote the unique
bounded extension of $W$ to $ \mathcal{H} $ by the same symbol $W$.
Obviously, $ W $ is a unitary operator from $ \mathcal{H} $ onto $
\int_{\sigma}^{\oplus} \mathsf L_2(\R_+) \otimes \mathcal{G}(\lambda)
\dd\mu(\lambda) $.  The similarity transformation with respect to the
natural unitary operator $ W $ leads to the expected result:
\begin{lemma}
  \label{lem:res.diff}
  If $ z \in \mathbb{C} \setminus [\min \sigma ,\infty) $ then, for
  all $ \psi \in \mathsf C_{\mathrm c}(\mathbb{R}_{+}) $ and all $
  \varphi \in \mathcal{G} $, one has
  \begin{align*}
    \Big( W 
    \bar{S}(z) \Lambda(z)^{-1} \big( \bar{S}(\bar{z}) \big)^{\ast}
    (\psi \otimes \varphi)
    \Big)(\lambda)	
    = 
    \frac{\im}{\sqrt{z-\lambda}}
    \int_{\mathbb{R}_{+}} 
    \psi(\tau)
    \exp \! \big( \im \sqrt{z-\lambda} \, (\bullet+\tau) \big)
    \dd\tau
    \otimes \varphi(\lambda)
  \end{align*}
  for $ \dd \mu $-almost all $ \lambda $ in $ \sigma $.
\end{lemma}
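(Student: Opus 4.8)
The plan is to read the assertion off the explicit formula of Proposition~\ref{Darstellungsformel_fuer_die_Differenz_der_Resolventen}, recognising that applying $W$ only interchanges the roles of the variable $t\in\mathbb{R}_{+}$ and the direct-integral variable $\lambda\in\sigma$. Write $u=\bar{S}(z)\Lambda(z)^{-1}(\bar{S}(\bar{z}))^{\ast}(\psi\otimes\varphi)$ for fixed $\psi\in\mathsf C_{\mathrm c}(\mathbb{R}_{+})$ and $\varphi\in\mathcal{G}$. That proposition presents the value $u(t)\in\mathcal{G}$ as a direct integral over $\sigma$, so that for $\dd\mu$-almost all $\lambda$ its fibre is
\begin{align*}
  u(t)(\lambda)
  = \frac{\im}{\sqrt{z-\lambda}}\,\varphi(\lambda)
  \int_{\mathbb{R}_{+}} \psi(\tau)
  \exp\!\big(\im\sqrt{z-\lambda}\,(t+\tau)\big)\dd\tau .
\end{align*}

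First I would record the fibrewise action of $W$ on an arbitrary $u\in\mathcal{H}$: the fibre $(Wu)(\lambda)\in\mathsf L_2(\mathbb{R}_{+})\otimes\mathcal{G}(\lambda)=\mathsf L_2(\mathbb{R}_{+},\mathcal{G}(\lambda))$ is the function $t\mapsto u(t)(\lambda)$. On elementary tensors this is precisely the definition of $W$, since $(\psi\otimes\varphi)(t)(\lambda)=\psi(t)\varphi(\lambda)=(\psi\otimes\varphi(\lambda))(t)$; for general $u$ it follows from the canonical Fubini identification $\mathcal{H}=\mathsf L_2(\mathbb{R}_{+},\mathcal{G})\cong\int_{\sigma}^{\oplus}\mathsf L_2(\mathbb{R}_{+})\otimes\mathcal{G}(\lambda)\dd\mu(\lambda)$ together with the density of $\mathsf C_{\mathrm c}(\mathbb{R}_{+})\odot\mathcal{G}$ and the boundedness (indeed unitarity) of $W$. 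Inserting the expression for $u(t)(\lambda)$ then gives, for $\dd\mu$-almost all $\lambda$,
\begin{align*}
  (Wu)(\lambda)
  = \frac{\im}{\sqrt{z-\lambda}}
  \int_{\mathbb{R}_{+}} \psi(\tau)
  \exp\!\big(\im\sqrt{z-\lambda}\,(\bullet+\tau)\big)\dd\tau
  \otimes \varphi(\lambda),
\end{align*}
which is the claimed identity, with the scalar $\bullet$-function as the $\mathsf L_2(\mathbb{R}_{+})$-factor and $\varphi(\lambda)$ as the $\mathcal{G}(\lambda)$-factor of the tensor.

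The step I expect to be the main obstacle is the measure-theoretic justification underlying the fibrewise action of $W$: one must check that for $\dd\mu$-almost all $\lambda$ the map $t\mapsto u(t)(\lambda)$ indeed lies in $\mathsf L_2(\mathbb{R}_{+})\otimes\mathcal{G}(\lambda)$ and that reading off this fibre is the legitimate fibre decomposition of $Wu$, i.e.\ a Fubini/Tonelli interchange of the $t$- and $\lambda$-integrations. This is made rigorous by absolute convergence: for $z\in\mathbb{C}\setminus[\min\sigma,\infty)$ and $\lambda\in\sigma$ one has $z-\lambda\notin[0,\infty)$, hence $\Im\sqrt{z-\lambda}>0$, so the exponential decays in $t$; combined with the compact support of $\psi$ this is exactly the estimate behind~\eqref{eq:est.h} in Lemma~\ref{Der_fortgesetzte_Loesungsoperator}, which supplies the square-integrability in $t$ and the $\dd\mu$-integrability in $\lambda$ needed to apply Fubini's theorem.
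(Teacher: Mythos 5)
Your proposal is correct and follows essentially the same route as the paper, whose entire proof reads that the lemma ``is a consequence of Proposition~\ref{Darstellungsformel_fuer_die_Differenz_der_Resolventen} and Fubini's theorem''; you have simply made explicit the fibrewise (``swap $t$ and $\lambda$'') action of $W$ and the Fubini/decay argument that the paper leaves implicit. The exponential-decay estimate you cite via~\eqref{eq:est.h} is exactly the right ingredient to justify the interchange, so nothing is missing.
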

\begin{proof}
  This is a consequence of
  Proposition~\ref{Darstellungsformel_fuer_die_Differenz_der_Resolventen}
  and Fubini's theorem.
\end{proof}
In particular, Lemma~\ref{lem:res.diff} shows that
\begin{align*}
	W^{-1} 
	\big[ (H - z)^{-1} - (\HDir - z)^{-1} \big]
	W
	= \int_{\sigma}^{\oplus}
	T_{\lambda} \dd \mu(\lambda),
\end{align*}
where
\begin{align}	\label{Vektorwertiger_Hankel-Operator_aus_Res.diff.}
  T_{\lambda}( \psi \otimes \phi_\lambda ) 
  = \frac{\im}{\sqrt{z - \lambda}} 
  \int_{\mathbb{R}_{+}} 
  \psi(\tau) 
  \exp \! \big( \im \sqrt{z-\lambda} \, (\bullet+\tau) \big)
  \dd\tau 
  \otimes \phi_\lambda
\end{align}
for all $ \psi \in \mathsf C_{\mathrm c}(\mathbb{R}_{+}) $, all
$\phi_\lambda \in \mathcal{G}(\lambda)$ and $\dd \mu$-almost all
$\lambda \in \sigma$.  We write $ T = \int_{\sigma}^{\oplus}
T_{\lambda} \dd \mu(\lambda) $.
\begin{remark}
  The operator $ T_{\lambda} $ defined in~\eqref{Vektorwertiger_Hankel-Operator_aus_Res.diff.}  is a
  vector-valued Hankel-type integral operator, as the first factor is
  an integral operator on $\mathsf L_2(\R_+)$ with kernel depending
  only on the sum $t+\tau$ of the variables $t,\tau \in \R_+$.
\end{remark}

\subsection{Proof of
  Theorem~\ref{Resolventendifferenz_unitaer_aequivalent}. Step 2}
\label{subsec:main_thm_Step_2}

For the rest of this subsection we assume that 
\begin{equation*}
  z \in (-\infty, \min \sigma).
\end{equation*}
It is then clear that $ \lambda - z > 0 $ and hence $ \im \sqrt{z -
  \lambda} = -(\lambda - z)^{1/2}$ for all $ \lambda \in \sigma $.
Therefore,
\begin{align*}
  U_{\lambda}\colon \mathsf L_2\big( \mathbb{R}_{+}, \mathcal{G}(\lambda) \big)
	\rightarrow \mathsf L_2\big( \mathbb{R}_{+}, \mathcal{G}(\lambda) \big),
	\quad
	(U_{\lambda}f)(t) 
	= (\lambda - z)^{1/4} f \big( (\lambda - z)^{1/2} \, t \big),
\end{align*}
is a unitary operator for every fixed $ \lambda $ outside a set of 
$ \dd \mu $-measure $0$, and 
the operator-valued function 
$ U = \int_{\sigma}^{\oplus} U_{\lambda} \dd \mu(\lambda) $
defines a unitary operator on
$ \int_{\sigma}^{\oplus} 
\mathsf L_2\big( \mathbb{R}_{+}, \mathcal{G}(\lambda) \big)
\dd\mu(\lambda) $.   Note that
$U$ depends on $z$, but we will suppress the dependency of $z$ in the
notation (as we already did for $T$ in the previous subsection).

Let us now perform the scaling transformation of $ T $ with respect to
$ U $.  As both operators are fibred with respect to the direct
integral over $\lambda$, we have $U^{-1}TU=\int_\sigma^\oplus
U_\lambda^{-1}T_\lambda U_\lambda \dd \mu(\lambda)$.  Moreover, for
$\psi \in \mathsf C_{\mathrm c}(\mathbb{R}_{+}) $ and $
\varphi_\lambda \in \mathcal{G}(\lambda) $ we calculate
\begin{align*}	
  (U_\lambda^{-1} T_\lambda U_\lambda)(\psi \otimes \varphi_\lambda)
  = \int_{\R_+}
  \exp \! \big( {-} (\bullet+\tau) \big) \psi(\tau) 
	\dd\tau 
	\otimes  \frac{\varphi_\lambda}{\lambda-z}
\end{align*}
for $ \dd \mu $-almost all $ \lambda \in \sigma$.

Let $\map {\Psi_0} {\R_+} \R$ be the function defined by $\Psi_{0}(t)
= \exp(-t)$. It is well known that the difference of the resolvents
(at $ -1 $) of the Neumann and Dirichlet Laplacians on the semi-axis
is given by
\begin{align}	\label{Res.diff._Halbgerade}
	\bigg[ \Big( {-} \frac{\dd^{2}}{\dd t^{2}} \Big)^{\Neu} + 1 \bigg]^{-1} 
	- \bigg[ \Big( {-}\frac{\dd^{2}}{\dd t^{2}} \Big)^{\Dir} + 1 \bigg]^{-1} 
	= \langle \bullet, \Psi_{0} \rangle_{\mathsf L_2(\mathbb{R}_{+})} \Psi_{0}.
\end{align}
Since $ L $ is the multiplication operator by the independent variable 
on $ \mathcal{G} $ one has
\begin{align*}
	(L - z)^{-1} \varphi 
	= \int_{\sigma}^{\oplus}
	\frac{\varphi(\lambda)}{\lambda - z}
	\dd \mu(\lambda).
\end{align*}
We have thus shown 
Theorem~\ref{Resolventendifferenz_unitaer_aequivalent}.	\qed

\subsection{The spectral properties of the difference of the 
resolvents}
\label{subsec:deduce_corollary_from_main_thm}

Theorem~\ref{Resolventendifferenz_unitaer_aequivalent} allows us to
determine the spectral properties of the difference of the resolvents
as stated in
Corollary~\ref{corollary_from_main_thm__spectral.properties}.
\begin{proof}[Proof of
  Corollary~\ref{corollary_from_main_thm__spectral.properties}]
	Denote by $ B_{\Psi_{0}} $ the self-adjoint rank one operator 
	on $ \mathsf L_2(\mathbb{R}_{+}) $ from equation~\eqref{Res.diff._Halbgerade},
	where $ \Psi_{0}(t) = \exp(-t) $. 
	By Theorem~\ref{Resolventendifferenz_unitaer_aequivalent} 
	we know that 
	$ (H - z)^{-1} - (\HDir - z)^{-1} $ on $ \mathcal{H} $ 
	is unitarily equivalent to 
	\begin{align} \label{reformulation_of_res.diff.}
		B_{\Psi_{0}}
		\otimes (L-z)^{-1} 
		\quad \text{on } \mathsf L_2(\mathbb{R}_{+}) \otimes \mathcal{G}.
	\end{align}
	Denote by $ \{ \Psi_{0} \}^{\bot} $ the orthogonal complement of 
	$ \C \Psi_{0} $ in $ \mathsf L_2(\mathbb{R}_{+}) $.
	Then the operator from~\eqref{reformulation_of_res.diff.}
	is unitarily equivalent to the 
	block diagonal operator
	\begin{align*}
		0 \oplus \Big[ \frac{1}{2} (L-z)^{-1} \Big] 
		\quad \text{on }
		\big[ \{ \Psi_{0} \}^{\bot} \otimes \mathcal{G} \big]
		\oplus \mathcal{G},
	\end{align*}
	because the range of $ B_{\Psi_{0}} $ is spanned by $ \Psi_{0} $
	and $ \langle \Psi_{0}, \Psi_{0} 
	\rangle_{\mathsf L_2(\mathbb{R}_{+})} = \frac{1}{2} $.
	Now, standard arguments from spectral theory (see, e.\,g., 
	\cite[Chapter 7]{Birman_Solomyak}) complete the proof.
\end{proof}

%
\section{The difference of the spectral projections}
\label{sec:main.thm.2}
%

In this section we will establish
Theorem~\ref{thm:main1}~\eqref{thm:main1__Teil(b)} and
Theorem~\ref{Ergebnis_I_Differenz_der_Spektralprojektoren}.  In
Subsection~\ref{subsec:main_thm_1.1} we use Proposition
\ref{Allgemeine_Fakten_zum_Spektrum_von_H} to compute the difference $
\1_{(-\infty,\alpha)}(H) - \1_{(-\infty, \alpha)}(\HDir) $ of the
spectral projections for every $ \alpha > 0 $. Then, we will establish
Theorem~\ref{thm:main1}~\eqref{thm:main1__Teil(b)}.  In
Subsection~\ref{subsec:main_thm_2__Step_1} we will change the order of
evaluation with respect to the variables $ t \in \mathbb{R}_{+} $ and
$ \lambda \in \sigma $ in the formula for $ \1_{(-\infty,\alpha)}(H) -
\1_{(-\infty, \alpha)}(\HDir) $.  We will obtain, for $ \dd \mu
$-almost all $ \lambda $ in $ \sigma $, a vector-valued Hankel-type
integral operator.  In Subsection~\ref{subsec:main_thm_2__Step_2} we
will see that these vector-valued Hankel-type integral operators are
closely related to the Hankel integral operator from Krein's
example~\cite{Krein} discussed above. After this observation we will
be able to complete the proof of
Theorem~\ref{Ergebnis_I_Differenz_der_Spektralprojektoren}, using the
above mentioned result from Kostrykin and
Makarov~\cite{Kostrykin_Makarov}.

\subsection{Proof of
  Theorem~\ref{thm:main1}~\eqref{thm:main1__Teil(b)}}
\label{subsec:main_thm_1.1}
 
Since $ H \geq 0 $ and $ \HDir \geq 0 $ both have a purely 
absolutely continuous spectrum we may, 
without loss of generality, assume that $ \alpha > 0 $. 
By Proposition~\ref{Allgemeine_Fakten_zum_Spektrum_von_H}~\eqref{Weimann-Formel}, formula~\eqref{eq:spec.proj.class1},
and Fubini's theorem
we obtain that
\begin{align*}
	&\big\langle
	\big( \1_{(-\infty, \alpha)}(H) - \1_{(-\infty, \alpha)}(\HDir) \big)
	(\psi \otimes \varphi),
	\xi \otimes \eta 
	\big\rangle_{\mathcal{H}}	\\
	&= \frac{2}{\pi} 
	\int_{\mathbb{R}_{+}} \dd t
	\int_{\sigma} \dd\mu(\lambda)
	\int_{\mathbb{R}_{+}} \dd\tau ~
	\langle 
	\psi(\tau) \1_{[0, \alpha)}(\lambda) \varphi(\lambda), 
	\xi(t) \eta(\lambda) \rangle_{\mathcal{G}(\lambda)}
	\frac{\sin \! \big( (\alpha - \lambda)^{1/2} \, (t+\tau) \big)}
	{t + \tau}
\end{align*}
for all $ \psi, \xi \in \mathsf C_{\mathrm c}(\mathbb{R}_{+}) $ and 
all $ \varphi \in \mathcal{G} $, $ \eta \in \dom(L) $.
\begin{remark}
  Alternatively, this can also be computed using
  Proposition~\ref{Darstellungsformel_fuer_die_Differenz_der_Resolventen}
  and Stone's formula for spectral projections.
\end{remark}

Further one proves for all $ t $ in $ \mathbb{R}_{+} $ that
\begin{align}	\label{Formel_fuer_Proj.diff._in_H}
	h(t)
	= \frac{2}{\pi} 
	\int_{\sigma}^{\oplus}
	\int_{\mathbb{R}_{+}} 
	\psi(\tau)
	\1_{[0, \alpha)}(\lambda)
	\frac{\sin \! \big( (\alpha - \lambda)^{1/2} (t+\tau) \big)}
	{t + \tau}
        \dd\tau
	\varphi(\lambda)
        \dd\mu(\lambda) 
        \in \mathcal{G}.
\end{align}
By the dominated convergence theorem we obtain that 
$ \mathbb{R}_{+} \ni t \mapsto h(t) \in \mathcal{G} $ is 
continuous. Consequently, $ h $ is measurable and we compute
\begin{align*}
	\| h \|_{\mathcal{H}} 
	\leq 
	\| \varphi \|_{\mathcal{G}} \,
	\frac{1}{\tau_{0}^{1/2}} \,
	\max_{\tau \in \mathbb{R}_{+}} |\psi(\tau)|  \,
	\int_{\supp(\psi)} \dd\tau
	< \infty,
\end{align*}
where $ \tau_{0} = \min \! \big( \supp(\psi) \big) > 0 $.
We have shown that
\begin{align*}
	\big\langle
	\big( \1_{(-\infty, \alpha)}(H) - \1_{(-\infty, \alpha)}(\HDir) \big)
	(\psi \otimes \varphi),
	\xi \otimes \eta 
	\big\rangle_{\mathcal{H}}
	= \langle h, \xi \otimes \eta \rangle_{\mathcal{H}}
\end{align*}
for all $ \xi \in \mathsf C_{\mathrm c}(\mathbb{R}_{+}) $ and 
all $ \eta \in \dom(L) $. 
Since $ \mathsf C_{\mathrm c}(\mathbb{R}_{+}) \odot \dom(L) $ is 
dense in $ \mathcal{H} $ we have established the following result:
\begin{lemma}	\label{Spektr.proj.diff.__L_Mult.op.}
	If $ \alpha > 0 $ then
	$	\big( \1_{(-\infty, \alpha)}(H) - \1_{(-\infty, \alpha)}(\HDir) \big)
		(\psi \otimes \varphi)
		= h $ 
	for all $ \psi \in \mathsf C_{\mathrm c}(\mathbb{R}_{+}) $ and 
	all $ \varphi \in \mathcal{G} $,
	where $ h \in \mathcal{H} $ is defined as in~\eqref{Formel_fuer_Proj.diff._in_H} above.
\end{lemma}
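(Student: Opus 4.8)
The plan is to prove the identity weakly: I would pair the self-adjoint operator $\1_{(-\infty,\alpha)}(H)-\1_{(-\infty,\alpha)}(\HDir)$ against elementary tensors $\xi\otimes\eta$ with $\xi\in\mathsf C_{\mathrm c}(\R_+)$ and $\eta\in\dom(L)$, and then recognise the outcome as $\langle h,\xi\otimes\eta\rangle_{\mathcal H}$, where $h$ is the element defined in~\eqref{Formel_fuer_Proj.diff._in_H}. The structural point that makes everything work is that $H$ and $\HDir$ carry the \emph{same} second factor $L$ and differ only in their first factor, the Neumann versus the Dirichlet Laplacian on the half-line; this places the computation squarely within the scope of the convolution-type formula of Proposition~\ref{Allgemeine_Fakten_zum_Spektrum_von_H}~\eqref{Weimann-Formel}.

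First I would apply that formula to each of $\1_{(-\infty,\alpha)}(H)$ and $\1_{(-\infty,\alpha)}(\HDir)$ and subtract. Since the subtraction leaves the scalar spectral measure $\dd\langle\1_\lambda(L)\varphi,\eta\rangle$ of the common factor untouched, the difference collapses into a single integral over $\lambda$ whose integrand is the difference of the one-dimensional half-line projections $\1_{(-\infty,\alpha-\lambda)}$ for the Neumann and the Dirichlet Laplacian, tested between $\psi$ and $\xi$. In the direct-integral picture the $L$-measure becomes integration against $\dd\mu(\lambda)$, paired with $\langle\varphi(\lambda),\eta(\lambda)\rangle_{\mathcal G(\lambda)}$.

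I would then bring in the explicit kernel from Krein's scalar example, formula~\eqref{eq:spec.proj.class1}. Because the half-line Laplacians have purely absolutely continuous spectrum filling $[0,\infty)$ with no atoms, the decreasing spectral map $B\mapsto(B+1)^{-1}$ together with $\1_{(\beta,\infty)}=\id-\1_{(-\infty,\beta)}$ identifies the Neumann-minus-Dirichlet projection difference at threshold $\alpha-\lambda$ with the resolvent-projection difference~\eqref{eq:spec.proj.class1} at the matching parameter. This substitutes the Hankel kernel $\tfrac{2}{\pi}\sin\!\big((\alpha-\lambda)^{1/2}(t+\tau)\big)/(t+\tau)$ and forces the integrand to vanish once $\alpha-\lambda\le 0$, which is exactly the cutoff $\1_{[0,\alpha)}(\lambda)$ (note $\sigma\subset[0,\infty)$ since $L\ge 0$). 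A single application of Fubini's theorem then reorders the $t$-, $\tau$-, and $\lambda$-integrations into the triple integral displayed above.

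The step I expect to require the most care is not any of these manipulations but the verification that the candidate $h$ from~\eqref{Formel_fuer_Proj.diff._in_H} actually lies in $\mathcal H$, so that the weak identity does define the intended vector. Here the hypothesis $\psi\in\mathsf C_{\mathrm c}(\R_+)$ is indispensable: with $\tau_0=\min(\supp\psi)>0$, the a priori singular factor $1/(t+\tau)$ never exceeds $1/\tau_0$ on the support of $\psi$, which at once yields the continuity of $t\mapsto h(t)$ by dominated convergence and the finite norm bound recorded above, and also justifies the Fubini step. Once $h\in\mathcal H$ is secured and the weak identity is established on all of $\mathsf C_{\mathrm c}(\R_+)\odot\dom(L)$, density of this subspace in $\mathcal H$ upgrades it to the claimed operator identity.
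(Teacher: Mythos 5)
Your proposal is correct and follows essentially the same route as the paper's proof: the weak pairing against $\xi\otimes\eta\in\mathsf C_{\mathrm c}(\R_+)\odot\dom(L)$, the convolution-type formula of Proposition~\ref{Allgemeine_Fakten_zum_Spektrum_von_H}~\eqref{Weimann-Formel} combined with Krein's kernel~\eqref{eq:spec.proj.class1} and Fubini, the verification that $h\in\mathcal H$ via continuity (dominated convergence) and the $\tau_0=\min(\supp\psi)>0$ norm bound, and finally density of $\mathsf C_{\mathrm c}(\R_+)\odot\dom(L)$. The only difference is presentational: you spell out the spectral-mapping step (no atoms, $\vartheta=1/(\beta+1)$, sign flip between operator and resolvent projections) that the paper leaves implicit in its citation of~\eqref{eq:spec.proj.class1} and records separately in~\eqref{Trafo_Spektralmasse}.
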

We can now prove Theorem~\ref{thm:main1}~\eqref{thm:main1__Teil(b)}.
\begin{proof}[Proof of Theorem~\ref{thm:main1}~\eqref{thm:main1__Teil(b)}]
	The result from 
	Lemma~\ref{Spektr.proj.diff.__L_Mult.op.}
	carries over to the situation when $ L $ is not 
	necessarily a multiplication operator, using 
	Theorem~\ref{Theorem_von_Neumann_direct_integral} and 
	the functional calculus:
	\begin{align*}
		&\Big( \big[ \1_{(-\infty,\alpha(\vartheta))}(H) 
		- \1_{(-\infty,\alpha(\vartheta))}(\HDir) \big]
		(\psi \otimes \varphi) \Big)(t)	\\
		&= \frac{2}{\pi} \int_{\mathbb{R}_{+}}
		\psi(\tau)
		\1_{[0,\alpha(\vartheta))}(L) 
		\frac{\sin \! \big( (\alpha(\vartheta) - L)^{1/2} (t+\tau) \big)}{t+\tau}
		\varphi
		\dd \tau
	\end{align*}
	for all $ \psi \in \mathsf C_{\mathrm c}(\mathbb{R}_{+}) $ and 
	all $ \varphi \in \mathcal{G} $, where $ 0<\vartheta<1 $ 
	and $ \alpha(\vartheta) = \frac{1}{\vartheta}-1 $.
	Last, observe that
	\begin{align}	\label{Trafo_Spektralmasse}
		\1_{(-\infty,\alpha(\vartheta))}(H) 
		- \1_{(-\infty,\alpha(\vartheta))}(\HDir)
		= \1_{(-\infty,\vartheta)}(A_0)
		- \1_{(-\infty,\vartheta)}(A_1).
	\end{align}
	Now the proof of Theorem~\ref{thm:main1}~\eqref{thm:main1__Teil(b)}
	is complete.
\end{proof}

\subsection{Proof of
  Theorem~\ref{Ergebnis_I_Differenz_der_Spektralprojektoren}. Step 1}
\label{subsec:main_thm_2__Step_1}

Analogously to Lemma~\ref{lem:res.diff} one shows:
\begin{lemma}	\label{lem:spec.proj.diff}
	Let $ \alpha > 0 $ and let
	$ \psi \in \mathsf C_{\mathrm c}(\mathbb{R}_{+}) $, $ \varphi \in \mathcal{G} $.
	Then one has
	\begin{align*}
		&\Big( W 
		\big( \1_{(-\infty, \alpha)}(H) - \1_{(-\infty, \alpha)}(\HDir) \big)
		(\psi \otimes \varphi)
		\Big)(\lambda)	\\
		&= 
		\frac{2}{\pi} 
		\int_{\mathbb{R}_{+}} 
		\1_{[0, \alpha)}(\lambda)
		\psi(\tau)
		\frac{\sin \! \big( (\alpha - \lambda)^{1/2} (\bullet+\tau) \big)}
		{\bullet + \tau}
		\dd\tau
                \otimes \varphi(\lambda)
	\end{align*}
	for $ \dd \mu $-almost all $ \lambda $ in $ \sigma $, where $
        W \colon \mathcal{H} \rightarrow \int_{\sigma}^{\oplus}
        \mathsf L_2( \mathbb{R}_{+})\otimes \mathcal{G}(\lambda)
        \dd\mu(\lambda) $ is the unitary operator defined in
        Subsection~\ref{subsec:main_thm_Step_1} above.
\end{lemma}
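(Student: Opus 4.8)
The plan is to reproduce the proof of Lemma~\ref{lem:res.diff} line by line, using the explicit formula for the difference of the spectral projections established in Lemma~\ref{Spektr.proj.diff.__L_Mult.op.} in place of the resolvent formula of Proposition~\ref{Darstellungsformel_fuer_die_Differenz_der_Resolventen}. The only genuinely new ingredient is again the Fubini-type identification underlying the unitary $W$ from Subsection~\ref{subsec:main_thm_Step_1}.

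First I would fix $\alpha > 0$, $\psi \in \mathsf C_{\mathrm c}(\mathbb{R}_{+})$ and $\varphi \in \mathcal{G}$, and invoke Lemma~\ref{Spektr.proj.diff.__L_Mult.op.}, which identifies $\big(\1_{(-\infty,\alpha)}(H) - \1_{(-\infty,\alpha)}(\HDir)\big)(\psi \otimes \varphi) \in \mathcal{H}$ with the function $h$ defined in~\eqref{Formel_fuer_Proj.diff._in_H}. For each $t \in \mathbb{R}_{+}$ the vector $h(t) \in \mathcal{G}$ is the direct integral over $\lambda \in \sigma$ whose $\lambda$-component equals
\[
\frac{2}{\pi} \int_{\mathbb{R}_{+}} \psi(\tau) \1_{[0,\alpha)}(\lambda) \frac{\sin\!\big((\alpha-\lambda)^{1/2}(t+\tau)\big)}{t+\tau} \dd\tau \; \varphi(\lambda) \in \mathcal{G}(\lambda).
\]
Next I would apply the unitary $W$, which interchanges the outer variable $t \in \mathbb{R}_{+}$ with the direct-integral variable $\lambda \in \sigma$. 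By its very definition on elementary tensors, the fibre $(Wh)(\lambda) \in \mathsf L_2(\mathbb{R}_{+}) \otimes \mathcal{G}(\lambda)$ is obtained by reading off the $\lambda$-component of $h(t)$ and regarding it as a function of $t$; this produces precisely the asserted formula.

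The hard part will be justifying this fibre-wise reading, i.e.\ that $\lambda \mapsto (Wh)(\lambda)$, defined by the integrand above, is a genuine element of $\int_{\sigma}^{\oplus} \mathsf L_2(\mathbb{R}_{+}) \otimes \mathcal{G}(\lambda) \dd\mu(\lambda)$ and agrees with $Wh$ for $\dd\mu$-almost all $\lambda$. As in Lemma~\ref{lem:res.diff}, this is where Fubini's theorem enters: one must verify that for $\dd\mu$-almost every $\lambda$ the map $t \mapsto h(t)(\lambda)$ lies in $\mathsf L_2(\mathbb{R}_{+}, \mathcal{G}(\lambda))$, that $\lambda \mapsto (Wh)(\lambda)$ is measurable, and that the $t$- and $\lambda$-integrations may be interchanged. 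The needed integrability is already at hand from the norm estimate established just below~\eqref{Formel_fuer_Proj.diff._in_H}, where the double integral defining $\|h\|_{\mathcal{H}}$ is controlled using $\tau_0 = \min(\supp(\psi)) > 0$. This uniform bound legitimises Fubini's theorem and hence the identification, so that $(Wh)(\lambda) = \frac{2}{\pi}\int_{\mathbb{R}_{+}} \1_{[0,\alpha)}(\lambda) \psi(\tau) \frac{\sin((\alpha-\lambda)^{1/2}(\bullet+\tau))}{\bullet+\tau} \dd\tau \otimes \varphi(\lambda)$ holds for $\dd\mu$-almost all $\lambda$, completing the proof.
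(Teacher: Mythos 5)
Your proposal is correct and is essentially the paper's own argument: the paper proves this lemma ``analogously to Lemma~\ref{lem:res.diff}'', whose proof consists precisely of combining the explicit formula for the operator on elementary tensors (here Lemma~\ref{Spektr.proj.diff.__L_Mult.op.}, there Proposition~\ref{Darstellungsformel_fuer_die_Differenz_der_Resolventen}) with Fubini's theorem to interchange the roles of $t$ and $\lambda$ under the unitary $W$. Your identification of where the integrability needed for Fubini comes from (the norm estimate following~\eqref{Formel_fuer_Proj.diff._in_H}) matches the intended justification.
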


\subsection{Proof of
  Theorem~\ref{Ergebnis_I_Differenz_der_Spektralprojektoren}. Step 2}
\label{subsec:main_thm_2__Step_2}

Lemma~\ref{lem:spec.proj.diff} shows that if $ \mu \big( \sigma \cap
[0,\alpha) \big) = 0 $ then 
$ \1_{(-\infty, \alpha)}(H) - \1_{(-\infty, \alpha)}(\HDir) = 0 $.  
Let us now consider the more interesting
case when $ \mu \big( \sigma \cap [0,\alpha) \big) > 0 $.
Lemma~\ref{lem:spec.proj.diff} implies in this case that
$	\1_{(-\infty, \alpha)}(H) - \1_{(-\infty, \alpha)}(\HDir) $
is unitarily equivalent to the block diagonal operator
\begin{align*}
	\Big[ \int_{\sigma \cap [0, \alpha)}^{\oplus}
	\widetilde{T}_{\lambda} \dd\mu(\lambda) \Big]
	\oplus 0
	\quad \text{on }
	\Big[ 
	\int_{\sigma \cap [0,\alpha)}^{\oplus} 
	\mathsf L_2(\mathbb{R}_{+}, \mathcal{G}(\lambda))
	\dd\mu(\lambda)
	\Big] 
	\oplus
	\Big[ 
	\int_{\sigma \cap [\alpha,\infty)}^{\oplus} 
	\mathsf L_2(\mathbb{R}_{+}, \mathcal{G}(\lambda))
	\dd\mu(\lambda)
	\Big],
\end{align*}
where for every fixed 
$ \lambda \in \sigma \cap [0, \alpha) $ outside a set of 
$ \dd \mu $-measure $0$
\begin{align*}
	\widetilde{T}_{\lambda}( \psi \otimes \phi_\lambda ) 
	= \frac{2}{\pi}  
	\int_{\mathbb{R}_{+}} 
	\psi(\tau) 
	\frac{\sin \! \big( (\alpha - \lambda)^{1/2} \, (\bullet+\tau) \big)}
	{\bullet + \tau}
	\dd\tau 
	\otimes \phi_\lambda
\end{align*}
for all $ \psi \in \mathsf C_{\mathrm c}(\mathbb{R}_{+}) $ and 
all vectors $ \phi_\lambda \in \mathcal{G}(\lambda) $. 
We will write $ \widetilde{T} 
= \int_{\sigma \cap [0, \alpha)}^{\oplus}
\widetilde{T}_{\lambda} \dd\mu(\lambda) $.

Next we define the unitary operator
\begin{equation*}
  \widetilde{U} 
  = \int_{\sigma \cap [0, \alpha)}^{\oplus} \widetilde{U}_{\lambda} 
      \dd\mu(\lambda)
  \quadtext{on}
  \int_{\sigma \cap [0,\alpha)}^{\oplus} 
     \mathsf L_2(\mathbb{R}_{+}, \mathcal{G}(\lambda)) \dd\mu(\lambda),
\end{equation*}
where $\widetilde{U}_{\lambda}$ is the unitary scaling operator on $
\mathsf L_2\big( \mathbb{R}_{+}, \mathcal{G}(\lambda) \big) $ given by
\begin{equation*}
  (\widetilde{U}_{\lambda}f)(t) 
  = (\alpha - \lambda)^{1/4} f \big( (\alpha - \lambda)^{1/2} \, t \big)
\end{equation*}
for $\dd \mu$-almost all $\lambda \in \sigma \cap [0,\alpha)$.  Note
that $\wt U$ depends also on $\alpha$, but as before for $U$, we
suppress this dependency.
Again, both operators $\wt U$ and $\wt T$ are fibred with respect to the direct
integral over $\lambda$, hence 
$\wt U^{-1}\wt T \wt U=\int_{\sigma \cap [0, \alpha)}^\oplus
\wt U_\lambda^{-1} \wt T_\lambda \wt U_\lambda \dd \mu(\lambda)$.  Moreover, for
$\psi \in \mathsf C_{\mathrm c}(\mathbb{R}_{+}) $ and $
\varphi_\lambda \in \mathcal{G}(\lambda) $ we compute
\begin{align*}
  (\wt U^{-1}_\lambda \wt T_\lambda \wt U_\lambda)(\psi \otimes \varphi_\lambda) 
  &= \frac{2}{\pi} 
  \int_{\mathbb{R}_{+}} 
  \frac{\sin(\bullet+\tau)}{\bullet+\tau} \psi(\tau)
  \dd\tau
  \otimes \varphi_\lambda\\
  &= K \psi \otimes \varphi_\lambda
\end{align*}
for $ \dd \mu $-almost all $ \lambda \in \sigma \cap [0, \alpha)$,
where $K$ is given by
\begin{align*}	
    (K \psi)(t) = 
    \frac{2}{\pi} 
    \int_{\mathbb{R}_{+}} 
    \frac{\sin(t+\tau)}{t+\tau} \psi(\tau)
    \dd\tau,
    \quad 
    \psi \in \mathsf C_{\mathrm c}(\mathbb{R}_{+}).
\end{align*}
In~\cite{Kostrykin_Makarov}, Kostrykin and Makarov have shown that $ K
$ has a simple and purely absolutely continuous spectrum filling in
the interval $ [-1,1] $.  Consequently, the operator
\begin{align*}
	\widetilde{U}^{-1} \, \widetilde{T} \, \widetilde{U} 
	\quad	\text{on }
	\int_{\sigma \cap [0,\alpha)}^{\oplus} 
	\mathsf L_2(\mathbb{R}_{+}) \otimes \mathcal{G}(\lambda)
	\dd\mu(\lambda)
\end{align*}
is unitarily equivalent to the multiplication operator by the
independent variable on
\begin{equation*}
  \mathsf L_2 \Big( [-1,1], \int_{\sigma \cap
  [0,\alpha)}^{\oplus} \mathcal{G}(\lambda) \dd\mu(\lambda) \Big).
\end{equation*}
Now, an application of the transformation rule for spectral measures
(see~\eqref{Trafo_Spektralmasse} above) completes to proof 
of Theorem~\ref{Ergebnis_I_Differenz_der_Spektralprojektoren}. \qed

\begin{remark}
  \label{rem:diff.is.hankel}
  Note that $K$ defined above is the Hankel integral operator on $
  \mathsf L_2(\mathbb{R}_{+})$ from Krein's example
	in the case when $ \vartheta = 1/2 $,
	see (\ref{eq:spec.proj.class1}) above.
\end{remark}

%
%

\providecommand{\bysame}{\leavevmode\hbox to3em{\hrulefill}\thinspace}


\end{document}